\title{A Generalization of the Persistent Laplacian to Simplicial Maps} 
\author{Aziz Burak G\"ulen}{Department of Mathematics, The Ohio State University, Columbus, OH, USA \and \url{https://sites.google.com/view/azizburakgulen/} }{guelen.1@osu.edu}{}{}
\author{Facundo M\'emoli}{Department of Mathematics and Department of Computer Science and Engineering, The Ohio State University, Columbus, OH, USA \and \url{https://facundo-memoli.org/} } {memoli@math.osu.edu}{}{FM is partially supported by BSF 2020124, NSF CCF-1740761, NSF CCF-1839358, and NSF IIS-1901360.}
\author{Zhengchao Wan}{Halıcıoğlu Data Science Institute, University of California San Diego, San Diego, CA, USA \and \url{https://zhengchaow.github.io/} } {zcwan@ucsd,edu}{}{ZW is partially supported by NSF CCF-2112665, and NSF CCF-2217033.}
\author{Yusu Wang}{Halıcıoğlu Data Science Institute, University of California San Diego, San Diego, CA, USA \and \url{http://yusu.belkin-wang.org/} } {yusuwang@ucsd,edu}{}{YW is partially supported by NSF CCF-2112665, and NSF CCF-2217033.}
\authorrunning{A.\,B. G\"ulen, F. M\'emoli, Z. Wan and Y. Wang} 
\keywords{combinatorial Laplacian, persistent Laplacian, Schur complement, persistent homology, persistent Betti number} 
\DeclareMathOperator{\Ima}{Im}
\DeclareMathOperator{\proj}{proj}
\newcommand{\Sch}{\mathbf{Sch}}
\newcommand{\sgn}{\mathrm{sgn}}
\newcommand{\ChUp}[2]{\mathfrak{C}_{q+1}^{#1\leftarrow #2}}
\newcommand{\ChDown}[2]{\mathfrak{C}_{q-1}^{#1\rightarrow #2}}
\newcommand{\R}        	{{\mathbb R}}
\newcommand{\N}        	{{\mathbb N}}
\newsavebox{\@brx}
\newcommand{\llangle}[1][]{\savebox{\@brx}{\(\m@th{#1\langle}\)}%
  \mathopen{\copy\@brx\mkern2mu\kern-0.9\wd\@brx\usebox{\@brx}}}
\newcommand{\rrangle}[1][]{\savebox{\@brx}{\(\m@th{#1\rangle}\)}%
  \mathclose{\copy\@brx\mkern2mu\kern-0.9\wd\@brx\usebox{\@brx}}}
\begin{document}

\maketitle

\begin{abstract}
    The (combinatorial) graph Laplacian is a fundamental object in the analysis of, and optimization on, graphs. Via a topological view, this operator can be extended to a simplicial complex $K$ and therefore offers a way to perform ``signal processing" on $p$-(co)chains of $K$.  Recently, the concept of \emph{persistent Laplacian} was proposed and studied for a pair of  simplicial complexes $K\hookrightarrow L$ connected by an inclusion relation, further broadening the use of Laplace-based operators. 
    
    In this paper, we significantly expand the scope of the persistent Laplacian by generalizing it to a pair of weighted simplicial complexes connected by a weight preserving simplicial map $f: K \to L$. Such a simplicial map setting arises frequently, e.g., when relating a coarsened simplicial representation with an original representation, or the case when the two simplicial complexes are spanned by different point sets, i.e. cases in which it does not hold that $K\subset L$.  However, the simplicial map setting is much more challenging than the inclusion setting since the underlying algebraic structure is much more complicated.

We present a natural generalization of the persistent Laplacian to the simplicial setting. To shed insight on the structure behind it, as well as to develop an algorithm to compute it, 
we exploit the relationship between the persistent Laplacian and the Schur complement of a matrix. A critical step is to view the Schur complement as a functorial way of restricting a self-adjoint positive semi-definite operator to a given subspace. As a consequence of this relation, we prove that the $q$th persistent Betti number of the simplicial map $f: K\to L$ equals  the nullity of the $q$th persistent Laplacian $\Delta_q^{K,L}$. We then propose an algorithm for finding the matrix representation of $\Delta_q^{K,L}$ which in turn yields  a fundamentally different algorithm for computing the $q$th persistent Betti number of a simplicial map. Finally, we study the persistent Laplacian on simplicial towers under weight-preserving simplicial maps and establish monotonicity results for their eigenvalues. 

\end{abstract}

\section{Introduction}

The graph Laplacian is an operator on the space of functions defined on the vertex set of a graph. It is one of the main tools in the analysis of and optimization on graphs. For example, the spectral properties of the graph Laplacian are extensively used in spectral clustering and other applications~\cite{specGraphTh, spielman2019spectral,LeeGT12,ng2002sca,uvon} and  for  efficiently solving  systems of equations~ \cite{KMP12,livne2012lean,spielman2004nearly,Vishnoi13}.

As opposed to the traditional way of defining the graph Laplacian as the difference of the degree matrix and the adjacency matrix, it can also be defined from an algebraic topology perspective by considering the boundary operators and specific inner products defined on simplicial chain groups~\cite{specGraphTh}. This point of view permits extending the graph Laplacian to operators on higher dimensional chain groups. Namely, this leads to the $q$th combinatorial Laplacian $\Delta_q^K$ on the $q$th chain group of a given simplicial complex $K$, in which the case $q=0$ corresponds to the standard graph Laplacian \cite{eckmann1944harmonische,duval2002shifted,goldberg2002combinatorial,horak2013spectra}.  One  fundamental property of the $q$th combinatorial Laplacian is that the $q$th Betti number of $K$ equals the nullity of $\Delta_q^K$.

By adopting the algebraic topology view, the $q$th persistent Laplacian $\Delta_q^{K,L}$ was independently introduced in~\cite{pdf, wang2020persistent} for a pair of simplicial complexes $K\hookrightarrow L$ connected by an inclusion. The theoretical properties of $\Delta_q^{K,L}$ and algorithms to compute it have been extensively studied in~\cite{memoli2022persistent}. One of these properties is that the nullity of $\Delta_q^{K,L}$ equals the \emph{persistent Betti number} of the inclusion $K\hookrightarrow L$, which is a generalization of the corresponding property of the combinatorial Laplacian mentioned above.

\begin{figure}[htbp]
    \centering
    \includegraphics[height=4cm]{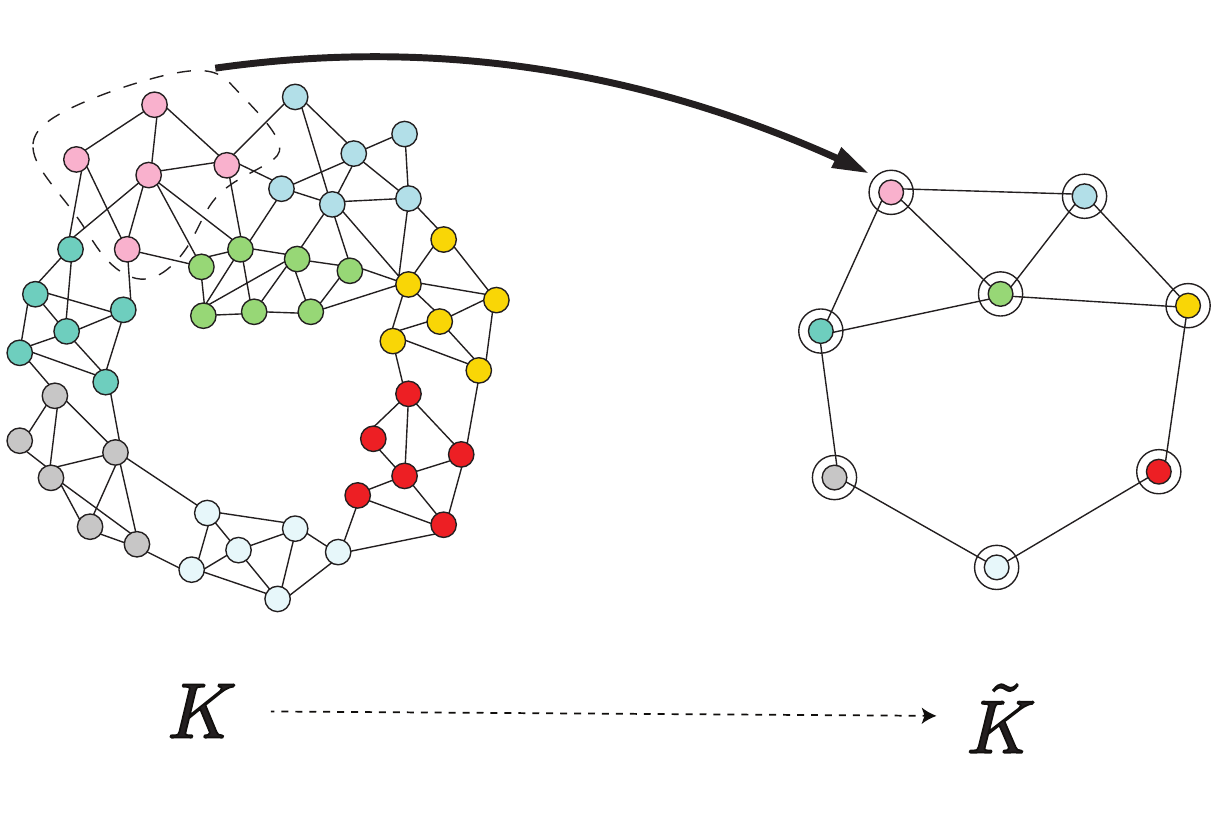}
    \caption{The 1-dimensional simplicial complex, i.e. graph, $K$ is coarsened to produce the one on the right $\tilde{K}$. Vertices of the same color are ``collapsed" to a ``supernode" in  $\tilde{K}$. This vertex map induces a simplicial map at the simplicial complex level.}
    \label{fig:coarsening}
\end{figure}
Although the persistent Laplacian for a pair $K\hookrightarrow L$ has been used in some applications~\cite{chen2022persistent, hoekzema2022multiscale, hozumi2022ccp}, the requirement that the complexes should be connected by an inclusion is restrictive and limits its applicability. Consider the scenario when we have two  simplicial complexes $K\stackrel{\iota}{\hookrightarrow} L$ related by an inclusion so that their sizes are prohibitively large. Instead of tackling the direct computation of the persistent Betti numbers induced by the simplicial inclusion $\iota$, practical needs may suggest that instead one \emph{sparsifies} the complexes $K$ and $L$ to obtain (smaller) complexes and in the process one obtains a \emph{simplicial map} connecting them (see Figure \ref{fig:coarsening} for an illustration of the coarsening procedure in the case of graphs). This is the scenario described for example in \cite{dey2013graph,dey2012computing} and can be expressed through the following  diagram where vertical arrows indicate the sparsification process: 

\begin{center}
\begin{tikzcd}

K \arrow[rr, "\iota"] \arrow[d,dashed] && L\arrow[d,dashed]\\
\tilde{K} \arrow[rr, "\varphi_\iota"] && \tilde{L}
  
\end{tikzcd}
\end{center}

This therefore motivates the study of persistent Laplacian for the setting where our input spaces (simplicial complexes) are connected by more general maps beyond inclusion, in particular, simplicial maps. This is the setting that we will study in this paper.

\paragraph*{Contributions}

We introduce a generalized version of the persistent Laplacian for weight preserving simplicial maps $f: K\to L$ between two weighted simplicial complexes $K$ and $L$. Our work utilizes ideas from several different disciplines, including operator theory, spectral graph theory, and persistent homology. In more detail:

\begin{itemize}
    \item In~\autoref{sec:basics-and-defns}, we provide two equivalent definitions of the (up and down) persistent Laplacian for a weight preserving simplicial map $f: K\to L$. While one definition is more useful when proving  some properties of the persistent Laplacian, the other definition provides a cleaner interpretation of the matrix representation of the persistent Laplacian. We also present one of the main properties of the persistent Laplacian,~\autoref{persBetti}, which establishes that the nullity of $\Delta_q^{f:K\to L}$ equals the persistent Betti number of the (arbitrary) simplicial map $f: K\to L$, analogous to the nonpersistent and the inclusion-based persistent cases.
    
    \item In~\autoref{sec:schur}, we show that the Schur complement of a principal submatrix in a matrix can be viewed as a (Schur) restriction of a self-adjoint positive semi-definite operator to a subspace. In order to accomplish this, we find it useful to utilize some concepts and language from category theory. Viewing the set of self-adjoint positive semi-definite operators as the poset category of the Loewner order\footnote{For two self-adjoint positive semi-definite operators $L_1$ and $L_2$, the Loewner order is given by: $L_1 \succeq L_2$ if and only if $L_1 - L_2$ is positive semi-definite.}, we prove that Schur restriction is a right adjoint to the functor that extends an operator on a subspace to the whole space by composing with projection onto that subspace. We present our core observation about the Schur restriction,~\autoref{thm:schur-persistent}, which states that up and down persistent Laplacians can be obtained via Schur restrictions of the combinatorial up and down Laplacians.
    
    \item In~\autoref{sec:matrix-and-algo}, we present an algorithm to find a matrix representation of the persistent Laplacian for simplicial maps by the relation between up/down persistent Laplacians and the Schur restriction. We also analyze its complexity. 
    
    \item In~\autoref{sec:mono}, we study the eigenvalues of up and down persistent Laplacians and prove monotonicity of these eigenvalues under the composition of simplicial maps.
\end{itemize}

\section{Persistent Laplacian for simplicial maps}
\label{sec:basics-and-defns}

\subsection{Basics}\label{sec:basics}
\subparagraph{Simplicial complexes and chain groups} An (abstract) simplicial complex $K$ over a finite ordered vertex set $V$ is a non-empty collection of non-empty subsets of $V$ with the property that for every $\sigma \in K$, if $\tau\subseteq \sigma$, then $\tau \in K$. An element $\sigma \in K$ is called a $q$-simplex if the cardinality of $\sigma$ is $q+1$. We denote the set of $q$-simplices by $S_q^K$.

An oriented simplex, denoted $[\sigma]$, is a simplex $\sigma \in K$ whose vertices are ordered. As we start with an ordered vertex set, we always assume that the orientation on the simplices are inherited from the order on the vertex set. Let $\mathcal{S}_q^K := \{ [\sigma]: \, \sigma \in K \}$.

The $q$th chain group $C_q^K := C_q(K, \R)$ of $K$ is the vector space over $\R$ with basis $\mathcal{S}_q^K$. Let $n_q^K := |\mathcal{S}_q^K| = \dim_\R (C_q^K)$.

The boundary operator $\partial_q^K : C_q^K \to C_{q-1}^K$ is defined by
\begin{equation}
    \partial_q^K ([v_0,...,v_q]) := \sum_{i=0}^{q}(-1)^i[v_0,...,\hat{v}_i,...,v_q]
\end{equation}
for every $q$-simplex $\sigma = [v_0,...,v_q]\in \mathcal{S}_q^K$, where $[v_0,...,\hat{v}_i,...,v_q]$ denotes the omission of the $i$th vertex, and extended linearly to $C_q^K$.

A weight function on a simplicial complex $K$ is any positive function $w^K : K\to (0, \infty)$. A simplicial complex is called weighted if it is endowed with a weight function. For every $q\in \N$, let $w_q^K := w^K|_{S_q^K}$, the restriction of $w^K$ onto $S_q^K$. We define an inner product $\langle \cdot, \cdot \rangle_{w_q^K}$ on $C_q^K$ as follows:
\begin{equation}
    \langle [\sigma], [\sigma'] \rangle_{w_q^K} := \delta_{\sigma \sigma'} \cdot (w_q^K(\sigma))^{-1} 
\end{equation}
for all $\sigma$, $\sigma' \in S_q^K$, where $\delta_{\sigma \sigma'}$ is the Kronecker delta.

\subparagraph{Cochain groups as dual of chain groups} For clarification of some of our results/notations later, we also introduce certain concepts related to cochain groups. The cochain group $C_K^q$ of $K$ is the linear space consisting of all linear maps defined on $C_q^K$, i.e., $C_K^q:=\mathrm{hom}(C_q^K,\R)$. The cochain group $C_K^q$ also possesses a natural basis $\mathcal{S}_K^q:=\{\chi_{[\sigma]}\,|\,[\sigma]\in \mathcal{S}_q^K\}$, where $\chi_{[\sigma]}$ is the linear map such that $\chi_{[\sigma]}([\tau])=\delta_{[\sigma],[\tau]}$ for any $[\tau]\in \mathcal{S}_q^K$. 
We define an inner product $\llangle\cdot,\cdot\rrangle_{w_q^K}$ on $C_K^q$ as follows: for any $\chi_{[\sigma]},\chi_{[\sigma']}\in \mathcal{S}_K^q$,
\begin{equation}
    \llangle \chi_{[\sigma]},\chi_{[\sigma']} \rrangle_{w_q^K} := \delta_{\sigma \sigma'} \cdot w_q^K(\sigma).
\end{equation}

Then, the map $j_q^K:C_q^K\rightarrow C_K^q$ sending a chain $c$ to the linear map $\langle c,\cdot\rangle_{w_q^K}$ is an isometry w.r.t. the inner products of the two spaces. Moreover, the following diagram commutes:
\begin{center}
\begin{tikzcd}

C_q^K \arrow[rr, "(\partial_{q+1}^K)^*"] \arrow[d,"j_q^K"'] && C_{q+1}^K\arrow[d,"j_{q+1}^K"]\\
C_K^q \arrow[rr, "\delta^q_K"]  && C_K^{q+1}
  
\end{tikzcd}
\end{center}
In this way, the adjoint $(\partial_{q+1}^K)^*$ of the boundary map $\partial_{q+1}^K$ can be identified with the coboundary map $\delta^q_K$. Similarly, $(\delta^{q}_K)^*$ can be identified with $\partial_{q+1}^K$. In the paper, we adopt the notation $L^*$ to denote the adjoint of a linear map $L$ between two inner product spaces.

\subparagraph{Combinatorial Laplacian for simplicial complexes} Given a weighted simplicial complex $K$, one defines the $q$th combinatorial Laplacian $\Delta_q^K$ as follows:
\[\Delta_q^K:={\partial_{q+1}^K\circ(\partial_{q+1}^K)^\ast}+{(\partial_{q}^K)^\ast\circ\partial_{q}^K} : C_q^K \to C_q^K,\]
where $\Delta_{q,\mathrm{up}}^K:=\partial_{q+1}^K\circ(\partial_{q+1}^K)^\ast$ is called the $q$th up Laplacian and $\Delta_{q,\mathrm{down}}^K:=(\partial_{q}^K)^\ast\circ\partial_{q}^K$ is called the $q$th down Laplacian.
Thanks to the renowned theorem by Eckmann \cite{eckmann1944harmonische}, the combinatorial Laplacian is able to capture topological information of underlying simplicial complexes: the nullity of $\Delta_q^K$ agrees with the $q$th Betti number of $K$.

\subparagraph{Simplicial maps}
A simplicial map from a simplicial complex $K$ into a simplicial complex $L$ is a function from the vertex set of $K$ to vertex set of $L$, $f: S_0^K \to S_0^L $, such that for every $\sigma \in K$, we have that $f(\sigma) \in L$. For every $q\in \N$, a simplicial map $f:K\to L$ induces a linear map $f_q : C_q^K \to C_q^L$ by the formula
\begin{equation}
    f_q([v_0,...,v_q]) =
    \begin{cases}
        [f(v_0),...,f(v_q)] & \text{if } f(v_0),...,f(v_q) \text{ are distinct} \\
        0 & \text{otherwise}
    \end{cases}
\end{equation}
for every oriented $q$-simplex $[v_0,...,v_q] \in \mathcal{S}_q^K$. 
The linear map $f_q$ does not have to preserve the orientation. That is, we could have that $f_q([\sigma]) = -[\tau]$ for some $[\sigma] \in \mathcal{S}_q^K$ and $[\tau] \in \mathcal{S}_q^L$. In this case, we write $\sgn_{f_q}(\sigma) = -1$. We write $\sgn_{f_q}(\sigma) = 1$ if $f_q([\sigma]) = [\tau]$. 

\begin{definition}
A simplicial map $f: K \to L$ between two weighted simplicial complexes is called \emph{weight preserving} if for every $[\tau] \in \Ima(f_q)$\, we have that 
\begin{equation}
    w_q^L(\tau) = \sum\limits_{\substack{\sigma \in S_q^K, \\ f_q([\sigma]) = \pm [\tau]}} w_q^K(\sigma). 
\end{equation}
\end{definition}

\subsection{The Persistent Laplacian for simplicial maps}\label{sec:perslap for simplicial}
The persistent Laplacian, whose definition we now recall,  was initially defined only for inclusion maps.
Given an inclusion map $\iota:K\hookrightarrow L$ between two simplicial complexes, we have the following commutative diagram 
\begin{center}
\begin{tikzcd}
                                         &                                                                                          & C_q^K \arrow[rr,red, "\partial_{q}^K"', bend right] \arrow[dd,gray, dashed, hook] \arrow[ld,blue, "{(\partial_{q+1}^{L,K})^*}", bend left] &  & C_{q-1}^K \arrow[ll,red, "(\partial_{q}^K)^*"', bend right] \\
                                         & {C_{q+1}^{L,K}} \arrow[ld,gray, dashed, hook] \arrow[ru, "{\partial_{q+1}^{L,K}}",blue, bend left] &                                                                                                                              &  &                                                         \\
C_{q+1}^L \arrow[rr, "\partial_{q+1}^L"] &                                                                                          & C_q^L                                                                                                                        &  &                                                        
\end{tikzcd}
\end{center}
Here, $C^{L,K}_{q+1}$ denotes the subspace $C^{L,K}_{q+1}:=\{c\in C_{q+1}^L\,|\,\partial_{q+1}^L(c)\in C_q^K\}$ of $C_{q+1}^L$, and $\partial_{q+1}^{L,K}$ denotes the restriction of $\partial_{q+1}^L$ to $C^{L,K}_{q+1}$, i.e., $\partial_{q+1}^{L,K}:=\partial_{q+1}^L|_{C^{L,K}_{q+1}}:C^{L,K}_{q+1}\rightarrow C_q^K$. Then, the $q$th up persistent Laplacian is defined as $\Delta_{q,\mathrm{up}}^{K,L}:=\partial_{q+1}^{L,K}\circ (\partial_{q+1}^{L,K})^*$, the $q$th down Laplacian is $\Delta_{q,\mathrm{down}}^K=(\partial_{q}^K)^\ast\circ\partial_{q}^K$, and the $q$th persistent Laplacian is defined as
\begin{equation}\label{eq:plap-inc}
\Delta_q^{K,L}:=\Delta_{q,\mathrm{up}}^{K,L}+\Delta_{q,\mathrm{down}}^{K} : C_q^K\to C_q^K.
\end{equation}
Similarly to the case of the combinatorial Laplacian, the nullity of $\Delta_q^{K,L}$ recovers the persistent Betti number of the inclusion map $\iota : K\hookrightarrow L$ (cf. \cite[Theorem 2.7]{memoli2022persistent}).

\subparagraph{Re-examination of the persistent Laplacian for inclusion maps}
Notice that (a) the definition of $C_{q+1}^{L,K}$ seems  to depend on the fact that the map $\iota$ is an inclusion and (b) the down Laplacian part $\Delta_{q,\mathrm{down}}^{K}$ does, a priori,  not exhibit any dependence on $L$. However, the apparent dependence/independence mentioned in (a) and (b), respectively, are illusory.
We now re-examine the definition above  in order to motivate our extension of the notion of  persistent Laplacian for  simplicial maps.

First of all, we note that the expression $\partial_{q+1}^L(c)\in C_q^K$ in the definition of $C_{q+1}^{L,K}$ above is somewhat misleading. In fact, we are implicitly identifying  $C_q^K$ with its image $ \iota_q(C_q^K)$ under the the inclusion map $\iota_q:C_q^K\rightarrow C_q^L$ induced by $\iota$. With this consideration, we rewrite $C^{L,K}_{q+1}$ in a more precise way:
\begin{equation}\label{eq:up inclusion}
    C^{L,K}_{q+1}=\left\{c\in C_{q+1}^L\,|\,\partial_{q+1}^L(c)\in \iota_q(C_q^K)\right\}.
\end{equation}

Expression (\ref{eq:up inclusion}) makes it clear that a certain set $\iota_q(C_q^K)$  is used in order to define the up Laplacian in the case of inclusions. This motivates us to consider the following dual construction which can be used to re-define the down Laplacian also in the case of inclusions
\begin{equation}\label{eq:down inclusion}
    \mathcal{C}^{K,L}_{q-1}:=\left\{c\in C_{q-1}^K\,|\,(\partial_{q}^K)^*(c)\in (\iota_q)^*(C_q^L)\right\}.
\end{equation}
As $\iota_q$ is injective, $(\iota_q)^*(C_q^L)=C_q^K$, and thus $\mathcal{C}^{K,L}_{q-1}=C_{q-1}^K$. 
In this way, we see that  using inclusion maps leads to concealing certain ``persistence-like'' structure inherent to the down part of the persistent Laplacian. An advantage of the formulation of the persistent Laplacian for general simplicial maps is that it will explicitly reveal this hidden structure.

Finally, we observe that for any $c\in C^{L,K}_{q+1}$, in fact, $\partial_{q+1}^L(c)\in \iota_q(\ker(\partial_q^K))\subseteq \iota_q(C_q^K)$. This is simply due to the fact that $\partial_q^K\circ \partial_{q+1}^L(c)=\partial_q^L\circ \partial_{q+1}^L(c)=0$. Here, we implicitly identify $\partial_{q+1}^L(c)$ with $\iota_q^{-1}(\partial_{q+1}^L(c))$ where $\iota^{-1}_q$ is the inverse of $\iota_q$ on its image. Hence, we have the following more refined expression for $C^{L,K}_{q+1}$:
\begin{equation}\label{eq:up inclusion alt}
    C^{L,K}_{q+1}=\left\{c\in C_{q+1}^L\,|\,\partial_{q+1}^L(c)\in \iota_q(\ker(\partial_q^K))\right\}.
\end{equation}

Integrating all these observations leads to our definition for the persistent Laplacian for general simplicial maps which we describe next.
\subparagraph{Persistent Laplacian for simplicial maps}
Suppose that we have a weight preserving simplicial map $f : K \to L$ and let $q\in \N$. Consider the subspaces
\[
\ChUp{L}{K} := \left\{ c\in C_{q+1}^L \mid \partial_{q+1}^L (c) \in f_q(\ker(\partial_q^K)) \right\},
\]
\[
\ChDown{K}{L} := \left\{ c\in C_{q-1}^K \mid (\partial_q^K)^*(c) \in \ker(f_q)^\perp \right\}.
\]
Note that $\ChUp{L}{K} \subseteq C_{q+1}^L$ and $\ChDown{K}{L}\subseteq C_{q-1}^K$. Moreover, these spaces are natural generalizations of $C^{L,K}_{q+1}$ and $\mathcal{C}^{K,L}_{q-1}$, respectively (cf. Equation \eqref{eq:up inclusion} and Equation \eqref{eq:down inclusion}), as $\ker(\iota_q)^\perp = C_q^K= (\iota_q)^*(C_q^L)$.

Let $\partial_{q+1}^{L,K}$ denote\footnote{The notation $\partial_{q+1}^{L,K}$ has been used before as the restriction of $\partial_{q+1}^L$ to $C_{q+1}^{L,K}$. As $\ChUp{L}{K}$ generalizes the space $C_{q+1}^{L,K}$, we stick to the same notation $\partial_{q+1}^{L,K}$ to denote the restriction of $\partial_{q+1}^L$ to $\ChUp{L}{K}$} the restriction of $\partial_{q+1}^L$ to $\ChUp{L}{K}$. 
Let $\delta_{q-1}^{K,L}$ denote\footnote{Recall that $(\partial_q^K)^*$ can be identified with the coboundary map $\delta^{q-1}_K$ in a sense specified in \autoref{sec:basics}, hence we use $\delta_{q-1}^{K,L}$ to denote this restriction.} the restriction of $(\partial_q^K)^*$ to $\ChDown{K}{L}$. Furthermore, we let $\hat{f}_q : \ker(f_q)^\perp \to \Ima(f_q)$ denote the restriction of $f_q$ onto $\ker(f_q)^\perp$. Before we proceed, we comment on some properties of $\hat{f}_q$ and $\ker(f_q)^\perp$.
We note that $\ker(f_q)^\perp$ possesses a canonical basis as follows. For every $[\tau] \in \Ima(f_q)$, we define
\[
    c_{q}^{\tau,f} := \sum\limits_{\substack{\sigma \in S_q^K, \\ f_q([\sigma]) = \pm [\tau]}} \, \sgn_{f_q}(\sigma)\,w_q^K(\sigma)\, [\sigma]\,\,\ \in C_q^K.
\]
When the map $f$ is clear from the content, we will simply write $c_q^{\tau}$. We let $ \mathcal{J} := \{ c_q^{\tau}\, |\, [\tau] \in \Ima(f_q)\}$.
\begin{lemma}\label{lem:isometry}
The set $\mathcal{J}$ is an orthogonal basis for $\ker(f_q)^\perp$. Moreover, the map $\hat{f}_q : \ker(f_q)^\perp \to \Ima(f_q)$ is an isometry between inner product spaces.
\end{lemma}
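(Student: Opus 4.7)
The plan is to exploit the natural orthogonal decomposition of $C_q^K$ induced by the fibers of the vertex map $f$. For each $[\tau]\in \Ima(f_q)$ let $F_\tau:=\{\sigma\in S_q^K\mid f_q([\sigma])=\pm[\tau]\}$ and let $F_0:=\{\sigma\in S_q^K \mid f_q([\sigma])=0\}$. Since the basis $\mathcal{S}_q^K$ is orthogonal with respect to $\langle\cdot,\cdot\rangle_{w_q^K}$, the subspaces $V_\tau:=\mathrm{span}\{[\sigma]\mid \sigma\in F_\tau\}$ (for $\tau\in \Ima(f_q)$) together with $V_0:=\mathrm{span}\{[\sigma]\mid \sigma\in F_0\}$ form an orthogonal direct sum decomposition of $C_q^K$. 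Clearly $V_0\subseteq \ker(f_q)$, and $f_q$ maps $V_\tau$ onto the one-dimensional subspace $\mathrm{span}\{[\tau]\}$, so $\ker(f_q\vert_{V_\tau})$ has codimension one in $V_\tau$. Consequently,
\[
\ker(f_q)=V_0\oplus \bigoplus_{[\tau]\in \Ima(f_q)}\ker(f_q\vert_{V_\tau}),
\qquad
\ker(f_q)^\perp=\bigoplus_{[\tau]\in \Ima(f_q)}\bigl(\ker(f_q\vert_{V_\tau})\bigr)^{\perp_{V_\tau}}.
\]

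Next, I would identify $c_q^\tau$ with the generator of the one-dimensional space $(\ker(f_q\vert_{V_\tau}))^{\perp_{V_\tau}}$. Pick an arbitrary $x=\sum_{\sigma\in F_\tau}a_\sigma[\sigma]\in V_\tau$; then $x\in \ker(f_q\vert_{V_\tau})$ iff $\sum_{\sigma\in F_\tau}\sgn_{f_q}(\sigma)\,a_\sigma=0$. A direct computation using the definition of $\langle\cdot,\cdot\rangle_{w_q^K}$ gives
\[
\langle c_q^\tau, x\rangle_{w_q^K}
=\sum_{\sigma\in F_\tau}\sgn_{f_q}(\sigma)\,w_q^K(\sigma)\,a_\sigma\,(w_q^K(\sigma))^{-1}
=\sum_{\sigma\in F_\tau}\sgn_{f_q}(\sigma)\,a_\sigma,
\]
which vanishes precisely on $\ker(f_q\vert_{V_\tau})$. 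Thus $c_q^\tau$ lies in the one-dimensional orthogonal complement, and since the $V_\tau$ are mutually orthogonal, $\mathcal{J}$ is an orthogonal basis of $\ker(f_q)^\perp$.

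For the isometry claim, I would compute both sides directly and invoke the weight-preserving hypothesis. On the one hand,
\[
\|c_q^\tau\|_{w_q^K}^{2}
=\sum_{\sigma\in F_\tau}(w_q^K(\sigma))^{2}(w_q^K(\sigma))^{-1}
=\sum_{\sigma\in F_\tau}w_q^K(\sigma)
=w_q^L(\tau),
\]
using the weight-preserving condition. On the other hand, $\hat{f}_q(c_q^\tau)=\sum_{\sigma\in F_\tau}\sgn_{f_q}(\sigma)^{2}\,w_q^K(\sigma)\,[\tau]=w_q^L(\tau)[\tau]$, so $\|\hat{f}_q(c_q^\tau)\|_{w_q^L}^{2}=(w_q^L(\tau))^{2}(w_q^L(\tau))^{-1}=w_q^L(\tau)$. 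Since $\hat{f}_q$ sends the orthogonal basis $\mathcal{J}$ to the orthogonal family $\{w_q^L(\tau)[\tau]\}_{[\tau]\in\Ima(f_q)}$ preserving all norms, and since this image family spans $\Ima(f_q)$, $\hat{f}_q$ is an isometry.

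The main subtlety is not in any single step but in packaging the bookkeeping cleanly: one must keep track of the sign function $\sgn_{f_q}$ in the definition of $c_q^\tau$ (so that $\hat{f}_q(c_q^\tau)$ is a positive multiple of $[\tau]$ independent of orientation choices) and invoke the weight-preserving hypothesis at exactly the right point to match $\sum_{\sigma\in F_\tau}w_q^K(\sigma)$ with $w_q^L(\tau)$. Once the fiber-wise orthogonal decomposition is set up, everything else reduces to these two bookkeeping facts.
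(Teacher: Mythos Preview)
Your proof is correct and follows essentially the same route as the paper's: both arguments exploit the fiber structure of $f_q$ to show each $c_q^\tau$ is orthogonal to $\ker(f_q)$, and both verify the isometry by computing $\|c_q^\tau\|^2=\sum_{\sigma\in F_\tau}w_q^K(\sigma)=w_q^L(\tau)=\|\hat f_q(c_q^\tau)\|^2$ via the weight-preserving hypothesis. The only organisational difference is that you set up the orthogonal decomposition $C_q^K=V_0\oplus\bigoplus_\tau V_\tau$ upfront and read off $\ker(f_q)^\perp$ directly as the sum of one-dimensional pieces, whereas the paper first shows $\mathrm{span}(\mathcal J)\subseteq\ker(f_q)^\perp$ and then closes the gap by a dimension count ($\dim\mathrm{span}(\mathcal J)=\dim\Ima(f_q)=\dim\ker(f_q)^\perp$); your packaging is marginally cleaner but the content is the same.
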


Now, we consider the following diagram which contains all the notations we defined above:
\begin{center}
\begin{tikzcd}
                                                                                &  & C_q^K                                                                                                                   &  & C_{q-1}^K                                                                        \\
                                                                                &  & \ker(f_q)^\perp \arrow[u, hook] \arrow[dd, "\hat{f}_q"', bend right] \arrow[rr, red, "{(\delta_{q-1}^{K,L})^*}", bend left]          &  & {\ChDown{K}{L}} \arrow[u, hook] \arrow[ll, red, "{\delta_{q-1}^{K,L}}"', bend left] \\
                                                                                &  &                                                                                                                         &  &                                                                                  \\
{\ChUp{L}{K}} \arrow[rr, blue, "{\partial_{q+1}^{L,K}}", bend left] \arrow[d, hook] &  & \Ima(f_q) \arrow[uu, "(\hat{f}_q)^{-1}"', bend right] \arrow[ll, blue, "{(\partial_{q+1}^{L,K})^*}", bend left] \arrow[d, hook] &  &                                                                                  \\
C_{q+1}^L                                                                       &  & C_q^L                                                                                                                   &  &                                                                                 
\end{tikzcd}\label{tik:laplacian}
\end{center}
We define up and down persistent Laplacian respectively as:
\begin{align}
    \Delta_{q,\mathrm{up}}^{K\stackrel{f}{\to} L} :=& \partial_{q+1}^{L,K}\circ (\partial_{q+1}^{L,K})^*:\Ima(f_q)\rightarrow \Ima(f_q), \label{defn:upLap} \\
    \Delta_{q,\mathrm{down}}^{K\stackrel{f}{\to} L} :=& \hat{f}_q \circ \delta_{q-1}^{K,L} \circ (\delta_{q-1}^{K,L})^* \circ \hat{f}_q^{-1}:\Ima(f_q)\rightarrow \Ima(f_q). \label{defn:downLap}
\end{align}

As $\hat{f}_q$ preserves inner product, we have that $\hat{f}_q^{-1} = \hat{f}_q^*$. Thus, both up and down persistent Laplacians are self-adjoint and non-negative operators on $\Ima(f_q)$. We then define the \emph{$q$-th persistent Laplacian} $\Delta_q^{K\stackrel{f}{\to} L} : \Ima(f_q) \to \Ima(f_q)$ by:
\begin{align}\label{eqn:perLdef}
\Delta_q^{K\stackrel{f}{\to} L} &:= \Delta_{q,\mathrm{down}}^{K\stackrel{f}{\to} L} + \Delta_{q,\mathrm{up}}^{K\stackrel{f}{\to} L}. 
\end{align}

When the map $f:K\to L$ is clear, we will write $\Delta_q^{K,L}$ for the persistent Laplacian.

\begin{remark}\label{rmk:image}
By slightly abuse of notation, we also let $f$ denote the simplicial map $f:K\rightarrow\Ima{(f)}$. Then, it follows from the definition of the down persistent Laplacian that $\Delta_{q,\mathrm{down}}^{K\stackrel{f}{\to} L} = \Delta_{q,\mathrm{down}}^{K\stackrel{f}{\to} \Ima{(f)}}$.
\end{remark}

\begin{remark}
When considering an inclusion $\iota :K\to L$, one can see that $\ChDown{K}{L} = \mathcal{C}_{q-1}^{K,L} = C_q^K$, $\ChUp{L}{K} = C_{q+1}^{L,K}$ and $\iota_q : C_q^K \hookrightarrow C_q^L$ is an isometric embedding. Thus, our definition of persistent Laplacian generalizes the inclusion-based persistent Laplacian
\end{remark}

\begin{remark}[An alternative definition of the persistent Laplacian]\label{rem:alternative-defn}
The weight preserving property of the simplicial map guarantees that $\ker(f_q)^\perp$ and $\Ima(f_q)$ are isometric, see~\autoref{lem:isometry}. Thus, we could have, equivalently, defined the (up and down) persistent Laplacian as an operator on $\ker(f_q)^\perp$ instead of $\Ima(f_q)$ as follows:
\begin{align*}
    \Delta_{q,\mathrm{up}}^{K\stackrel{f}{\to} L} :=& \hat{f}_q^{-1} \circ \partial_{q+1}^{L,K}\circ (\partial_{q+1}^{L,K})^* \circ \hat{f}_q:\ker(f_q)^\perp \rightarrow \ker(f_q)^\perp, \\
    \Delta_{q,\mathrm{down}}^{K\stackrel{f}{\to} L} :=& \delta_{q-1}^{K,L} \circ (\delta_{q-1}^{K,L})^* :\ker(f_q)^\perp \rightarrow \ker(f_q)^\perp.
\end{align*}
Note that when we have an inclusion $\iota :K \hookrightarrow L$, the (up/down) persistent Laplacian in~\cite{pdf,memoli2022persistent,wang2020persistent} is defined on $C_q^K$, which is the same as $\ker(\iota_q)^\perp$ and isometrically isomorphic to $\Ima(\iota_q)$.

The two different definitions have their own advantages. 
Seeing the persistent Laplacian as an operator on $\Ima(f_q)$ increases the interpretability of this operator as the matrix representation can be computed using the canonical basis of $\Ima(f_q)$. 
On the other hand, seeing the persistent Laplacian on $\ker(f_q)^\perp$ helps us understanding some of its properties more easily. 
For example, see proof of~\autoref{thm:down-mono}.
\end{remark}

\begin{remark}[Cochain formulation of the persistent Laplacian]\label{rmk:cochain laplacian}
Our generalization of the persistent Laplacian reveals a way to define a persistent Laplacian using the cochain spaces via dualization. 
If $f:K \to L$ is a simplicial map, then it induces a linear map in the cochain spaces $f^q : C^q_L \to C^q_K$, where $C_K^q = \mathrm{hom}(C_q^K, \R)$. Then, one can use the following subspaces in order to define a persistent Laplacian using cochains:
\[\mathfrak{C}_{L\leftarrow K}^{q+1} := \{ c\in C^{q+1}_L \mid (\delta_{q}^L)^*(c) \in (f^q)^*(\ker(\delta_{q-1}^K)^*)\},\]
\[\mathfrak{C}_{K\rightarrow L}^{q-1} := \{c\in C^{q-1}_K \mid \delta^K_{q-1}(c) \in \ker((f^q)^*)^\perp \}.\]
It turns out that the operator defined via these spaces are the same as the persistent Laplacian defined using chains; see~\autoref{sec:cochain laplacian} for more details.
\end{remark}

Let $\beta_q^{K\stackrel{f}{\to} L}$ denote the rank of the linear map $H_q(K)\to H_q(L)$ induced by $f$. $\beta_q^{K\stackrel{f}{\to} L}$ is called the persistent Betti number of the map $f:K\to L$. When the map $f:K\to L$ is clear from the content, we simply write $\beta_q^{K,L}$. With the machinery developed above together with several key observations that relates the (up and down) persistent Laplacians and Schur restriction of an operator, we have the following result.

\begin{theorem}[Persistent Laplacians recover persistent Betti numbers]\label{persBetti}
Let $f: K\to L$ be a simplicial map and $q \in \N$. Then, $\beta_q^{K,L} = \mathrm{nullity}(\Delta_q^{K,L})$.
\end{theorem}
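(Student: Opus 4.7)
The plan is to exploit the Hodge-type identity for sums of positive semi-definite operators and then compute each kernel directly. Since both $\Delta_{q,\mathrm{up}}^{K,L}$ and $\Delta_{q,\mathrm{down}}^{K,L}$ are self-adjoint and positive semi-definite on $\Ima(f_q)$, we have
\[\ker\bigl(\Delta_q^{K,L}\bigr)\;=\;\ker\bigl(\Delta_{q,\mathrm{up}}^{K,L}\bigr)\,\cap\,\ker\bigl(\Delta_{q,\mathrm{down}}^{K,L}\bigr),\]
so it suffices to identify each factor and evaluate the dimension of the intersection. Write $Z_q^K:=\ker\partial_q^K$ and $B_q^L:=\Ima\partial_{q+1}^L$.

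For the up part, the standard identity $\ker(AA^*)=\Ima(A)^\perp$ applied to $A=\partial_{q+1}^{L,K}$, together with the set-theoretic identity $T(T^{-1}(S))=S\cap\Ima(T)$ applied to $T=\partial_{q+1}^L$ and $S=f_q(Z_q^K)$, unpacks the definition of $\ChUp{L}{K}$ to yield $\Ima(\partial_{q+1}^{L,K})=B_q^L\cap f_q(Z_q^K)$, and hence
\[\ker\bigl(\Delta_{q,\mathrm{up}}^{K,L}\bigr)\;=\;\bigl(B_q^L\cap f_q(Z_q^K)\bigr)^{\perp}\quad\text{inside }\Ima(f_q).\]
For the down part I would pass to the formulation on $\ker(f_q)^\perp$ from \autoref{rem:alternative-defn}, where $\Delta_{q,\mathrm{down}}^{K,L}=\delta_{q-1}^{K,L}\circ(\delta_{q-1}^{K,L})^*$. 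The same argument gives the kernel there as $\bigl(\Ima((\partial_q^K)^*)\cap\ker(f_q)^\perp\bigr)^{\perp}$ (complement inside $\ker(f_q)^\perp$), and then $(S\cap T)^\perp=S^\perp+T^\perp$ together with $\Ima((\partial_q^K)^*)^\perp=Z_q^K$ reduce this to the orthogonal projection $\pi_{\ker(f_q)^\perp}(Z_q^K)$. Pushing through the isometry $\hat{f}_q$ then gives $\ker(\Delta_{q,\mathrm{down}}^{K,L})=f_q(Z_q^K)$ inside $\Ima(f_q)$, because $\hat{f}_q$ coincides with $f_q$ on $\ker(f_q)^\perp$ and $f_q$ annihilates $\ker(f_q)$.

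Combining the two, and using $B_q^L\cap f_q(Z_q^K)\subseteq f_q(Z_q^K)\subseteq\Ima(f_q)$, the intersection
\[f_q(Z_q^K)\,\cap\,\bigl(B_q^L\cap f_q(Z_q^K)\bigr)^{\perp}\]
is the orthogonal complement of $B_q^L\cap f_q(Z_q^K)$ inside $f_q(Z_q^K)$, of dimension $\dim f_q(Z_q^K)-\dim(B_q^L\cap f_q(Z_q^K))=\dim\bigl((f_q(Z_q^K)+B_q^L)/B_q^L\bigr)$. This last number is precisely the rank of the map $H_q(K)\to H_q(L)$ induced by $f$, i.e., $\beta_q^{K,L}$. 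The main obstacle will be in the down part: one must be careful to take orthogonal complements inside $\ker(f_q)^\perp$ rather than in the ambient $C_q^K$, and then verify that $\hat{f}_q$ maps $\pi_{\ker(f_q)^\perp}(Z_q^K)$ onto all of $f_q(Z_q^K)$, even though $Z_q^K$ need not lie inside $\ker(f_q)^\perp$.
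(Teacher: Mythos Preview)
Your argument is correct. The overall architecture matches the paper's: split $\ker(\Delta_q^{K,L})$ as the intersection of the up and down kernels, identify the down kernel with $f_q(Z_q^K)$ and the up kernel with $(B_q^L\cap f_q(Z_q^K))^\perp$, and read off the dimension as that of $f_q(Z_q^K)/(B_q^L\cap f_q(Z_q^K))\cong\Ima((f_q)_\#)$.

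Where you diverge from the paper is in how you compute the down kernel. The paper invokes its Schur machinery: it first identifies $\delta_{q-1}^{K,L}\circ(\delta_{q-1}^{K,L})^*$ with $\Sch(\Delta_{q,\mathrm{down}}^K,\ker(f_q)^\perp)$ via \autoref{prop:schur-restriction}, and then appeals to the general fact $\ker(\Sch(L,W))=\proj_W(\ker L)$ (\autoref{schurKernel}) to obtain $\pi_{\ker(f_q)^\perp}(Z_q^K)$. You bypass this entirely, computing $\Ima(\delta_{q-1}^{K,L})=\Ima((\partial_q^K)^*)\cap\ker(f_q)^\perp$ directly from the definition of $\ChDown{K}{L}$ and then taking complements. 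Your route is more elementary and self-contained (it needs nothing from \autoref{sec:schur}), while the paper's route showcases that \autoref{persBetti} is a consequence of the Schur-restriction viewpoint that drives the rest of the paper. Both reach the same intermediate identity $\ker(\Delta_{q,\mathrm{down}}^{K,L})=f_q(\ker\partial_q^K)$, which the paper also records separately as \autoref{lem:fq ker = ker lap}.
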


\begin{remark}
As the persistent Betti number does not depend on the weights on the simplicial complexes, weights can be assigned to $K$ and $L$ such that the simplicial map $f:K \to L$ is weight preserving. Then, one can use the persistent Laplacian to compute the persistent Betti number of $f$.
\end{remark}

\section{Schur Restriction and the Persistent Laplacian}\label{sec:schur}
One of the main contributions in~\cite{memoli2022persistent} is a characterization of the up persistent Laplacian for inclusion maps via the so-called Schur complement. In this section, we establish that this characterization also holds in our setting of simplicial maps.

Let $M \in \R^{n \times n}$ be a block matrix $M = 
\begin{pmatrix}
A & B \\
C & D
\end{pmatrix}$
 where $A\in \R^{(n-d) \times (n-d)}$ and $D \in \R^{d \times d}$. The \emph{(generalized) Schur complement} of $D$ in $M$ is $M/D := A - B D^{\dagger} C$, where $D^{\dagger}$ is the Moore-Penrose generalized inverse of $D$.

A linear operator $L: V\to V$ on a finite dimensional real inner product space $V$ is called \emph{positive semi-definite} if $\langle L(v), v \rangle \geq 0$ for all $v\in V$, and it is called \emph{self-adjoint} if $L^* = L$. The Schur complement, more generally, can be seen as a way of restricting a self-adjoint positive semi-definite operator on a real inner product space onto a subspace as follows. 
Assume that $L : V\to V$ is a self-adjoint positive semi-definite opeator on $V$, where $V$ is a finite dimensional ($\dim_{\R} V = n$) real inner product space. 
Let $W \subseteq V$ be a $d$-dimensional subspace and let $W^{\perp}$ be its orthogonal complement. By choosing bases for $W$ and $W^{\perp}$, we can represent $L$ as a block matrix, say $[L] = \begin{pmatrix}
A & B \\
C & D
\end{pmatrix}$ where $A \in \R^{d \times d}$, $D \in \R^{(n-d) \times (n-d)}$. Then, $[L] / D = A-BD^\dagger C$ can be interpreted as the restriction of $L$ onto W, represented by the already chosen basis. We will see that the resulting operator represented by $[L]/D $ is independent of choice of basis (i.e. it is well-defined) and we call this operator the \emph{Schur restriction of $L$ onto $W$}, and denote it by $\Sch(L, W)$.

\begin{proposition}[The Schur restriction is well-defined]\label{prop:well-defn}
Let $L : V\to V$ be a self-adjoint positive semi-definite operator and let $W\subseteq V$ be a subspace. Then, $\Sch(L,W)$ is independent of choice of bases of $W$ and $W^\perp$. More explicitly, if $\mathcal{B}_1$ and $\mathcal{C}_1$ are ordered bases for $W$ and $\mathcal{B}_2$ and $\mathcal{C}_2$ are ordered bases for $W^\perp$, then the matrix representations of $\Sch(L,W)$ obtained from the ordered bases $\mathcal{B}_1 \cup \mathcal{B}_2$ and $\mathcal{C}_1 \cup \mathcal{C}_2$ are similar matrices via the change of basis matrix from $\mathcal{B}_1$ to $\mathcal{C}_1$. 
\end{proposition}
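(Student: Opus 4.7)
My plan is a direct block-matrix calculation, with the crux being the behavior of the Moore--Penrose pseudoinverse under change of basis. First, since both ordered bases $\mathcal{B}_1 \cup \mathcal{B}_2$ and $\mathcal{C}_1 \cup \mathcal{C}_2$ respect the orthogonal decomposition $V = W \oplus W^\perp$, the change of basis matrix from one to the other is block-diagonal: $P = \mathrm{diag}(P_1, P_2)$, where $P_1$ denotes the change of basis from $\mathcal{B}_1$ to $\mathcal{C}_1$ on $W$ and $P_2$ the change of basis from $\mathcal{B}_2$ to $\mathcal{C}_2$ on $W^\perp$. Applying $[L]_\mathcal{C} = P\,[L]_\mathcal{B}\,P^{-1}$ to the block form $[L]_\mathcal{B} = \begin{pmatrix} A & B \\ C & D \end{pmatrix}$ yields
$$[L]_\mathcal{C} = \begin{pmatrix} P_1 A P_1^{-1} & P_1 B P_2^{-1} \\ P_2 C P_1^{-1} & P_2 D P_2^{-1} \end{pmatrix}.$$

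Next, I would compute the Schur complement of the lower-right block of $[L]_\mathcal{C}$ and show that it equals $P_1 ([L]_\mathcal{B}/D) P_1^{-1}$. Direct expansion gives
$$[L]_\mathcal{C} / (P_2 D P_2^{-1}) = P_1 A P_1^{-1} - (P_1 B P_2^{-1})(P_2 D P_2^{-1})^\dagger (P_2 C P_1^{-1}),$$
so after factoring $P_1$ on the left and $P_1^{-1}$ on the right, the claim reduces to the single identity $(P_2 D P_2^{-1})^\dagger = P_2 D^\dagger P_2^{-1}$. When $D$ is invertible this is immediate from $(P_2 D P_2^{-1})^{-1} = P_2 D^{-1} P_2^{-1}$; however, $D$ can be singular here since $L$ is only positive semi-definite, so the general case genuinely requires the Moore--Penrose pseudoinverse rather than a literal inverse.

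The main obstacle is precisely this pseudoinverse identity, which \emph{fails} for generic $P_2$ when $D$ is singular. To circumvent this, I would first establish the proposition in the special case of \emph{orthonormal} bases $\mathcal{B}_2, \mathcal{C}_2$ of $W^\perp$: then $P_2$ is orthogonal, and the identity $(P_2 D P_2^T)^\dagger = P_2 D^\dagger P_2^T$ follows from the four defining conditions of the Moore--Penrose pseudoinverse being preserved under orthogonal conjugation. Arbitrary bases are reduced to this case by applying Gram--Schmidt separately within $W$ and $W^\perp$, which is itself a block-diagonal change of basis of the allowed form, and composing the resulting similarities on $W$.

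An alternative that bypasses the singular-pseudoinverse subtleties altogether is to give a basis-free characterization of $\Sch(L, W)$ up front, for instance via the quadratic-form minimization $\langle \Sch(L,W) w, w \rangle = \min_{u \in W^\perp} \langle L(w+u), w+u \rangle$ for $w \in W$. Under this approach basis-independence of $\Sch(L,W)$ is automatic, and only the matrix-to-operator identification $A - B D^\dagger C$ needs to be checked against the minimization for a single (orthonormal) choice of basis, which again reduces to a direct calculation in coordinates.
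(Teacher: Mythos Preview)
Your primary line has a real gap at the reduction step. You correctly prove the case where both $\mathcal{B}_2$ and $\mathcal{C}_2$ are orthonormal (so $P_2$ is orthogonal and $(P_2 D P_2^{\mathrm{T}})^\dagger = P_2 D^\dagger P_2^{\mathrm{T}}$ holds). But the Gram--Schmidt passage from an arbitrary $\mathcal{B}_2$ to an orthonormal $\mathcal{B}_2'$ is itself an instance of the proposition with $P_2$ upper triangular, \emph{not} orthogonal; your special case does not cover it, so the reduction is circular. Your alternative via the minimization characterization has the same lacuna in disguise: checking $A-BD^\dagger C$ only in one orthonormal basis still leaves open why that same formula, computed in an \emph{arbitrary} basis, represents the basis-free operator.

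The paper's remedy is to abandon the strong identity $(P_2 D P_2^{-1})^\dagger = P_2 D^\dagger P_2^{-1}$ and establish only the sandwiched form that the Schur complement actually requires. Concretely, it fixes one orthonormal reference basis, factors $[L]$ there as $EE^{\mathrm{T}}$ (possible since the matrix is symmetric PSD in orthonormal coordinates), and proves a dedicated cancellation lemma $E^{\mathrm{T}} R (R^{-1} E E^{\mathrm{T}} R)^\dagger R^{-1} E = E^\dagger E$ valid for \emph{every} invertible $R$, using only the projector identity $HH^\dagger=\pi_{\Ima(H)}$. The underlying reason this works while the bare conjugation identity fails is that positive semi-definiteness forces the range/kernel conditions $\Ima(C)\subseteq\Ima(D)$ and $\ker(D)\subseteq\ker(B)$, so the ambiguity in $(P_2 D P_2^{-1})^\dagger$ versus $P_2 D^\dagger P_2^{-1}$ lies in $\ker(D)$ and is annihilated once you multiply by $B$ on the left. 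Targeting this weaker, sandwiched identity is the missing step in your argument.
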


As~\autoref{prop:well-defn} guarantees that the Schur restriction of a self-adjoint positive semi-definite operator onto a subspace is well-defined, the next proposition reveals the recipe to acquire the Schur restriction and also justifies the name, ``\emph{Schur restriction}''.

\begin{proposition}\label{prop:schur-restriction}
Let $f: \hat{V} \to V$ be a linear map between two finite dimensional real inner product spaces and let $L = f\circ f^*:V\rightarrow V$. Let $W\subseteq V$ be a subspace. Let $f_W : f^{-1}(W) \to W$ be the restriction of $f$ on $f^{-1}(W)$ and the codomain is also restricted to $W$. Then, $\Sch(L, W) = f_W \circ f_W^*$.
\end{proposition}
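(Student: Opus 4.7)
The plan is to choose compatible orthogonal decompositions of $\hat V$ and $V$, read off the block form of $f$, and reduce the Schur complement of $L = f\circ f^*$ to a cancellation that hinges on the injectivity of a single off-diagonal block.

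First I would invoke~\autoref{prop:well-defn} to compute in bases adapted to $V = W \oplus W^\perp$ and $\hat V = f^{-1}(W) \oplus f^{-1}(W)^\perp$. Since $f$ maps $f^{-1}(W)$ into $W$ by definition, relative to these decompositions $f$ has the block form
\[
f = \begin{pmatrix} f_W & g \\ 0 & h \end{pmatrix},
\]
where $g : f^{-1}(W)^\perp \to W$ and $h : f^{-1}(W)^\perp \to W^\perp$ are the two components of $f|_{f^{-1}(W)^\perp}$, and the lower-left block vanishes. Computing $L = ff^*$ block-wise with respect to $V = W \oplus W^\perp$ gives
\[
L = \begin{pmatrix} A & B \\ C & D \end{pmatrix} = \begin{pmatrix} f_W f_W^* + g g^* & g h^* \\ h g^* & h h^* \end{pmatrix},
\]
so by definition
\[
\Sch(L,W) \;=\; A - B D^\dagger C \;=\; f_W f_W^* + g g^* - g h^* (hh^*)^\dagger h g^*.
\]
The proposition therefore reduces to showing the identity $g h^*(hh^*)^\dagger h g^* = g g^*$.

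The crux is to verify that $h$ is injective: any $y \in f^{-1}(W)^\perp$ with $h(y)=0$ would satisfy $f(y) = g(y) \in W$, hence $y \in f^{-1}(W) \cap f^{-1}(W)^\perp = \{0\}$. Combined with the Moore--Penrose identity $h^\dagger = h^*(hh^*)^\dagger$ (which one verifies directly by checking the four Moore--Penrose axioms for $h^*(hh^*)^\dagger$), injectivity of $h$ yields
\[
h^*(hh^*)^\dagger h \;=\; h^\dagger h \;=\; P_{\ker(h)^\perp} \;=\; I_{f^{-1}(W)^\perp}.
\]
Substituting back cancels the two $g g^*$ contributions, leaving $\Sch(L,W) = f_W f_W^*$, as required.

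The main obstacle is that $D = hh^*$ need not be invertible, so the generalized inverse $D^\dagger$ must be handled carefully rather than treated as a genuine inverse. This is resolved entirely by the Moore--Penrose identity above together with the elementary argument for injectivity of $h$; no perturbation or limiting argument is required.
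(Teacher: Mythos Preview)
Your proof is correct and takes a genuinely different route from the paper's. The paper's proof relies on the extremal (Loewner-order) characterization of the Schur complement, \autoref{exschur}: it first checks that $f_W\circ f_W^*$ belongs to the set $S=\{M:W\to W \text{ PSD} \mid L\succeq \iota_V\circ M\circ\proj_W\}$, and then, using \autoref{lemmaForMaximality} to pick a suitable $w^\perp$ for each $w\in W$, argues that $f_W\circ f_W^*$ dominates every $T\in S$. Your argument instead computes the Schur complement directly in the block decomposition $V=W\oplus W^\perp$, $\hat V=f^{-1}(W)\oplus f^{-1}(W)^\perp$, and the whole proof hinges on the clean observation that the block $h:f^{-1}(W)^\perp\to W^\perp$ is injective, so that $h^*(hh^*)^\dagger h=h^\dagger h=I$. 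This is more elementary: it avoids both the extremal characterization and \autoref{lemmaForMaximality}, and handles the non-invertibility of $D=hh^*$ via a single Moore--Penrose identity rather than any maximality argument. What the paper's approach buys, on the other hand, is the conceptual connection to the Loewner order (cf.~\autoref{rem:adjoint}), which is exploited downstream in the monotonicity results of \autoref{sec:mono}; your block computation does not surface that structure, but it settles the proposition itself with less machinery.
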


The proof we present for~\autoref{prop:schur-restriction} in~\autoref{app:secSchur} heavily depends on the extremal characterization of Schur restrictions,~\autoref{exschur}, which essentially, in the language of category theory, states that Schur restriction, as a functor, is a right adjoint. See~\autoref{rem:adjoint} for details. One of the most significant applications of~\autoref{prop:schur-restriction} is the following theorem that establishes a relation between persistent Laplacians and the Schur restriction.

\begin{theorem}
[Up and down persistent Laplacians as Schur restrictions]\label{thm:schur-persistent}
For a weight-preserving simplicial map $f: K \to L$, we have that  
\begin{align*}
    &\Delta_{q,\mathrm{down}}^{K,L} = \hat{f}_q \circ \Sch(\Delta_{q,\mathrm{down}}^K, \ker(f_q)^\perp) \circ \hat{f}_q^{-1} \text{ and } \\
    &\Delta_{q,\mathrm{up}}^{K,L} = \iota_{\Ima(f_q)} \circ \Sch(\Delta_{q,\mathrm{up}}^L, f_q(\ker(\partial_{q}^K))) \circ \proj_{f_q(\ker(\partial_{q}^K))},
\end{align*}

where $\iota_{\Ima(f_q)} : f_q(\ker(\partial_q^K)) \hookrightarrow \Ima(f_q)$ is the inclusion map and $\proj_{f_q(\ker(\partial_{q}^K)} : \Ima(f_q) \to f_q(\ker(\partial_q^K))$ is the projection map.
\end{theorem}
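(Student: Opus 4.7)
The plan is to derive both identities as direct applications of Proposition \ref{prop:schur-restriction}, writing each of $\Delta_{q,\mathrm{down}}^K$ and $\Delta_{q,\mathrm{up}}^L$ in the form $g\circ g^*$ and recognizing $\ChDown{K}{L}$ and $\ChUp{L}{K}$ as preimages $g^{-1}(W)$ for the appropriate subspaces $W$. The additional steps will consist of bookkeeping: pre- and post-composing with the isometry $\hat f_q$ in the down case, and inserting the inclusion/projection pair in the up case to reconcile the codomain discrepancy between $f_q(\ker\partial_q^K)$ and $\Ima(f_q)$.

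For the down identity, I would take $g := (\partial_q^K)^* : C_{q-1}^K \to C_q^K$, so that $g\circ g^* = (\partial_q^K)^*\circ \partial_q^K = \Delta_{q,\mathrm{down}}^K$, and set $W := \ker(f_q)^\perp \subseteq C_q^K$. Unfolding definitions, $g^{-1}(W) = \{c\in C_{q-1}^K : (\partial_q^K)^*(c)\in \ker(f_q)^\perp\} = \ChDown{K}{L}$, and the map $g_W$ of Proposition \ref{prop:schur-restriction} is therefore exactly $\delta_{q-1}^{K,L} : \ChDown{K}{L}\to \ker(f_q)^\perp$ (its image indeed lies in $\ker(f_q)^\perp$ by the very definition of $\ChDown{K}{L}$). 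The proposition then gives $\Sch(\Delta_{q,\mathrm{down}}^K,\ker(f_q)^\perp) = \delta_{q-1}^{K,L}\circ(\delta_{q-1}^{K,L})^*$, and pre- and post-composing with $\hat f_q^{-1}$ and $\hat f_q$ recovers the definition \eqref{defn:downLap} of $\Delta_{q,\mathrm{down}}^{K,L}$.

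For the up identity, I would apply Proposition \ref{prop:schur-restriction} with $g := \partial_{q+1}^L : C_{q+1}^L\to C_q^L$, so $g\circ g^* = \Delta_{q,\mathrm{up}}^L$, and $W := f_q(\ker\partial_q^K)\subseteq C_q^L$. The preimage $g^{-1}(W) = \{c\in C_{q+1}^L : \partial_{q+1}^L(c)\in f_q(\ker\partial_q^K)\}$ is exactly $\ChUp{L}{K}$, and the induced map $g_W : \ChUp{L}{K}\to f_q(\ker\partial_q^K)$ coincides with $\partial_{q+1}^{L,K}$ after restricting its codomain from $\Ima(f_q)$ down to $f_q(\ker\partial_q^K)$. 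Factoring $\partial_{q+1}^{L,K} = \iota_{\Ima(f_q)}\circ g_W$ and taking adjoints yields $(\partial_{q+1}^{L,K})^* = g_W^*\circ \proj_{f_q(\ker\partial_q^K)}$, since the adjoint of an isometric inclusion into an inner product space is the orthogonal projection onto its image. Substituting these into $\Delta_{q,\mathrm{up}}^{K,L} = \partial_{q+1}^{L,K}\circ(\partial_{q+1}^{L,K})^*$ and invoking Proposition \ref{prop:schur-restriction} for the middle factor $g_W\circ g_W^* = \Sch(\Delta_{q,\mathrm{up}}^L, f_q(\ker\partial_q^K))$ delivers the stated formula.

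The principal delicate point will be the up-Laplacian bookkeeping, namely justifying the factorization $\partial_{q+1}^{L,K} = \iota_{\Ima(f_q)}\circ g_W$ and correctly dualizing it so that the inclusion becomes a projection; this relies on Lemma \ref{lem:isometry} guaranteeing that $\Ima(f_q)$ carries an inner product inherited from $C_q^L$ for which the orthogonal projection onto $f_q(\ker\partial_q^K)$ makes sense. For the down case, the only subtlety is the observation that $\delta_{q-1}^{K,L}$ lands in $\ker(f_q)^\perp$ (hence is legitimately the $g_W$ of the proposition), which is immediate from the defining condition of $\ChDown{K}{L}$. Once these points are settled, both identities follow by straightforward substitution.
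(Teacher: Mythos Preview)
Your proposal is correct and matches the paper's own proof essentially line for line: both identities are obtained by applying Proposition~\ref{prop:schur-restriction} with exactly the choices of $g$ and $W$ you describe, and the paper handles the up case via the same codomain-restricted map (which it denotes $\bar{\partial}_{q+1}^{L,K}$, your $g_W$) and the same factorization $\partial_{q+1}^{L,K}=\iota_{\Ima(f_q)}\circ\bar{\partial}_{q+1}^{L,K}$ with adjoint $(\partial_{q+1}^{L,K})^*=(\bar{\partial}_{q+1}^{L,K})^*\circ\proj_{f_q(\ker\partial_q^K)}$. One small remark: the inner product on $\Ima(f_q)$ is simply the one inherited as a subspace of $C_q^L$, so you do not actually need Lemma~\ref{lem:isometry} for the up case.
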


\section{Matrix Representation of Persistent Laplacian and an Algorithm }\label{sec:matrix-and-algo}

Based on the Schur restriction characterization of persistent Laplacians, i.e.~\autoref{thm:schur-persistent}, in the previous section, we now derive an algorithm for computing the matrix representation of persistent Laplacians.

\subsection{Matrix Representation of Persistent Laplacian}

Let $f: K\to L$ be a weight preserving simplicial map. Recall that for every oriented $q$-simplex $[\tau] \in \Ima(f_q)$, we defined the $K$ $q$-chain
\[ 
c_{q}^{\tau} := \sum\limits_{\substack{\sigma \in S_q^K, \\ f([\sigma]) = \pm [\tau]}} \sgn_{f_q}(\sigma)w_q^K(\sigma) [\sigma]\,\,\ \in C_q^K.
\]

By~\autoref{lem:isometry}, the set $\mathcal{J} = \{ c_q^\tau \mid \tau\in \Ima(f_q) \}$
forms a orthogonal basis for $\ker(f_q)^\perp$.
Assume that $\{ [\tau_1] ,..., [\tau_n] \} \subseteq \Ima(f_q)$ is the set of all oriented $q$-simplices in $L$ that are hit by $f_q$.
Assume that for every $[\tau_i]$, $\{ [\sigma_1^i], ...., [\sigma_{d_i}^i] \} \subseteq \mathcal{S}_q^K$ is the set of all oriented $q$-simplices in $K$ that are mapped to $\pm [\tau_i]$. 
Define 
\[
\sigma^{i,k} := \sgn_{f_q}(\sigma_1^i)[\sigma_1^i] - \sgn_{f_q}(\sigma_k^i)[\sigma_k^i]
\]
for $i= 1, ..., n$ and $k = 2, ..., d_i$ for $d_i \geq 2$. 
Then, the set 
\[
\mathcal{B} = \{ \sigma^{i,k}  \mid 1\leq i \leq n \text{ , } 2\leq k \leq d_i\} \cup \{ [\sigma] \in \mathcal{S}_q^K \mid f_q([\sigma]) = 0 \}
\]
forms a basis for $\ker(f_q)$. Thus $\mathcal{J} \cup \mathcal{B}$ forms a basis for $C_q(K)$. Writing coordinates of basis elements of $\mathcal{J} \cup \mathcal{B}$ using the canonical basis $\mathcal{S}_q^K$ as column vectors, we obtain the change of basis matrix $M_{\mathcal{J}\cup \mathcal{B} \to \mathcal{S}_q^K}$.

\paragraph*{Matrix representation of down persistent Laplacian.} 
Let $[\Delta_{q,\mathrm{down}}^K]$ be the matrix representation of $\Delta_{q,\mathrm{down}}^K$ with respect to the canonical basis $\mathcal{S}_q^K$. Then, $N := (M_{\mathcal{J}\cup \mathcal{B} \to \mathcal{S}_q^K})^{-1} [\Delta_{q,\mathrm{down}}^K] M_{\mathcal{J}\cup \mathcal{B} \to \mathcal{S}_q^K}$ is the matrix representation of $\Delta_{q,\mathrm{down}}^K$ with respect to $\mathcal{J} \cup \mathcal{B}$. 
Given an integer $m$, let $[m]$ denote the set $[m] = \{1, 2, \ldots, m \}$. 
The matrix $N$ has dimension $n_q^K \times n_q^K$ where $n_q^K = |S_q^K|$. 
Let $n := |\mathcal{J}| = \dim(\Ima(f_q)) =\dim(\ker(f_q)^\perp)$ and let 
\begin{equation}\label{eq:XYZ}
    X = N([n],[n]),Y= N([n],[n_q^K]-[n]),Z = N([n_q^K]-[n], [n]), T = N([n_q^K]-[n],[n_q^K]-[n]).
\end{equation}
Then, we can write $N$ as a block matrix $N = \begin{pmatrix}
X & Y \\
Z & T
\end{pmatrix}$.
Let $W_{\Ima(f_q)}$ denote the diagonal matrix $W_{\Ima(f_q)} = \text{diag}(w(\tau_1),w(\tau_2),...,w(\tau_n))$. Then, we are now ready to write the matrix representation of $\Delta_{q,\mathrm{down}}^{K,L}$ with respect to the canonical basis $\{ [\tau_1],...,[\tau_n]\}$ of $\Ima(f_q)$.

\begin{proposition}\label{prop:downMatrix}
With the notations above, the matrix representation of $\Delta_{q,\mathrm{down}}^{K,L}$ with respect to the canonical basis $\{ [\tau_1],...,[\tau_n]\}$ of $\Ima(f_q)$ is given by 
\[
W_{\Ima(f_q)} (X - Y T^\dagger Z) W_{\Ima(f_q)}^{-1}.
\]
\end{proposition}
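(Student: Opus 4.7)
The plan is to invoke the Schur restriction identity from \autoref{thm:schur-persistent},
\[
\Delta_{q,\mathrm{down}}^{K,L} \;=\; \hat{f}_q \circ \Sch(\Delta_{q,\mathrm{down}}^K,\ker(f_q)^\perp) \circ \hat{f}_q^{-1},
\]
and to compute the matrix of each of the three factors separately in compatible bases before multiplying. I would express $\Sch(\Delta_{q,\mathrm{down}}^K,\ker(f_q)^\perp)$ in the basis $\mathcal{J}$ of $\ker(f_q)^\perp$, and $\hat{f}_q$ between $\mathcal{J}$ and the canonical basis $\{[\tau_1],\ldots,[\tau_n]\}$ of $\Ima(f_q)$.

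First I would identify the block structure of $N$. By \autoref{lem:isometry}, $\mathcal{J}$ is a basis of $\ker(f_q)^\perp$; $\mathcal{B}$ is a basis of $\ker(f_q)$ by construction. Since $C_q^K = \ker(f_q)^\perp \oplus \ker(f_q)$ is an orthogonal decomposition, the ordered basis $\mathcal{J} \cup \mathcal{B}$ splits $C_q^K$ exactly in the way required by the block construction underlying the Schur complement: in the partition \eqref{eq:XYZ}, the block $X$ indexes rows and columns by $\mathcal{J}$ (i.e.\ by $\ker(f_q)^\perp$) and $T$ indexes by $\mathcal{B}$ (i.e.\ by $\ker(f_q)$). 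Invoking \autoref{prop:well-defn}, the Schur complement $X - Y T^\dagger Z$ is then the matrix representation of $\Sch(\Delta_{q,\mathrm{down}}^K,\ker(f_q)^\perp)$ in the basis $\mathcal{J}$.

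Next I would compute the matrix of $\hat{f}_q$ from $\mathcal{J}$ to $\{[\tau_1],\ldots,[\tau_n]\}$. A direct expansion using the definition of $c_q^{\tau_i}$, together with $\sgn_{f_q}(\sigma)^2 = 1$ and the weight-preserving hypothesis $w_q^L(\tau_i) = \sum_{f_q([\sigma]) = \pm[\tau_i]} w_q^K(\sigma)$, yields
\[
\hat{f}_q(c_q^{\tau_i}) \;=\; \sum_{\substack{\sigma \in S_q^K \\ f_q([\sigma]) = \pm[\tau_i]}} \sgn_{f_q}(\sigma)^2 \, w_q^K(\sigma) \, [\tau_i] \;=\; w_q^L(\tau_i)\, [\tau_i],
\]
so the matrix of $\hat{f}_q$ in these bases is the diagonal matrix $W_{\Ima(f_q)}$, and that of $\hat{f}_q^{-1}$ is $W_{\Ima(f_q)}^{-1}$.

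Composing the three matrices in the order dictated by the Schur restriction identity yields the announced expression $W_{\Ima(f_q)}(X - Y T^\dagger Z) W_{\Ima(f_q)}^{-1}$. The only delicate point — and the expected main obstacle — is justifying that the Schur complement formula applied to $N$ represents the Schur restriction even though the basis $\mathcal{J}$ is only orthogonal and not orthonormal. This is, however, precisely what \autoref{prop:well-defn} guarantees, so with that result already established no further work is required.
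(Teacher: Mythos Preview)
Your proposal is correct and follows the same approach as the paper's own proof: invoke \autoref{thm:schur-persistent}, identify $X-YT^\dagger Z$ as the matrix of $\Sch(\Delta_{q,\mathrm{down}}^K,\ker(f_q)^\perp)$ in the basis $\mathcal{J}$ via \autoref{prop:well-defn}, compute the matrix of $\hat{f}_q$ as $W_{\Ima(f_q)}$, and conjugate. Your write-up is more detailed than the paper's two-line argument (in particular the explicit verification that $\hat f_q(c_q^{\tau_i})=w_q^L(\tau_i)[\tau_i]$ and the remark on why orthogonality rather than orthonormality of $\mathcal{J}$ suffices), but the strategy is identical.
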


\begin{figure}[htb!]
    \centering
    \includegraphics{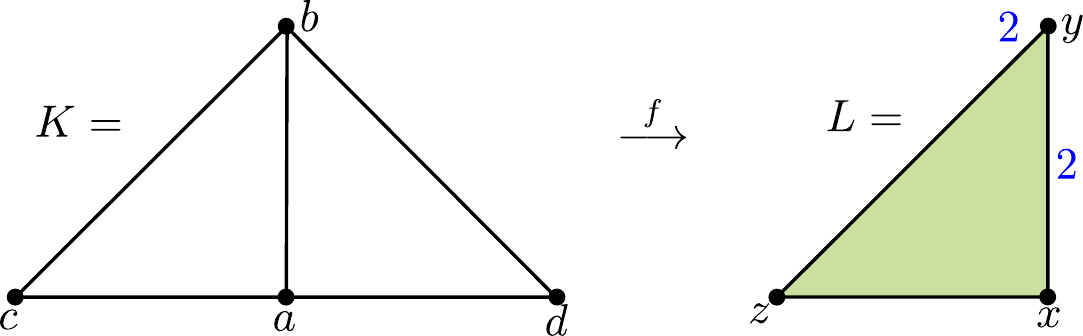}
    \caption{A weight preserving simplicial map $f : K\to L$ between two weighted simplicial complexes $K$ and $L$. $K$ has all the weights equal to 1. In $L$, the edge $xy$ and the vertex $y$ has weights $2$ and the rest of the simplicies have weight $1$. The map $f$ is given by $a\mapsto x$, $b\mapsto y$, $c\mapsto z$, $d\mapsto b$. And, ordering on the vertices are given by $a<b<c<d$ and $x<y<z$} 
    \label{fig:example}
\end{figure}
\begin{example}\label{ex:matrix rep down}
We will compute the matrix representation of the $1$st down persistent Laplacian of the weight preserving simplicial map depicted in~\autoref{fig:example}. The $1$st combinatorial down Laplacian of $K$ is given by
\[ [\Delta_{1,\mathrm{down}}^K] =
\begin{pmatrix}
2 & -1 & 1 & 1 & -1 \\
-1 & 2 & 1 & 0 & 1 \\
1 & 1 & 2 & 1 & 0 \\
1 & 0 & 1 & 2 & 1 \\
1 & 1 & 0 & 1 & 2
\end{pmatrix}.
\]
with respect to the canonical (ordered) basis $\mathcal{S}_1^K = \{[ab], [bc], [ac], [ad], [bd] \}$. Following the notation described above, we have that $\mathcal{J} = \{[ab]+[ad], [bc], [ac] \}$ and $\mathcal{B} = \{[ab]-[ad], [bd] \}$. Thus, we have the change of basis matrix as
\[
M_{\mathcal{J}\cup \mathcal{B} \to \mathcal{S}_1^K} =
\begin{pmatrix}
1 & 0 & 0 & 1 & 0 \\
0 & 1 & 0 & 0 & 0 \\
0 & 0 & 1 & 0 & 0 \\
1 & 0 & 0 & -1 & 0 \\
0 & 0 & 0 & 0 & 1
\end{pmatrix}.
\]
Then, we compute
\[
N = (M_{\mathcal{J}\cup \mathcal{B} \to \mathcal{S}_1^K})^{-1} [\Delta_{1,\mathrm{down}}^K] M_{\mathcal{J}\cup \mathcal{B} \to \mathcal{S}_1^K} =
\begin{pmatrix}
3 & -\frac{1}{2} & 1 & 0 & 0 \\
-1 & 2 & 1 & -1 & 1 \\
2 & 1 & 2 & 0 & 0 \\
0 & -\frac{1}{2} & 0 & 1 & -1 \\
0 & 1 & 0 & -2 & 2 
\end{pmatrix}.
\]
Now, by extracting $X, Y, Z,$ and $T$ as described above in Equation \eqref{eq:XYZ}, and realizing that $W_{\Ima(f_1)} =\mathrm{diag}(2,1,1)$, we write the matrix representation of the $1$st down persistent Laplacian $\Delta_{1,\mathrm{down}}^{K,L}$ with respect to the basis $\{[xy], [yz], [xz]\}$ as follows
\[
[\Delta_{1,\mathrm{down}}^{K,L}] = W_{\Ima(f_q)} (X - Y T^\dagger Z) W_{\Ima(f_q)}^{-1} =
\begin{pmatrix}
3 & -1 & 2\\
-\frac{1}{2} & \frac{3}{2} & 1 \\
1 & 1 & 2
\end{pmatrix}.
\]
\end{example}

\paragraph*{Matrix representation of up persistent Laplacian.}
In order to write the matrix representation of up persistent Laplacian we need to choose bases $\mathcal{B}_1$ and $\mathcal{B}_2$ for $f_q(\ker(\partial_q^K))$ and $f_q(\ker(\partial_q^K))^\perp \subseteq \Ima(f_q)$ respectively, where $f_q(\ker(\partial_q^K))^\perp$ denotes the orthogonal complement of $f_q(\ker(\partial_q^K))$ inside the ambient space $\Ima(f_q)$.
Let $\mathcal{D} = \{[\tau_{n+1}],...,[\tau_{n+l}] \} = \mathcal{S}_q^L - f_q(\pm \mathcal{S}_q^K)$. 
Then, $\mathcal{B}_1 \cup \mathcal{B}_2 \cup \mathcal{D}$ is basis for $C_q(L)$. 
Writing the coordinates of this new basis elements with respect to the canonical basis $\mathcal{S}_q^L$ as column vectors, we obtain the change of basis matrix 
\[M_{\mathcal{B}_1 \cup \mathcal{B}_2 \cup \mathcal{D} \to \mathcal{S}_q^L} = \begin{pmatrix}
R_1 & R_2 & 0_{n\times l} \\
0_{l\times \mathrm{rk}R_1}   & 0_{l\times \mathrm{rk}R_2}   & \mathbb{I}_l
\end{pmatrix}\]
where $R :=\begin{pmatrix}
R_1 & R_2
\end{pmatrix}$ is the $n\times n$ change of basis matrix from $\mathcal{B}_1 \cup \mathcal{B}_2$ to the canonical basis of $\Ima(f_q)$, and $\mathbb{I}_l$ is the $l \times l$ identity matrix.

Let $[\Delta_{q,\mathrm{up}}^{L}]$ be the matrix representation of $\Delta_{q,\mathrm{up}}^L$ with respect to the canonical basis of $C_q(L)$. Then, $Q = (M_{\mathcal{B}_1 \cup \mathcal{B}_2 \cup \mathcal{D} \to \mathcal{S}_q^L})^{-1} [\Delta_{q,\mathrm{up}}^{L}] M_{\mathcal{B}_1 \cup \mathcal{B}_2 \cup \mathcal{D} \to \mathcal{S}_q^L}$ is the matrix representation of $\Delta_{q,\mathrm{up}}^{L}$ with respect to $\mathcal{B}_1 \cup \mathcal{B}_2 \cup \mathcal{D} $. Let $n_p = \dim(f_q(\ker(\partial_q^K)))$ and let $E = Q([n_p], [n_p])$.Thus we can write $Q$ as a block matrix
\[Q = \begin{pmatrix}
E & F \\ 
G & H
\end{pmatrix}\]
where $F, G, H$ are chosen appropriately to $E$. We are now ready to write the matrix representation of $\Delta_{q,\mathrm{up}}^{K,L}$ with respect to the canonical basis of $\Ima(f_q)$.

\begin{proposition}\label{prop:upMatrix}
With the notations above, the matrix representation of $\Delta_{q,\mathrm{up}}^{K,L}$ with respect to the canonical basis of $\Ima(f_q)$ is given by
\begin{equation}\label{eq:upLapMat}
  \begin{pmatrix}
R_1 & R_2
\end{pmatrix}
\begin{pmatrix}
E - F H^\dagger G & 0_{n_p \times (n-n_p)}\\
0_{(n-n_p)\times n_p}                 & 0_{(n-n_p)\times (n-n_p)}
\end{pmatrix}
\begin{pmatrix}
R_1 & R_2
\end{pmatrix}^{-1}.  
\end{equation}
\end{proposition}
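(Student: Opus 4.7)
The plan is to reduce the claim to the Schur restriction characterization from \autoref{thm:schur-persistent}, namely
\[
\Delta_{q,\mathrm{up}}^{K,L} \;=\; \iota_{\Ima(f_q)} \circ \Sch\!\bigl(\Delta_{q,\mathrm{up}}^L,\; f_q(\ker \partial_q^K)\bigr) \circ \proj_{f_q(\ker \partial_q^K)},
\]
and then transport the right-hand side through the bases chosen in the statement. Concretely, I would first check that $\mathcal{B}_1\cup\mathcal{B}_2\cup \mathcal{D}$ really is a basis of $C_q^L$, with $\mathcal{B}_1$ spanning $W:=f_q(\ker\partial_q^K)$ and $\mathcal{B}_2\cup \mathcal{D}$ spanning its orthogonal complement in $C_q^L$. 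The nontrivial point here is that $\mathrm{span}(\mathcal{D})$ is orthogonal to $\Ima(f_q)$: this holds because the inner product on $C_q^L$ is diagonal in the canonical basis $\mathcal{S}_q^L$, and $\mathcal{D}=\mathcal{S}_q^L\setminus f_q(\pm \mathcal{S}_q^K)$ is disjoint from the simplices spanning $\Ima(f_q)$. Since $\mathcal{B}_2$ is orthogonal to $\mathcal{B}_1$ inside $\Ima(f_q)$ by construction, we conclude $W^\perp=\mathrm{span}(\mathcal{B}_2\cup \mathcal{D})$ in $C_q^L$.

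Next, I would apply \autoref{prop:well-defn}: since $Q$ is the matrix of $\Delta_{q,\mathrm{up}}^L$ in the ordered basis $\mathcal{B}_1\cup(\mathcal{B}_2\cup \mathcal{D})$ adapted to the decomposition $C_q^L=W\oplus W^\perp$, the Schur complement of its lower--right block yields the matrix of $\Sch(\Delta_{q,\mathrm{up}}^L, W)$ in the basis $\mathcal{B}_1$. With the block partition
\[
Q=\begin{pmatrix} E & F\\ G & H\end{pmatrix},
\]
this matrix is exactly $E-FH^\dagger G$.

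Now I extend this operator to all of $\Ima(f_q)$ by composing with $\iota_{\Ima(f_q)}$ and $\proj_W$. In the basis $\mathcal{B}_1\cup \mathcal{B}_2$ of $\Ima(f_q)$, the projection kills the $\mathcal{B}_2$-block and the inclusion does nothing on the $\mathcal{B}_1$-block, so the composed operator has the block-diagonal matrix
\[
\begin{pmatrix} E-FH^\dagger G & 0_{n_p\times(n-n_p)} \\ 0_{(n-n_p)\times n_p} & 0_{(n-n_p)\times(n-n_p)} \end{pmatrix}
\]
in the ordered basis $\mathcal{B}_1\cup \mathcal{B}_2$. Finally, $R=\begin{pmatrix}R_1 & R_2\end{pmatrix}$ is by definition the change-of-basis matrix from $\mathcal{B}_1\cup \mathcal{B}_2$ to the canonical basis of $\Ima(f_q)$, so conjugating by $R$ yields the stated formula.

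The main technical subtlety to nail down is the orthogonality step (that $W^\perp$ in $C_q^L$ decomposes as $\mathrm{span}(\mathcal{B}_2)\oplus \mathrm{span}(\mathcal{D})$): without this, the block structure of $Q$ would not align with a valid choice of basis for the Schur restriction and \autoref{prop:well-defn} could not be invoked. Everything else is a bookkeeping exercise in change of basis, made clean by the diagonal form of the weighted inner product.
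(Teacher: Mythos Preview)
Your proposal is correct and follows essentially the same approach as the paper's own proof: invoke the Schur restriction characterization $\Delta_{q,\mathrm{up}}^{K,L} = \iota_{\Ima(f_q)} \circ \Sch(\Delta_{q,\mathrm{up}}^L, f_q(\ker\partial_q^K)) \circ \proj_{f_q(\ker\partial_q^K)}$, identify $E-FH^\dagger G$ as the matrix of the Schur restriction in the basis $\mathcal{B}_1$, zero-pad to account for the inclusion/projection, and conjugate by $R$ to return to the canonical basis. The paper's proof is in fact much terser and leaves implicit exactly the orthogonality check (that $\mathrm{span}(\mathcal{B}_2\cup\mathcal{D})=W^\perp$ in $C_q^L$) that you flag as the main technical subtlety, so your write-up is a faithful expansion of the same argument.
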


\begin{example}\label{ex:matrix rep up}
We will compute the matrix representation of the $1$st up persistent Laplacian of the weight preserving simplicial map depicted in~\autoref{fig:example}. We will stick to the notation used above. We start by choosing bases $\mathcal{B}_1$ and $\mathcal{B}_2$ for $f_q(\ker(\partial_q^K))$ and $f_q(\ker(\partial_q^K))^\perp \subseteq \Ima(f_q)$ respectively. Observe that $f_q(\ker(\partial_q^K))$ is spanned by $[xy]+[yz]-[xz]$. So, we can choose $\mathcal{B}_1 = \{ [xy]+[yz]-[xz]\}$ and $\mathcal{B}_2 = \{ 2[xy] - [yz], [yz]+[xz] \}$. As $f_1 : C_q^K \to C_1^L$ is surjective, we see that $\mathcal{D} = \emptyset$. Thus, $\mathcal{B}_1 \cup \mathcal{B}_2$ is a basis for $C_1^L$. Then, we have the change of basis matrix as
\[
M_{\mathcal{B}_1 \cup \mathcal{B}_2 \cup \mathcal{D} \to \mathcal{S}_q^L} = M_{\mathcal{B}_1 \cup \mathcal{B}_2  \to \mathcal{S}_q^L} = 
\begin{pmatrix}
1 & 2 & 0 \\
1 & -1 & 1 \\
-1 & 0 & 1
\end{pmatrix}
\]
where $\mathcal{S}_1^L = \{[xy], [yz], [xz] \}$ is the canonical (ordered) basis of $C_1^L$. Moreover, we get that $\begin{pmatrix}
R_1 & R_2
\end{pmatrix} = M_{\mathcal{B}_1 \cup \mathcal{B}_2} = M_{\mathcal{B}_1 \cup \mathcal{B}_2 \cup \mathcal{D} \to \mathcal{S}_q^L}$. With respect to $\mathcal{S}_1^L$, the matrix representation of $1$st combinatorial up Laplacian of $L$ is given by
\[
[\Delta_{1,\mathrm{up}}^{K,L}] = 
\begin{pmatrix}
\frac{1}{2} & 1 & -1 \\
\frac{1}{2} & 1 & -1 \\
-\frac{1}{2} & -1 & 1
\end{pmatrix}.
\]
Now, we compute 
\[
Q = (M_{\mathcal{B}_1 \cup \mathcal{B}_2 \cup \mathcal{D} \to \mathcal{S}_q^L})^{-1} [\Delta_{q,\mathrm{up}}^{L}] M_{\mathcal{B}_1 \cup \mathcal{B}_2 \cup \mathcal{D} \to \mathcal{S}_q^L} = 
\begin{pmatrix}
\frac{5}{2} & 0 & 0 \\
0 & 0 & 0 \\
0 & 0 & 0
\end{pmatrix}
\]
and, we extract $E = \begin{pmatrix}
\frac{5}{2}
\end{pmatrix}$, $F = \begin{pmatrix}
0 & 0
\end{pmatrix}$, $G=\begin{pmatrix}
0 \\
0
\end{pmatrix}$ and $H = \begin{pmatrix}
0 & 0\\
0 & 0
\end{pmatrix}$.
Thus, $E-FH^\dagger G = \begin{pmatrix}
\frac{5}{2}
\end{pmatrix}$. Thus, the matrix representation of $\Delta_{1,\mathrm{up}}^{K,L}$ with respect to the basis $\mathcal{S}_1^L = \{[xy], [yz], [xz] \}$ is given by
\[
[\Delta_{1,\mathrm{up}}^{K,L}]=
\begin{pmatrix}
R_1 & R_2
\end{pmatrix} \begin{pmatrix}
E - FH^\dagger G & 0 & 0\\
0 & 0 & 0 \\
0 & 0 & 0
\end{pmatrix} \begin{pmatrix}
R_1 & R_2
\end{pmatrix}^{-1} = \begin{pmatrix}
\frac{1}{2} & 1 & -1 \\
\frac{1}{2} & 1 & -1 \\
-\frac{1}{2} & -1 & 1
\end{pmatrix}.
\]
\end{example}

\begin{remark}
By combining~\autoref{ex:matrix rep down} and~\autoref{ex:matrix rep up}, we can see that the matrix representation of the $1$st persistent Laplacian $\Delta_1^{K,L}$ is given by
\[
[\Delta_1^{K,L}] = [\Delta_{1,\mathrm{down}}^{K,L}] + [\Delta_{1,\mathrm{up}}^{K,L}] = 
\begin{pmatrix}
\frac{7}{2} & 0 & 1 \\
0 & \frac{5}{2} & 0 \\
1/2 & 0 & 3
\end{pmatrix}.
\]
Then, we can justify~\autoref{persBetti} by observing that $\det([\Delta_1^{K,L}]) = 25 \neq 0$. That is, $\dim(\ker(\Delta_1^{K,L})) = 0 = \beta_1^{K,L}$.
\end{remark}

\begin{figure}[htb!]
    \centering
    \includegraphics{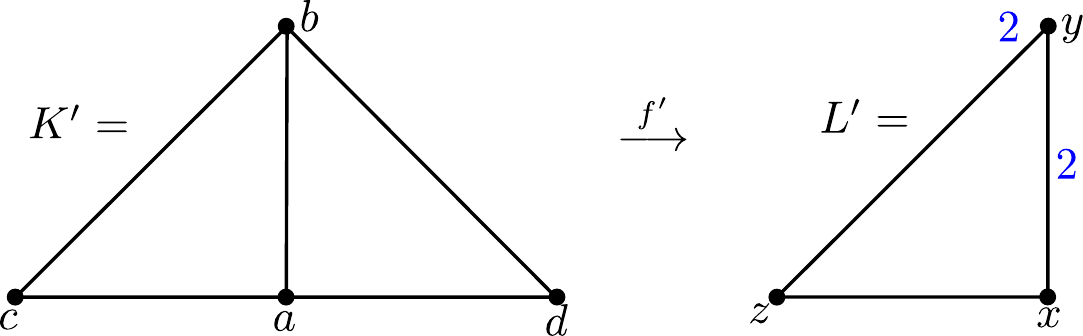}
    \caption{A weight preserving simplicial map $f' : K'\to L'$ between two weighted simplicial complexes $K'$ and $L'$. $K'$ has all the weights equal to 1. In $L'$, the edge $xy$ and the vertex $y$ has weights $2$ and the rest of the simplicies have weight $1$. The map $f'$ is given by $a\mapsto x$, $b\mapsto y$, $c\mapsto z$, $d\mapsto b$. And, ordering on the vertices are given by $a<b<c<d$ and $x<y<z$} 
    \label{fig:ex2}
\end{figure}

\begin{example}
Computing the matrix representation of the $1$st persistent Laplacian of the map $f' : K' \to L'$ depicted in~\autoref{fig:ex2} is similar to what we did for $f:K\rightarrow L$ in~\autoref{ex:matrix rep down} and~\autoref{ex:matrix rep up}. Actually, $[\Delta_{1,\mathrm{down}}^{K', L'}] = [\Delta_{1,\mathrm{down}}^{K, L}]$ as $C_1^K = C_1^{K'}$, $C_1^L = C_1^{L'}$, and $f_1=f_1'$ 
And, $[\Delta_{1,\mathrm{up}}^{K', L'}] = 0_{3\times 3}$ as $C_2^L = \{ 0 \}$. Thus,
\[
[\Delta_{1}^{K', L'}] =[\Delta_{1,\mathrm{down}}^{K', L'}] + [\Delta_{1,\mathrm{up}}^{K', L'}] = [\Delta_{1,\mathrm{down}}^{K, L}] + 0_{3\times 3} = \begin{pmatrix}
3 & -1 & 2\\
-\frac{1}{2} & \frac{3}{2} & 1 \\
1 & 1 & 2
\end{pmatrix}.
\]
Then, observe that $\dim(\ker(\Delta_1^{K',L'})) = 1 = \beta_1^{K',L'}$. Actually, the kernel of the matrix $[\Delta_1^{K',L'}]$ is generated by the vector $\begin{pmatrix}
1 & 1 & -1
\end{pmatrix}^\mathrm{T}$, which corresponds to the cycle $[xy]+[yz]-[xz]$ that can be seen as the image of the homology class that persists through the map $f'$.
\end{example}

\subsection{An Algorithm for Computing the Persistent Laplacian}
By~\autoref{prop:downMatrix} and~\autoref{prop:upMatrix}, we have the matrix representations of up and down persistent Laplacians with respect to the canonical basis of $\Ima{(f_q)}$. So, simply adding them up, gives us the matrix representation of the persistent Laplacian $\Delta_q^{K,L}$ with respect to the canonical basis. In the process for finding these matrices, we use explicit bases $S_q^K$, $S_q^L$, $\mathcal{B}\cup \mathcal{J}$ and $\mathcal{B}_1 \cup \mathcal{B}_2 \cup \mathcal{D}$. However, we do not have an explicit basis for $f_q(\ker(\partial_q^K))$. Yet, we do not need to compute $\ker(\partial_q^K)$ in order to compute $f_q(\ker(\partial_q^K))$ by the following lemma. 

\begin{lemma}\label{lem:fq ker = ker lap}
$f_q(\ker(\partial_q^K)) = \ker(\Delta_{q,\mathrm{down}}^{K,L})$.
\end{lemma}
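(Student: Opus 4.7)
The plan is to reduce the claim to a general identity about Schur restrictions, which then follows directly from Proposition 3.3.

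First I would use the orthogonal decomposition $C_q^K = \ker(f_q) \oplus \ker(f_q)^\perp$, which gives $f_q = \hat f_q \circ \proj_{\ker(f_q)^\perp}$ and hence
$$f_q(\ker(\partial_q^K)) = \hat f_q\bigl(\proj_{\ker(f_q)^\perp}(\ker(\partial_q^K))\bigr).$$
On the other hand, Theorem 3.4 together with the fact that $\hat f_q$ is a linear isomorphism (Lemma 2.4) yields
$$\ker(\Delta_{q,\mathrm{down}}^{K,L}) = \hat f_q\bigl(\ker(\Sch(\Delta_{q,\mathrm{down}}^K,\ker(f_q)^\perp))\bigr).$$
Since $\ker(\Delta_{q,\mathrm{down}}^K) = \ker((\partial_q^K)^*\partial_q^K) = \ker(\partial_q^K)$, the lemma reduces to the following purely linear-algebraic identity: for any self-adjoint positive semi-definite operator $L$ on a finite-dimensional real inner product space $V$ and any subspace $W\subseteq V$,
$$\ker(\Sch(L, W)) = \proj_W(\ker(L)).$$

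To prove this general identity I would factor $L = g\circ g^*$; the natural choice in our application is $g = (\partial_q^K)^*$, so that $g^{-1}(\ker(f_q)^\perp) = \ChDown{K}{L}$ and $g_W = \delta_{q-1}^{K,L}$. By Proposition 3.3, $\Sch(L, W) = g_W \circ g_W^*$ where $g_W = g|_{g^{-1}(W)}$ is viewed as a map into $W$. Standard identities $\ker(AA^*) = \ker(A^*) = \Ima(A)^{\perp}$ then give
$$\ker(\Sch(L, W)) = \Ima(g_W)^{\perp_W} = (\Ima(g)\cap W)^{\perp_W}.$$
Applying $(A\cap B)^\perp = A^\perp + B^\perp$ in the ambient $V$ and intersecting back with $W$, this simplifies to $W\cap(\ker(L) + W^\perp)$.

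The only remaining step, and the single place where a short direct check is required, is the identification $W\cap(\ker(L) + W^\perp) = \proj_W(\ker(L))$. The inclusion $\supseteq$ is immediate, since for any $c\in\ker(L)$ we have $\proj_W(c) = c - \proj_{W^\perp}(c)\in\ker(L) + W^\perp$. For $\subseteq$, an element $c'\in W$ with $c' = c + w^\perp$ for some $c\in\ker(L)$ and $w^\perp\in W^\perp$ satisfies $\proj_W(c') = c'$, whence $c' = \proj_W(c' - w^\perp) = \proj_W(c)\in\proj_W(\ker(L))$. I do not anticipate any obstacle beyond this set-theoretic verification; the substantive content has already been absorbed into the Schur-restriction framework of Section 3.
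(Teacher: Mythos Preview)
Your proof is correct and follows the same overall reduction as the paper: both arguments use Theorem~\ref{thm:schur-persistent} and the invertibility of $\hat f_q$ to reduce the claim to the general identity $\ker(\Sch(L,W)) = \proj_W(\ker(L))$, together with $\ker(\Delta_{q,\mathrm{down}}^K) = \ker(\partial_q^K)$ and $f_q = \hat f_q\circ\proj_{\ker(f_q)^\perp}$. The only difference is how that auxiliary identity is established. The paper records it separately as Lemma~\ref{schurKernel} and proves it by a block-matrix computation, using the positive-semidefiniteness fact $(I-DD^\dagger)B^{\mathrm T}=0$ to check both inclusions explicitly. You instead stay coordinate-free: invoking Proposition~\ref{prop:schur-restriction} to write $\Sch(L,W)=g_Wg_W^*$, identifying its kernel as $(\Ima(g)\cap W)^{\perp_W}$, and then unwinding via $(A\cap B)^\perp=A^\perp+B^\perp$ and $\Ima(g)^\perp=\ker(g^*)=\ker(L)$ down to the elementary equality $W\cap(\ker(L)+W^\perp)=\proj_W(\ker(L))$. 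Both routes are sound; yours has the pleasant feature of reusing the operator-theoretic framework of Section~\ref{sec:schur} rather than dropping back to matrices, while the paper's version is slightly more self-contained in that it does not appeal to a factorization of $L$.
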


 \begin{algorithm}[htb!]
 \caption{An algorithm for matrix representation of persistent Laplacian }
 \label{algo:matrix}
 \begin{algorithmic}[1]
 \STATE \textbf{Data:} $M_{\mathcal{J}\cup \mathcal{B} \to S_q^K}, [\Delta_{q,\mathrm{down}}^K], [\Delta_{q,\mathrm{up}}^L]$ and $W_{\Ima(f_q)}$
 \STATE \textbf{Result:} $[\Delta_q^{K,L}]$
 \STATE $N := M_{\mathcal{J}\cup \mathcal{B} \to S_q^K}^{-1} [\Delta_{q,\mathrm{down}}^{K}] M_{\mathcal{J}\cup \mathcal{B} \to S_q^K}$
 \STATE $n := \dim(W_{\Ima(f_q)})$
 \STATE $[\Delta_{q,\mathrm{down}}^{K,L}] := W_{\Ima(f_q)} (N / N([n_q^K] - [n], [n_q^K]-[n])) W_{\Ima(f_q)}^{-1}$
 \STATE Form $R = \begin{pmatrix}R_1 & R_2\end{pmatrix}$ by computing $\ker([\Delta_{q,\mathrm{down}}^{K,L}])$
 \STATE Expand  matrix $R$ with the identity matrix to form  $(n_q^L \times n_q^L)$ matrix $M_{\mathcal{B}_1 \cup \mathcal{B}_2 \cup \mathcal{D} \to S_q^L}$
 \STATE $Q:= M_{\mathcal{B}_1 \cup \mathcal{B}_2 \cup \mathcal{D} \to S_q^L}^{-1} [\Delta_{q,\mathrm{down}}^L] M_{\mathcal{B}_1 \cup \mathcal{B}_2 \cup \mathcal{D} \to S_q^L}$
 \STATE $n_p :=$ the number of columns of $R_1$
 \STATE $\mathsf{SchQ}:= Q/Q([n_q^L]-[n_p],[n_q^L]-[n_p])$
 \STATE Form the $n\times n$ matrix $\mathsf{PadSchQ}$ by zero padding to $\mathsf{SchQ}$
 \STATE $[\Delta_{q,\mathrm{up}}^{K,L}] = R^{-1} \mathsf{PadSchQ} \,R$
 \STATE{\RETURN $[\Delta_{q,\mathrm{down}}^{K,L}] + [\Delta_{q,\mathrm{up}}^{K,L}]$}
\end{algorithmic}
\end{algorithm}

\subsubsection{Complexity}
With the data we started in the~\autoref{algo:matrix}, we multiply matrices of dimension $n_q^K$ and take Schur complement in a matrix of dimension $n_q^K$ in order to compute $[\Delta_{q,\mathrm{down}}^{K,L}]$. Thus, it takes $O((n_q^K)^3)$ time to compute $[\Delta_{q,\mathrm{down}}^{K,L}]$. To compute $[\Delta_{q,\mathrm{up}}^{K,L}]$, we compute kernel of a matrix of dimension $n<n_q^L$, take Schur complement in a matrix of dimension $n_q^L$, multiply matrices of dimension $n_q^L$ and of dimension $n$. Hence, it takes $O((n_q^L)^3)$ time to compute $[\Delta_{q,\mathrm{up}}^{K,L}]$. Therefore, it takes $O((n_q^K)^3) + (n_q^L)^3)$ time to compute $[\Delta_q^{K,L}]$ in total.

It is important to note that the data we started in the \autoref{algo:matrix} also takes time to compute. Starting with boundary matrices and weight matrices, it takes $O((n_q^K)^2)$ time to compute $[\Delta_{q,\mathrm{down}}^{K}]$ and it takes $O(n_{q+1}^L)$ to compute $[\Delta_{q,\mathrm{up}}^L]$ as discussed in~\cite{memoli2022persistent}. Thus, starting from scratch, \autoref{algo:matrix} computes $[\Delta_q^{K,L}]$ in $O((n_q^K)^3 + (n_q^L)^3 + n_{q+1}^L)$ time.

Note that by Theorem \ref{persBetti}, as a by-product, the above algorithm can also output the persistent Betti number for a simplicial map $f: K \to L$ in the same time complexity. This provides an alternative way to compute persistent Betti numbers for $f: K \to L$ that is different from the existing algorithm by Dey et al. \cite{dey2012computing} already in the literature.

\section{Monotonicity of (up/down) persistent eigenvalues}\label{sec:mono}

For a simplicial map $f:K\to L$, the up and down persistent Laplacians are self-adjoint positive semi-definite operators. Therefore, they have non-negative eigenvalues. We denote them by $0 \leq \lambda_{q,\mathrm{up},1}^{K,L} \leq \lambda_{q,\mathrm{up},2}^{K,L} \ldots \leq \lambda_{q,\mathrm{up},n}^{K,L}$, and $0 \leq \lambda_{q,\mathrm{down},1}^{K,L} \leq \lambda_{q,\mathrm{down},2}^{K,L} \ldots \leq \lambda_{q,\mathrm{down},n}^{K,L}$, allowing repetition, where $n = \dim(\Ima{(f_q)})$. And, we call them the \emph{up persistent eigenvalues} and the \emph{down persistent eigenvalues}.

When the simplicial maps involved are inclusions, we have the following known monotonicity result for the up persistent Laplacian.

\begin{theorem}[{\cite[Theorem 5.3]{memoli2022persistent}}]\label{thm:up-mono-inc}
Let $f : K\hookrightarrow L$ and $g : L \hookrightarrow M$ be inclusion maps for simplicial complexes $K,L$ and $M$.
Then, for any $q\in\mathbb{N}$ and $k=1,2,\ldots, n_q^K$,
\[\lambda_{q,\mathrm{up}, k}^{K,M} \geq \lambda_{q,\mathrm{up}, k}^{L,M} \text{ and }\lambda_{q,\mathrm{up}, k}^{K,M} \geq \lambda_{q,\mathrm{up}, k}^{K,L}.\]
\end{theorem}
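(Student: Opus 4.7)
The plan is to obtain both inequalities from the Schur restriction characterization of $\Delta_{q,\mathrm{up}}^{K,L}$ furnished by~\autoref{thm:schur-persistent}, combined with two general properties of Schur restriction. Write $W_K := \ker\partial_q^K$ and $W_L := \ker\partial_q^L$; via the chain maps induced by the inclusions $K \hookrightarrow L \hookrightarrow M$ these embed canonically as subspaces $W_K \subseteq W_L \subseteq C_q^M$ and $W_K \subseteq C_q^L$. By~\autoref{thm:schur-persistent}, modulo extension-by-zero along the coboundary complements $(\ker\partial_q^K)^\perp$ and $(\ker\partial_q^L)^\perp$, the three up persistent Laplacians agree with $\Sch(\Delta_{q,\mathrm{up}}^M, W_K)$, $\Sch(\Delta_{q,\mathrm{up}}^M, W_L)$, and $\Sch(\Delta_{q,\mathrm{up}}^L, W_K)$ respectively. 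A short bookkeeping argument---using that $\mathrm{rk}(\partial_q^K) \leq \mathrm{rk}(\partial_q^L)$ for inclusions---shows that the eigenvalue inequalities at index $k$ reduce to the analogous comparisons between these Schur restrictions.

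The two Schur-restriction facts I would record up front are: (a) \emph{composition}, for $W_1 \subseteq W_2 \subseteq V$,
\[
\Sch(L, W_1) \;=\; \Sch\!\bigl(\Sch(L, W_2),\; W_1\bigr),
\]
and (b) \emph{Loewner monotonicity}, if $A \succeq B \succeq 0$ on $V$, then $\Sch(A, W) \succeq \Sch(B, W)$. Both follow at once from the variational identity
\[
\langle \Sch(L, W) v, v\rangle \;=\; \min_{u \in W^\perp} \langle L(v+u), v+u\rangle,
\]
itself a direct consequence of the extremal characterization of the Schur restriction used in the proof of~\autoref{prop:schur-restriction}.

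For the first inequality, property (a) yields $\Sch(\Delta_{q,\mathrm{up}}^M, W_K) = \Sch(\Sch(\Delta_{q,\mathrm{up}}^M, W_L), W_K)$, so $\Delta_{q,\mathrm{up}}^{K,M}$ is, up to the trivial extensions, itself a Schur restriction of $\Delta_{q,\mathrm{up}}^{L,M}$ to a subspace. I then invoke the Cauchy interlacing theorem for Schur complements, $\lambda_k(M) \leq \lambda_k(M/D) \leq \lambda_{k+d}(M)$, on this pair to conclude $\lambda_{q,\mathrm{up},k}^{K,M} \geq \lambda_{q,\mathrm{up},k}^{L,M}$. For the second inequality, property (a) applied in a different direction gives $\Sch(\Delta_{q,\mathrm{up}}^M, W_K) = \Sch(\Sch(\Delta_{q,\mathrm{up}}^M, C_q^L), W_K)$, and by (b) it suffices to prove the Loewner bound $\Sch(\Delta_{q,\mathrm{up}}^M, C_q^L) \succeq \Delta_{q,\mathrm{up}}^L$ on $C_q^L$. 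By~\autoref{prop:schur-restriction}, $\Sch(\Delta_{q,\mathrm{up}}^M, C_q^L) = g\, g^\ast$, where $g$ is the restriction of $\partial_{q+1}^M$ to $(\partial_{q+1}^M)^{-1}(C_q^L)$; since $C_{q+1}^L \subseteq (\partial_{q+1}^M)^{-1}(C_q^L)$ with $g|_{C_{q+1}^L} = \partial_{q+1}^L$, adjunction yields $\|g^\ast v\|^2 \geq \|(\partial_{q+1}^L)^\ast v\|^2$ for every $v \in C_q^L$, which is precisely the desired Loewner inequality. Weyl's monotonicity theorem then completes the argument.

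The main obstacle I anticipate is bookkeeping the extension-by-zero in~\autoref{thm:schur-persistent}: the operators act on $\Ima(f_q)$ while the ``nontrivial'' action lives on the smaller subspace $f_q(\ker\partial_q^K)$, and in the first inequality the two sides contribute \emph{different} numbers of trivially-added zero eigenvalues (since in general $\mathrm{rk}(\partial_q^K) \neq \mathrm{rk}(\partial_q^L)$). Checking that Cauchy interlacing absorbs this mismatch cleanly---i.e.\ that the index shift from padding is always in the favorable direction---is the only nontrivial piece of index tracking; once set up, the remainder of the proof is a formal manipulation with Schur complements.
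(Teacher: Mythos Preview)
Your proposal is correct and, for the first inequality $\lambda_{q,\mathrm{up},k}^{K,M}\geq \lambda_{q,\mathrm{up},k}^{L,M}$, matches the paper's argument essentially verbatim: the paper proves it by first establishing the essential-eigenvalue monotonicity (\autoref{thm:mono-up-ess}) via the same Schur-complement interlacing you invoke, and then does exactly the padding-zero count you anticipate, using $\mathrm{rk}(\partial_q^K)\leq \mathrm{rk}(\partial_q^L)$ to show that $\Delta_{q,\mathrm{up}}^{L,M}$ carries at least as many inevitable zeros as $\Delta_{q,\mathrm{up}}^{K,M}$. Your properties (a) and (b), derived from the variational formula, are just a repackaging of what the paper cites as the interlacing lemma for Schur complements.

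The one substantive addition in your proposal is the second inequality $\lambda_{q,\mathrm{up},k}^{K,M}\geq \lambda_{q,\mathrm{up},k}^{K,L}$, which the paper does not reprove here (it simply defers to \cite{memoli2022persistent}). Your argument for it---factoring through $\Sch(\Delta_{q,\mathrm{up}}^M, C_q^L)=\Delta_{q,\mathrm{up}}^{L\hookrightarrow M}$, observing $C_{q+1}^L\subseteq (\partial_{q+1}^M)^{-1}(C_q^L)$ to get the Loewner bound $\Delta_{q,\mathrm{up}}^{L\hookrightarrow M}\succeq \Delta_{q,\mathrm{up}}^L$, and then applying monotonicity of $\Sch(\,\cdot\,,W_K)$---is clean and correct; note that here both operators live on $C_q^K$ with the \emph{same} padding subspace $(\ker\partial_q^K)^\perp$, so the index bookkeeping you worry about is trivial on this side.
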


In~\autoref{thm:up-mono-inc}, the monotonicity result of up persistent eigenvalues $\lambda_{q,\mathrm{up}, k}^{K,M} \geq \lambda_{q,\mathrm{up}, k}^{K,L}$ follows from the fact that $\Delta_{q,\mathrm{up}}^{K,M} \succeq \Delta_{q,\mathrm{up}}^{K,L}$. In the case of surjective maps, we present an analogous statement for the down persistent Laplacians as follows.

\begin{proposition}\label{prop:surj-mon}
Let $f: K \twoheadrightarrow{L}$ and $g: L \twoheadrightarrow M$  be weight preserving surjective simplicial maps. Then, $\Delta_{q,\mathrm{down}}^{K,M} \succeq \Delta_{q,\mathrm{down}}^{L,M}$.
\end{proposition}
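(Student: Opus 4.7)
The plan is to combine the Schur restriction characterization of the down persistent Laplacian (\autoref{thm:schur-persistent}) with the extremal (minimum) characterization of the Schur restriction (\autoref{exschur}), and then to exploit the elementary fact that a weight-preserving simplicial map is norm non-increasing on every chain space. Once these ingredients are in place the inequality reduces to a short variational comparison.

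First, I would apply \autoref{thm:schur-persistent} together with the standard identity
\[
\langle \Sch(L,W)w,\,w\rangle \;=\; \min_{u\in W^\perp}\,\langle L(w+u),\,w+u\rangle,\qquad w\in W,
\]
for a self-adjoint positive semi-definite operator $L$ on a subspace $W$ of a real inner product space. After conjugating by the isometries $\widehat{(g\circ f)}_q$ and $\hat g_q$ furnished by \autoref{lem:isometry} (which, under the surjectivity hypothesis, identify $\ker((g\circ f)_q)^\perp$ and $\ker(g_q)^\perp$ isometrically with the common space $C_q^M$), I obtain, for every $y\in C_q^M$,
\[
\langle \Delta_{q,\mathrm{down}}^{K,M}y,\,y\rangle \;=\; \min_{\substack{z\in C_q^K\\ (g\circ f)_q(z)=y}}\|\partial_q^K z\|^2,\qquad \langle \Delta_{q,\mathrm{down}}^{L,M}y,\,y\rangle \;=\; \min_{\substack{w\in C_q^L\\ g_q(w)=y}}\|\partial_q^L w\|^2.
\]
Surjectivity of $f$ and $g$ guarantees that both preimage sets are non-empty.

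The second step is the quadratic form comparison. Given any admissible $z\in C_q^K$ for the left-hand minimum, I set $w:=f_q(z)\in C_q^L$. Then $g_q(w)=(g\circ f)_q(z)=y$, so $w$ is admissible for the right-hand minimum, and the chain-map identity $\partial_q^L\circ f_q = f_{q-1}\circ\partial_q^K$ gives $\partial_q^L w = f_{q-1}(\partial_q^K z)$. Hence the inequality reduces to showing that $f_{q-1}$ is norm non-increasing, i.e.\ $\|f_{q-1}(c)\|\le\|c\|$ for every $c\in C_{q-1}^K$. This is immediate from \autoref{lem:isometry}: decomposing $c=c_0+c_1$ orthogonally with $c_0\in\ker(f_{q-1})$ and $c_1\in\ker(f_{q-1})^\perp$, one has $f_{q-1}(c)=\hat f_{q-1}(c_1)$, and since $\hat f_{q-1}$ is an isometry, $\|f_{q-1}(c)\|^2=\|c_1\|^2\le\|c_0\|^2+\|c_1\|^2=\|c\|^2$.

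Assembling these pieces, for every admissible $z$ one has $\langle \Delta_{q,\mathrm{down}}^{L,M}y,y\rangle \le \|\partial_q^L f_q(z)\|^2 = \|f_{q-1}(\partial_q^K z)\|^2 \le \|\partial_q^K z\|^2$, and taking the minimum over $z$ yields $\langle \Delta_{q,\mathrm{down}}^{L,M}y,y\rangle \le \langle \Delta_{q,\mathrm{down}}^{K,M}y,y\rangle$ for every $y\in C_q^M$, which is exactly the Loewner inequality $\Delta_{q,\mathrm{down}}^{K,M}\succeq \Delta_{q,\mathrm{down}}^{L,M}$. I do not foresee a serious obstacle in this argument; the only non-trivial bookkeeping is translating between the $\Ima$-view of the down persistent Laplacian used in \autoref{defn:downLap} and its Schur restriction form in \autoref{thm:schur-persistent}, which is handled cleanly by the two isometries from \autoref{lem:isometry}.
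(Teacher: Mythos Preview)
Your argument is correct. The variational identity $\langle \Sch(L,W)w,w\rangle=\min_{u\in W^\perp}\langle L(w+u),w+u\rangle$ that you invoke is not stated as a standalone result in the paper, but it is implicit in the proof of \autoref{prop:schur-restriction} (via \autoref{lemmaForMaximality}), and once you conjugate by the isometries $\widehat{(g\circ f)}_q$ and $\hat g_q$ your formulas for the two quadratic forms as minima over preimages are exactly right. The push-forward $z\mapsto f_q(z)$ together with the chain-map relation and the contraction property of $f_{q-1}$ (an immediate corollary of \autoref{lem:isometry}) then gives the Loewner inequality directly.

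The paper takes a different route. Rather than minimizing over preimages, it works on the $\mathfrak{C}$ spaces: it shows that $(f_{q-1})^*$ restricts to an isometric embedding $\ChDown{L}{M}\hookrightarrow\ChDown{K}{M}$, so that the map $\hat g_q\circ\delta_{q-1}^{L,M}$ is the restriction of $\widehat{(g\circ f)}_q\circ\delta_{q-1}^{K,M}$ to a subspace, and then applies the elementary \autoref{lem:extension} (if $\hat f$ extends $f$ then $\hat f\hat f^*\succeq ff^*$). Your variational argument is shorter and avoids the bookkeeping of the $\mathfrak{C}$ spaces and the commutative diagram; the paper's approach, by contrast, exhibits $\Delta_{q,\mathrm{down}}^{K,M}$ explicitly as an operator that extends $\Delta_{q,\mathrm{down}}^{L,M}$, which is a slightly stronger structural statement and feeds more directly into the interlacing arguments used later in \autoref{thm:down-mono}. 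Both arguments implicitly use that the composition of surjective weight-preserving simplicial maps is again weight-preserving, so that $\widehat{(g\circ f)}_q$ is indeed an isometry.
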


When the surjectivity assumption is removed, it is no longer guaranteed that the composition of two weight preserving maps is weight preserving, see~\autoref{ex:compositionNotWP}. However, under the assumption that two maps and their composition are weight preserving, we get the monotonicity of the down persistent eigenvalues.

\begin{theorem}\label{thm:down-mono}
Let $f : K\to L$ and $g : L \to M$ be weight preserving simplicial maps and assume that $g\circ f:K\rightarrow M$ is also weight preserving. Then, for any $q\in\mathbb{N}$ and $k=1,2,\ldots,\dim(\Ima{(g_q \circ f_q)})$, 
\[\lambda_{q,\mathrm{down}, k}^{K,M} \geq \lambda_{q,\mathrm{down}, k}^{L,M} \text{ and }\lambda_{q,\mathrm{down}, k}^{K,M} \geq \lambda_{q,\mathrm{down}, k}^{K,L}.\]
\end{theorem}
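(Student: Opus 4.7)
The plan is to prove the two inequalities separately, both resting on the identification of down persistent Laplacians as Schur restrictions provided by~\autoref{thm:schur-persistent}. Two operator-theoretic facts about Schur restrictions will be used: (a) transitivity, $\Sch(L,W_1)=\Sch(\Sch(L,W_2),W_1)$ whenever $W_1\subseteq W_2\subseteq V$, and (b) a Poincar\'e-type interlacing $\lambda_k(\Sch(L,W))\geq \lambda_k(L)$ for $k=1,\ldots,\dim W$. I would derive (a) from the variational characterization $\langle \Sch(L,W)w,w\rangle=\min_{v\in W^\perp}\langle L(w+v),w+v\rangle$ combined with the orthogonal decomposition $W_1^\perp=(W_1^\perp\cap W_2)\oplus W_2^\perp$. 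For (b), the Courant--Fischer principle applied to the affine embedding $w\mapsto(w,-D^\dagger B^\ast w)$ (arising from the Schur block decomposition of $L$ relative to $W\oplus W^\perp$) sends any $k$-dimensional subspace $U\subseteq W$ to a $k$-dimensional subspace of $V$ on which the Rayleigh quotient of $L$ is at most that of $\Sch(L,W)$ on $U$, which gives $\lambda_k(L)\leq \lambda_k(\Sch(L,W))$.

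For the second inequality $\lambda_{q,\mathrm{down},k}^{K,M}\geq \lambda_{q,\mathrm{down},k}^{K,L}$, I will work in the alternative formulation of~\autoref{rem:alternative-defn}. Because $\ker(f_q)\subseteq \ker((g\circ f)_q)$, we have the nested inclusion $\ker((g\circ f)_q)^\perp\subseteq \ker(f_q)^\perp\subseteq C_q^K$, and~\autoref{thm:schur-persistent} identifies both $\Delta_{q,\mathrm{down}}^{K,L}$ and $\Delta_{q,\mathrm{down}}^{K,M}$ as Schur restrictions of the common ambient operator $\Delta_{q,\mathrm{down}}^K$ onto these nested subspaces. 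Fact (a) then gives $\Delta_{q,\mathrm{down}}^{K,M}=\Sch(\Delta_{q,\mathrm{down}}^{K,L},\ker((g\circ f)_q)^\perp)$, and fact (b) delivers the inequality in the stated range $k\leq \dim\Ima((g\circ f)_q)$.

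For the first inequality $\lambda_{q,\mathrm{down},k}^{K,M}\geq \lambda_{q,\mathrm{down},k}^{L,M}$, I switch to the $\Ima$-formulation, which places both operators inside the common ambient $C_q^M$ with $\Ima((g\circ f)_q)\subseteq \Ima(g_q)$. Unwinding~\autoref{thm:schur-persistent} through the variational characterization of the Schur restriction and the isometry property of $\widehat{(g\circ f)}_q$ and $\hat{g}_q$ yields the intrinsic identities
\[
\langle \Delta_{q,\mathrm{down}}^{K,M}\gamma,\gamma\rangle=\min_{c\in C_q^K:\,(g\circ f)_q(c)=\gamma}\|\partial_q^K c\|^2,\quad \langle \Delta_{q,\mathrm{down}}^{L,M}\gamma,\gamma\rangle=\min_{w\in C_q^L:\,g_q(w)=\gamma}\|\partial_q^L w\|^2.
\]
Given any $c$ admissible for the first minimum, the element $w:=f_q(c)$ is admissible for the second; the chain-map identity $\partial_q^L\circ f_q=f_{q-1}\circ\partial_q^K$ combined with the contraction bound $\|f_{q-1}(\cdot)\|\leq\|\cdot\|$ (a direct consequence of $\hat{f}_{q-1}$ being an isometry on $\ker(f_{q-1})^\perp$, i.e.\ weight-preservation of $f$ in dimension $q-1$) then gives $\|\partial_q^L w\|\leq\|\partial_q^K c\|$. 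Minimizing over $c$ yields $\Delta_{q,\mathrm{down}}^{K,M}\succeq \Delta_{q,\mathrm{down}}^{L,M}|_{\Ima((g\circ f)_q)}$ as quadratic forms on $\Ima((g\circ f)_q)$. Monotonicity of eigenvalues under the L\"owner order, followed by Cauchy interlacing for the principal restriction from $\Ima(g_q)$ to $\Ima((g\circ f)_q)$, finishes the argument.

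The main obstacle will be to establish (a) and (b) cleanly at the level of abstract self-adjoint PSD operators rather than by matrix manipulations; however, both flow directly from the variational characterization that is already central to~\autoref{sec:schur}, so the work required is essentially bookkeeping. The weight-preservation hypothesis on $g\circ f$ is needed in order for $\Delta_{q,\mathrm{down}}^{K,M}$ to be defined in the first place, while the weight-preservation of $f$ in dimension $q-1$ enters only through the contraction bound used in the first inequality.
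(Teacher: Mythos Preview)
Your proof is correct. For the second inequality $\lambda_{q,\mathrm{down},k}^{K,M}\geq \lambda_{q,\mathrm{down},k}^{K,L}$ you do exactly what the paper does: identify both operators as nested Schur restrictions of $\Delta_{q,\mathrm{down}}^K$ onto $\ker((g\circ f)_q)^\perp\subseteq \ker(f_q)^\perp$ and invoke eigenvalue interlacing for Schur complements (the paper cites this as \cite[Lemma 4.5]{memoli2022persistent} rather than deriving (a) and (b) from scratch, but the content is the same).

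For the first inequality your route genuinely diverges from the paper's. The paper first passes to the surjective situation $K\twoheadrightarrow\Ima(f)\twoheadrightarrow\Ima(g\circ f)$ via \autoref{rmk:image}, applies \autoref{prop:surj-mon} to obtain $\Delta_{q,\mathrm{down}}^{K,M}\succeq \Delta_{q,\mathrm{down}}^{\Ima(f),\Ima(g\circ f)}$, and then uses the weight-preservation of $f$, $g$, and $g\circ f$ to argue that $\Delta_{q,\mathrm{down}}^{\Ima(f),\Ima(g\circ f)}$ is a principal submatrix of $\Delta_{q,\mathrm{down}}^{L,M}$, finishing by Cauchy interlacing. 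Your argument bypasses \autoref{prop:surj-mon} entirely: the energy identity $\langle \Delta_{q,\mathrm{down}}^{K,M}\gamma,\gamma\rangle=\min_{(g\circ f)_q(c)=\gamma}\|\partial_q^K c\|^2$ together with the contraction $\|f_{q-1}(\cdot)\|\leq\|\cdot\|$ gives the quadratic-form domination on $\Ima((g\circ f)_q)$ in one shot, and then the same Cauchy-interlacing step closes the argument. Your approach is more self-contained and makes the role of the hypotheses more transparent (only the isometry of $\widehat{(g\circ f)}_q$ and the contraction in degree $q-1$ are used). The paper's approach has the merit of isolating \autoref{prop:surj-mon} as a separate Loewner-order statement of independent interest and of making explicit the combinatorial fact that simplices of $L$ outside $\Ima(f)$ cannot land in $\Ima(g\circ f)$ under $g$.
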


However, this type of monotonicity does not hold in general for up persistent eigenvalues even if we require weight preserving conditions for the involved simplicial maps as we did in \autoref{thm:down-mono}. See the counterexample as follows.

\begin{figure}[htb!]
    \centering
    \includegraphics{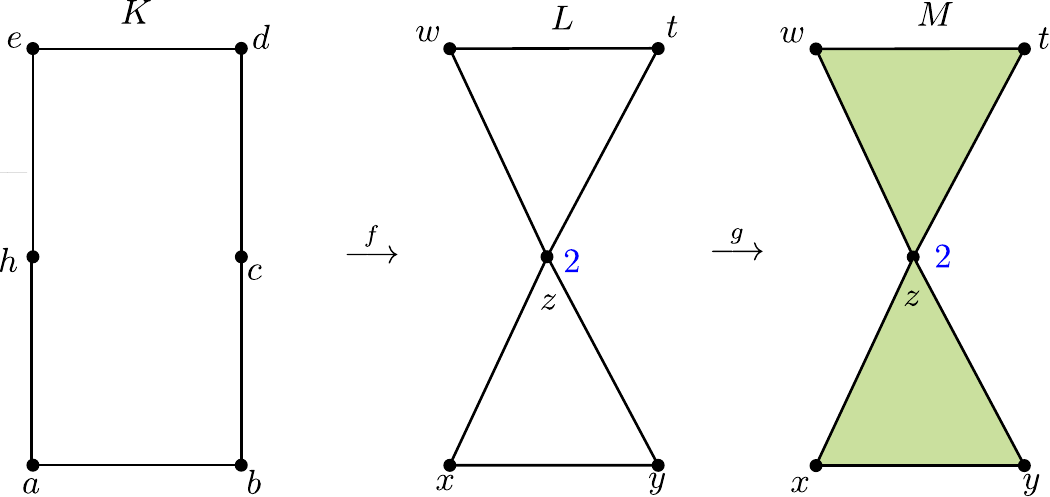}
    \caption{Composition of two weight preserving simplicial maps $f: K\to L$ and $g: L\to M$, where $f$ is given by collapsing the vertices $h$ and $c$ to the same vertex $z$. And, $g$ is given by the identity map on the vertices.} 
    \label{fig:ex3}
\end{figure}

\begin{example}[Up persistent eigenvalues are not monotonic]\label{ex:not monotonic}
Considering the simplicial complexes $K$, $L$, $M$ and the simplicial maps $f$, $g$ depicted in~\autoref{fig:ex3}, we compute spectra of $\Delta_{1,\mathrm{up}}^{K,M}$ and $\Delta_{1,\mathrm{up}}^{L,M}$. It turns out that $\Delta_{1,\mathrm{up}}^{K,M}$ has eigenvalues $0 \leq 0 \leq 0\leq 0\leq 0\leq 3$ and $\Delta_{1,\mathrm{up}}^{L,M}$ has eigenvalues $0 \leq 0 \leq 0\leq 0\leq 3\leq 3$. So, $0=\lambda_{1,\mathrm{up},5}^{K,M} \ngeq \lambda_{1,\mathrm{up},5}^{L,M} =3$.

\end{example}

Recall from \autoref{thm:schur-persistent} that $\Delta_{q,\mathrm{up}}^{K,L} = \iota_{\Ima{(f_q)}} \circ \Sch(\Delta_{q,  \mathrm{up}}^L, f_q(\ker(\partial_q^K))) \circ \proj_{f_q(\ker(\partial_q^K))}$.
This formulation reveals that the up persistent Laplacian is obtained by extending the operator $\Sch(\Delta_{q,  \mathrm{up}}^L, f_q(\ker(\partial_q^K)))$ defined on $f_q(\ker(\partial_q^K))$ to its superspace $\Ima{(f_q)}$ by ``padding zeros''.
This extension naturally introduces inevitable $0$ eigenvalues to the up persistent Laplacian and we call them inevitable $0$ eigenvalues.
Considering again \autoref{ex:not monotonic}, we see that $g_1(f_1(\ker(\partial_1^K)))$ has dimension $1$ and codimension $5$ inside $\Ima{(g_1\circ f_1)}$. 
Thus, $\Delta_{1,\mathrm{up}}^{K,M} $ has $5$ inevitable $0$ eigenvalues. Similarly, $\Delta_{1,\mathrm{up}}^{L,M} $ has $4$ inevitable $0$ eigenvalues as the codimension of $g_1(\ker(\partial_1^L))$ inside $\Ima{(g_1)}$ is $4$. Disregarding these inevitable $0$ eigenvalues from their spectra, we see that $\Delta_{1\mathrm{up}}^{K,M}$ essentially has $\{3 \}$ as its spectrum, while $\Delta_{1,\mathrm{up}}^{L,M}$ essentially has $\{3, 3\}$ as its spectrum. Then, it seems that if we disregard inevitable $0$ eigenvalues, we will obtain monotonicity for the eigenvalues of up persistent Laplacians. This is indeed the case:

We call $\Sch(\Delta_{q,  \mathrm{up}}^L, f_q(\ker(\partial_q^K)))$ the \emph{essential up persistent Laplacian}, whose spectrum is the same as the spectrum of $\Delta_{q,\mathrm{up}}^{K,L}$ up to a difference in the multiplicity of the 0 eigenvalue.
Then, we establish monotonicity of the eigenvalues of the essential up persistent Laplacian, which are denoted by $\lambda_{q,\mathrm{up},k}^{K,L,\mathrm{ess}}$, and are called \emph{essential up persistent eigenvalues}.

\begin{theorem}\label{thm:mono-up-ess}
Let $f: K\to L$ and $g: L\to M$ be weight preserving simplicial maps. Then,  for any $q\in\mathbb{N}$ and $k=1,2,\ldots, \dim(g_q(f_q(\ker(\partial_q^K))))$, we have $\lambda_{q,\mathrm{up},k}^{K,M,\mathrm{ess}}\geq \lambda_{q,\mathrm{up},k}^{L,M,\mathrm{ess}}$.
\end{theorem}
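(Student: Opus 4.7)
The plan is to express both essential up persistent Laplacians as Schur restrictions of a single ambient operator to nested subspaces, then invoke an eigenvalue interlacing principle for Schur complements.

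Since $f$ is a simplicial map, $f_q$ is a chain map, so $f_q(\ker(\partial_q^K))\subseteq \ker(\partial_q^L)$. Applying $g_q$ yields the nested inclusion
\[ W_1 := g_q(f_q(\ker(\partial_q^K))) \subseteq g_q(\ker(\partial_q^L)) =: W_2 \subseteq C_q^M. \]
By definition, the $\lambda_{q,\mathrm{up},k}^{K,M,\mathrm{ess}}$ are the ordered eigenvalues of $\Sch(\Delta_{q,\mathrm{up}}^M, W_1)$ and the $\lambda_{q,\mathrm{up},k}^{L,M,\mathrm{ess}}$ are those of $\Sch(\Delta_{q,\mathrm{up}}^M, W_2)$. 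I would first establish a chain rule
\[ \Sch(L, W_1) = \Sch(\Sch(L, W_2), W_1) \]
for nested $W_1 \subseteq W_2 \subseteq V$ and any self-adjoint PSD operator $L$ on $V$; this is immediate from the adjoint/extremal characterization \autoref{exschur}, since iterating two right-adjoint restrictions equals the right-adjoint restriction along the composed inclusion. Setting $A := \Sch(\Delta_{q,\mathrm{up}}^M, W_2)$, the problem reduces to comparing eigenvalues of $\Sch(A, W_1)$ against those of $A$ itself.

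The core step is then the Thompson--Haynsworth-type inequality $\lambda_k(A) \leq \lambda_k(\Sch(A, W_1))$ for $k \leq \dim W_1$, which I would prove via Courant--Fischer. For each $k$-dimensional subspace $U \subseteq W_1$, lift each $u \in U$ to $\tilde u := u + w^*(u) \in W_2$, where $w^*(u) \in W_1^\perp \cap W_2$ is the unique minimizer of $w \mapsto \langle A(u+w), u+w\rangle$ (well-defined through the Moore--Penrose pseudoinverse of the relevant block of $A$). The map $u \mapsto \tilde u$ is linear and injective (its $W_1$-projection is the identity), so $\tilde U := \{\tilde u : u \in U\}$ is a $k$-dimensional subspace of $W_2$. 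The quadratic-form identity $\langle A\tilde u, \tilde u\rangle = \langle \Sch(A, W_1) u, u\rangle$ combined with $\|\tilde u\|^2 \geq \|u\|^2$ forces $\langle A\tilde u, \tilde u\rangle / \|\tilde u\|^2 \leq \langle \Sch(A, W_1) u, u\rangle / \|u\|^2$. Applying Courant--Fischer, minimizing over $\tilde U \subseteq W_2$ on the left and over $U \subseteq W_1$ on the right yields $\lambda_k(A) \leq \lambda_k(\Sch(A, W_1))$, and combining with the chain rule delivers the claimed monotonicity.

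The main obstacle lies in handling the \emph{generalized} Schur complement: although classical Thompson interlacing is standard for principal submatrices with invertible pivot block, here the pivot may be singular, so one must carefully invoke the Moore--Penrose pseudoinverse, verify that $w^*(u)$ still depends linearly on $u$, and check that the quadratic-form identity persists. The chain rule, by contrast, is a clean categorical statement following from the functorial/adjoint viewpoint developed in \autoref{sec:schur}.
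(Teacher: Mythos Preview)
Your proposal is correct and follows essentially the same route as the paper: both identify the essential up persistent Laplacians as $\Sch(\Delta_{q,\mathrm{up}}^M, W_1)$ and $\Sch(\Delta_{q,\mathrm{up}}^M, W_2)$ for the nested subspaces $W_1=g_q(f_q(\ker\partial_q^K))\subseteq W_2=g_q(\ker\partial_q^L)$ and then appeal to eigenvalue interlacing for Schur restrictions to nested subspaces. The only difference is that the paper cites this interlacing as \cite[Lemma~4.5]{memoli2022persistent}, while you supply a self-contained proof of it via the quotient (chain) rule and a Courant--Fischer lifting argument; this is sound and a pleasant elaboration, but not a genuinely different approach.
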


This monotonicity result on essential up persistent eigenvalues is stronger than the monotonicity result for inclusion maps (cf.~\autoref{thm:up-mono-inc}) in that the latter is a direct consequence of the former.

\section{Discussion}

Once an invariant is associated to a simplicial filtration/tower, one of the most natural questions would be about its stability. So, it is highly desirable to explore the stability of the (up/down) persistent eigenvalues/eigenspaces that could potentially generalize the stability of up persistent eigenvalues in the inclusion-based persistent Laplacian~\cite[Theorem 5.10]{memoli2022persistent}.

The persistent diagram of a Rips complex can be approximated by using simplicial towers obtained from the Rips complex such as sparsified Rips complex or graph induced complex as described in~\cite{dey2013graph, dey2012computing}. Therefore, one might consider if the spectrum of the (up/down) persistent Laplacian can also be approximated via a similar sparsification process.

\bibliography{ref}

\appendix

\section{Details for Section \ref{sec:basics-and-defns}}
\begin{proof}[Proof of Lemma \ref{lem:isometry}]
Let $T_q^f$ be the subspace of $C_q^K$ generated by $\mathcal{J}$. Observe that $\ker(f_q)$ is generated by 
\[\{ \sgn_{f_q}(\sigma) [\sigma] - \sgn_{f_q}(\sigma') [\sigma'] \mid f_q([\sigma]) = f_q([\sigma']) \neq 0\} \cup \{ [\sigma] \in S_q^K \mid f_q([\sigma]) = 0 \}.\]
Therefore, if $[\tau] \in \Ima(f_q)$ is a $q$-simplex in $L$ and $f_q([\sigma]) = f_q([\sigma']) =[\tau]$ for some $\sigma, \sigma' \in S_q^K$, then
\[
\langle c_q^\tau,  \sgn_{f_q}(\sigma) [\sigma] - \sgn_{f_q}(\sigma')[\sigma'] \rangle = w_q^K(\sigma) \langle [\sigma], [\sigma] \rangle - w_q^K (\sigma') \langle  [\sigma'], [\sigma'] \rangle = 1-1 = 0.
\]
Similarly, we get $0$ as the result of inner product of $c_q^\tau$ with other generators of the space $\ker(f_q)$. It then follows that $T_q^f \subseteq \ker(f_q)^\perp$. 
\begin{observation}\label{obs:orthogonal-basis}
$\mathcal{J} = \{ c_q^{\tau}  \mid  [\tau]\in \Ima(f_q) \}$ is an orthogonal basis for $T_q^f$, and $f_q(\mathcal{J}) = \{f_q(c_q^\tau) \mid [\tau]\in \Ima(f_q)\}$ is an orthogonal basis for $\Ima(f_q)$
\end{observation}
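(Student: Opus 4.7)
The plan is to prove both halves of Observation \ref{obs:orthogonal-basis} by direct, bare-hands computation using the definitions of $c_q^\tau$, of the inner product $\langle\cdot,\cdot\rangle_{w_q^K}$, and the weight-preserving hypothesis on $f$.

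First I would address the claim that $\mathcal{J}$ is an orthogonal basis of $T_q^f$. By the definition preceding the observation, $T_q^f$ is spanned by $\mathcal{J}$, so it suffices to establish pairwise orthogonality (together with the fact that each $c_q^\tau$ is nonzero, which is immediate since $w_q^K>0$ and the defining sum is nonempty whenever $[\tau]\in \Ima(f_q)$). The key combinatorial observation is that each oriented $q$-simplex $[\sigma]\in \mathcal{S}_q^K$ maps to at most one $\pm[\tau]$ under $f_q$; therefore, for distinct $[\tau],[\tau']\in \Ima(f_q)$, the supports of $c_q^\tau$ and $c_q^{\tau'}$ with respect to the canonical basis $\mathcal{S}_q^K$ are disjoint. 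Since distinct canonical basis elements are orthogonal under $\langle\cdot,\cdot\rangle_{w_q^K}$, this forces $\langle c_q^\tau, c_q^{\tau'}\rangle_{w_q^K}=0$. Orthogonality together with nonvanishing yields linear independence, and combined with spanning we get that $\mathcal{J}$ is an orthogonal basis of $T_q^f$.

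Next I would compute $f_q(c_q^\tau)$ explicitly. By linearity,
\[
f_q(c_q^\tau)=\sum_{\substack{\sigma\in S_q^K\\ f_q([\sigma])=\pm[\tau]}}\sgn_{f_q}(\sigma)\,w_q^K(\sigma)\,f_q([\sigma]).
\]
By the very definition of $\sgn_{f_q}$, we have $f_q([\sigma])=\sgn_{f_q}(\sigma)[\tau]$ whenever $f_q([\sigma])=\pm[\tau]$, so $\sgn_{f_q}(\sigma)\,f_q([\sigma])=[\tau]$. Pulling the constant vector $[\tau]$ outside the sum and invoking the weight-preserving hypothesis gives
\[
f_q(c_q^\tau)=\Bigl(\sum_{\substack{\sigma\in S_q^K\\ f_q([\sigma])=\pm[\tau]}}w_q^K(\sigma)\Bigr)[\tau]=w_q^L(\tau)\,[\tau].
\]

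Finally, as $[\tau]$ ranges over $\Ima(f_q)$, the collection $\{[\tau]\}_{[\tau]\in\Ima(f_q)}$ is by definition an orthogonal basis of $\Ima(f_q)\subseteq C_q^L$ for $\langle\cdot,\cdot\rangle_{w_q^L}$. Since $w_q^L(\tau)>0$, rescaling each basis vector by the nonzero scalar $w_q^L(\tau)$ preserves the property of being an orthogonal basis, so $f_q(\mathcal{J})=\{w_q^L(\tau)[\tau]:[\tau]\in\Ima(f_q)\}$ is an orthogonal basis of $\Ima(f_q)$, completing the proof. There is no serious obstacle here: the only nontrivial input beyond bookkeeping is the weight-preserving hypothesis, which is used precisely to collapse the sum of $w_q^K(\sigma)$ into $w_q^L(\tau)$ in the second half.
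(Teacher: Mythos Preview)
Your proof is correct and follows essentially the same approach as the paper's: both argue orthogonality of $\mathcal{J}$ via disjointness of supports in $\mathcal{S}_q^K$, and both obtain the second half by computing $f_q(c_q^\tau)=w_q^L(\tau)[\tau]$ and recognizing this as a nonzero rescaling of the canonical basis of $\Ima(f_q)$. One small remark: for the bare statement of the observation the weight-preserving hypothesis is not actually needed, since $f_q(c_q^\tau)=\bigl(\sum_\sigma w_q^K(\sigma)\bigr)[\tau]$ is already a positive multiple of $[\tau]$ regardless; the hypothesis only pins down the scalar as $w_q^L(\tau)$, which is what the surrounding Lemma~\ref{lem:isometry} needs for the isometry claim.
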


\begin{proof}
Let $[\tau_1], [\tau_2] \in \Ima(f_q)$. If $f_q([\sigma_1]) = [\tau_1] \neq [\tau_2] = f_q([\sigma_2])$, then $\sigma_1 \neq \sigma_2$. Thus, $\langle [\sigma_1], [\sigma_2] \rangle_{w_q^K} =0$. Thus, $\langle c_q^{\tau_1}, c_q^{\tau_2} \rangle_{w_q^K} = 0$. Thus, $\{ c_q^{\tau}  \mid  [\tau]\in \Ima(f_q) \}$ is an orthogonal basis for $T_q^f$. And we have that $f_q(c_q^{\tau}) = w_q^L(\tau)[\tau]$. Thus, $\{f_q(c_q^\tau) \mid [\tau]\in \Ima(f_q)\}$ is a scaling of the canonical basis of $\Ima(f_q)$. Therefore, it is a orthogonal basis for $\Ima(f_q)$
\end{proof}
By the~\autoref{obs:orthogonal-basis}, we have that $f_q$ restricted to $T_q^f$ is a linear isomorphism $f_q|_{T_q^f} : T_q^f \to \Ima(f_q)$. This implies that $\dim(T_q^f) = \dim(\Ima{(f_q)}) = \dim(\ker(f_q)^\perp)$. Hence we have that $T_q^f = \ker(f_q)^\perp$. Thus $\mathcal{J}$ is an orthogonal basis for $\ker(f_q)^\perp$.

As the set $\mathcal{J} = \{ c_q^\tau \mid \tau \in \Ima(f_q) \}$ is an orthogonal basis for $T_q^f = \ker(f_q)^\perp$ and $\{f_q(c_q^\tau) \mid \tau\in \Ima(f_q)\}$ is an orthogonal basis for $\Ima(f_q)$, in order to see that $\hat{f}_q$ preserves inner product, it is enough that check that $\langle c_q^\tau , c_q^{\tau} \rangle_{w_q^K} = \langle f_q(c_q^\tau), f_q(c_q^{\tau}) \rangle_{w_q^L}$ for every $[\tau] \in \Ima(f_q)$. Let $c_q^\tau = \sum_{i=1}^l \sgn_{f_q}(\sigma_i) w_q^K(\sigma_i) [\sigma_i]$ for some $[\tau] \in \Ima(f_q)$. By the assumption that $f$ is weight preserving, we have that $\sum_{i=1}^l w_q^K(\sigma_i) = w_q^L(\tau)$ and $f_q(c_q^\tau) = w_q^L(\tau) [\tau]$. Then, it follows that
\[
        \langle c_q^\tau , c_q^{\tau} \rangle_{w_q^K} = \sum_{i=1}^l w_q^K(\sigma_i) = w_q^L(\tau) = \langle w_q^L(\tau) [\tau], w_q^L(\tau) [\tau] \rangle_{w_q^L} = \langle f_q(c_q^\tau), f_q(c_q^\tau) \rangle_{w_q^L}.
\]
This completes the proof. 
\end{proof}

\subsection{A cochain formulation of the persistent Laplacian}\label{sec:cochain laplacian}

Recall from \autoref{rmk:cochain laplacian} that
\[\mathfrak{C}_{L\leftarrow K}^{q+1} := \{ c\in C^{q+1}_L \mid (\delta_{q}^L)^*(c) \in (f^q)^*(\ker((\delta_{q-1}^K)^*))\},\]
\[\mathfrak{C}_{K\rightarrow L}^{q-1} := \{c\in C^{q-1}_K \mid \delta^K_{q-1}(c) \in \ker((f^q)^*)^\perp \}.\]

Similarly as in \autoref{lem:isometry},
the restriction of $f^q$ onto $\ker(f^q)^\perp$ gives rise to an isometry $\hat{f}_q : \ker(f^q)^\perp \to \Ima(f^q)$.

Let $\delta^{q-1}_{K,L}$ denote the restriction of $\delta^{q-1}_K$ to $\mathfrak{C}_{K\rightarrow L}^{q-1}$. Let $\partial^{q+1}_{L,K}$ denote the restriction of $(\delta_{q}^L)^*$ to $\mathfrak{C}_{L\leftarrow K}^{q+1} $.

We then draw the following diagram dual to the one on page \pageref{tik:laplacian}.

\begin{center}
\begin{tikzcd}
                                                                                &  & C^q_K                                                                                                                   &  & C^{q-1}_K                                                                        \\
                                                                                &  & \Ima(f^q) \arrow[u, hook] \arrow[dd, "(\hat{f}^q)^{-1}"', bend right] \arrow[rr, red, "{(\delta^{q-1}_{K,L})^*}", bend left]          &  & {\mathfrak{C}_{K\rightarrow L}^{q-1}} \arrow[u, hook] \arrow[ll, red, "{\delta^{q-1}_{K,L}}"', bend left] \\
                                                                                &  &                                                                                                                         &  &                                                                                  \\
{\mathfrak{C}_{L\leftarrow K}^{q+1}} \arrow[rr, blue, "{\partial^{q+1}_{L,K}}" ', bend left] \arrow[d, hook] &  & \ker(f^q)^\perp \arrow[uu, "\hat{f}^q"', bend right] \arrow[ll, blue, "{(\partial^{q+1}_{L,K})^*}", bend left] \arrow[d, hook] &  &                                                                                  \\
C^{q+1}_L                                                                       &  & C^q_L                                                                                                                   &  &                                                                                 
\end{tikzcd}
\end{center}

Then, based on this diagram, we define as follows certain operators like our (up/down) persistent Laplacians defined in \autoref{sec:perslap for simplicial}.
\begin{align*}
    \Delta^{q,\mathrm{up}}_{K\stackrel{f}{\to} L} :=& \partial^{q+1}_{L,K}\circ (\partial^{q+1}_{L,K})^*:\ker(f^q)^\perp\rightarrow \ker(f^q)^\perp, \\
    \Delta^{q,\mathrm{down}}_{K\stackrel{f}{\to} L} :=& (\hat{f}^q)^{-1} \circ \delta^{q-1}_{K,L} \circ (\delta^{q-1}_{K,L})^* \circ \hat{f}^q:\ker(f^q)^\perp\rightarrow \ker(f^q)^\perp, \\
    \Delta^{q}_{K\stackrel{f}{\to} L}:=&\Delta^{q,\mathrm{up}}_{K\stackrel{f}{\to} L}+\Delta^{q,\mathrm{down}}_{K\stackrel{f}{\to} L}:\ker(f^q)^\perp\rightarrow \ker(f^q)^\perp.
\end{align*}

It turns out that the operators defined above are the same as (up/down) persistent Laplacians up to certain isometry. We specify this point more rigorously as follows.

Recall the isometry $j_q^K:C_q^K\rightarrow C^q_K$ between the chain group and the cochain group. 
\begin{lemma}\label{lm:im ker commute}
The restriction of $j_q^K$ onto $\ker(f_q)^\perp$ gives rise to an isometry $\hat{j}_q^K:\ker(f_q)^\perp\rightarrow\Ima(f^q)$. Similarly, the restriction of $j_q^L$ onto $\Ima(f_q)$ gives rise to an isometry $\hat{j}_q^L:\Ima(f_q)\rightarrow\ker(f^q)^\perp$. Moreover, the following diagram commutes
\begin{center}
\begin{tikzcd}

\Ima(f_q) \arrow[r, "(\hat{f}_q)^*"]\arrow[d,"\hat{j}_q^L"'] &  \ker(f_q)^\perp\arrow[d,"\hat{j}_{q}^K"]\\
\ker(f^q)^\perp\arrow[r, "\hat{f}^q"]  & \Ima(f^q)
  
\end{tikzcd}.
\end{center}
\end{lemma}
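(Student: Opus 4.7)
The plan is to derive everything from one commutative square at the level of the full chain and cochain spaces, and then restrict/corestrict to obtain the stated isometries and the commutativity of the small square.

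First I would establish the ``big'' commutative square
\begin{equation*}
j_q^K\circ (f_q)^\ast \;=\; f^q\circ j_q^L : C_q^L\longrightarrow C_K^q,
\end{equation*}
by the same elementary check used in \autoref{sec:basics} to identify $(\partial_{q+1}^K)^\ast$ with $\delta_K^q$: for any $d\in C_q^L$ and $c\in C_q^K$, both sides evaluated at $c$ give
$\langle (f_q)^\ast(d),c\rangle_{w_q^K}=\langle d,f_q(c)\rangle_{w_q^L}=j_q^L(d)(f_q(c))=f^q(j_q^L(d))(c).$
From this square I can read off the two needed images. On the one hand, using the standard identity $\Ima((f_q)^\ast)=\ker(f_q)^\perp$ together with the fact that $j_q^L$ is an isomorphism,
\begin{equation*}
j_q^K(\ker(f_q)^\perp)=j_q^K(\Ima((f_q)^\ast))=\Ima(f^q\circ j_q^L)=\Ima(f^q),
\end{equation*}
so the corestriction $\hat{j}_q^K$ is well-defined and, being a restriction of the isometry $j_q^K$, is an isometric isomorphism onto $\Ima(f^q)$. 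On the other hand, the same square yields $\ker(f^q)=j_q^L(\ker((f_q)^\ast))$; since $j_q^L$ is an isometry it preserves orthogonal complements, hence
\begin{equation*}
\ker(f^q)^\perp = j_q^L\bigl(\ker((f_q)^\ast)^\perp\bigr) = j_q^L(\Ima(f_q)),
\end{equation*}
showing that $\hat{j}_q^L$ is an isometric isomorphism onto $\ker(f^q)^\perp$.

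For the commutativity of the small square I would restrict the big one to $\Ima(f_q)$. The key observation is that $(\hat{f}_q)^\ast$ coincides with $(f_q)^\ast$ on $\Ima(f_q)$: indeed, $(f_q)^\ast$ lands in $\Ima((f_q)^\ast)=\ker(f_q)^\perp$, and for any $y\in\Ima(f_q)$ and $x\in\ker(f_q)^\perp$ one has $\langle (f_q)^\ast(y),x\rangle_{w_q^K}=\langle y,f_q(x)\rangle_{w_q^L}=\langle y,\hat{f}_q(x)\rangle_{w_q^L}$, which is the defining property of $(\hat{f}_q)^\ast(y)$. Dually, $\hat{f}^q$ is exactly the restriction of $f^q$ to $\ker(f^q)^\perp$ (corestricted to $\Ima(f^q)$). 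Therefore, applying the big square identity to an arbitrary $y\in\Ima(f_q)\subseteq C_q^L$ gives
\begin{equation*}
\hat{j}_q^K\bigl((\hat{f}_q)^\ast(y)\bigr)=j_q^K((f_q)^\ast(y))=f^q(j_q^L(y))=\hat{f}^q\bigl(\hat{j}_q^L(y)\bigr),
\end{equation*}
which is exactly the required commutativity.

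The main obstacle is not any single computation but rather the bookkeeping: one must check that the restricted-corestricted maps $\hat{f}_q$, $\hat{f}^q$, $\hat{j}_q^K$, $\hat{j}_q^L$ really are isometric isomorphisms between the claimed subspaces and that their (inverse/adjoint) relations are consistent. Once the big square is in place, the identification $\Ima((f_q)^\ast)=\ker(f_q)^\perp$ and the isometry property of the $j$'s make the rest essentially automatic, and the weight-preserving hypothesis on $f$ is used only through \autoref{lem:isometry}, which guarantees that $\hat{f}_q$ (and by the same argument $\hat{f}^q$) is an isometry so that $(\hat{f}_q)^{-1}=(\hat{f}_q)^\ast$.
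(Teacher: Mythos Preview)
Your proposal is correct and follows essentially the same approach as the paper's own proof: the paper simply records the ``big'' commutative square $j_q^K\circ (f_q)^\ast = f^q\circ j_q^L$ together with the identities $\ker(f_q)^\perp=\Ima((f_q)^\ast)$ and $\Ima(f_q)=\ker((f_q)^\ast)^\perp$, and declares the lemma to follow. You have filled in exactly the details the paper leaves implicit (the elementary verification of the square, the image/kernel computations, and the identification of the restricted maps), so there is no substantive difference in strategy.
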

\begin{proof}
This simply follows from the fact that the following diagram commutes
\begin{center}
\begin{tikzcd}

C^K_q \arrow[d,"j_q^K"'] & \arrow[l, "(f_q)^*"'] C^L_{q}\arrow[d,"j_{q}^L"]\\
C^q_K & \arrow[l, "f^q"'] C^{q}_L
  
\end{tikzcd}.
\end{center}
and that $\ker(f_q)^\perp=\Ima((f_q)^*)$ and $\Ima(f_q)=\ker((f_q)^*)^\perp$.
\end{proof}

Then, we have the following result which basically states that (up/down) persistent Laplacians can be constructed either via chains or via cochains and the two types of constructions are dual with each other.
\begin{theorem}
 For any $q\in\mathbb{N}$, we have that
 \[\hat{j}_q^L\circ\Delta_{q,\mathrm{up}}^{K\stackrel{f}{\to} L}= \Delta^{q,\mathrm{up}}_{K\stackrel{f}{\to} L}\circ \hat{j}_q^L,\,\,\hat{j}_q^L\circ\Delta_{q,\mathrm{down}}^{K\stackrel{f}{\to} L}= \Delta^{q,\mathrm{down}}_{K\stackrel{f}{\to} L}\circ \hat{j}_q^L\,\,\text{and }\hat{j}_q^L\circ\Delta_{q}^{K\stackrel{f}{\to} L}= \Delta^{q}_{K\stackrel{f}{\to} L}\circ \hat{j}_q^L\]
\end{theorem}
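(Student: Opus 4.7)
The plan is to transport every chain-level ingredient that defines $\Delta_{q,\mathrm{up}}^{K\stackrel{f}{\to} L}$ and $\Delta_{q,\mathrm{down}}^{K\stackrel{f}{\to} L}$ through the canonical isometries $j_q^K:C_q^K\to C_K^q$ and $j_q^L:C_q^L\to C_L^q$, and check piece by piece that what comes out coincides with the corresponding cochain-level ingredient defining $\Delta^{q,\mathrm{up}}_{K\stackrel{f}{\to} L}$ and $\Delta^{q,\mathrm{down}}_{K\stackrel{f}{\to} L}$. Because $\hat{j}_q^L$ is $\R$-linear, the third identity is the sum of the first two, so it suffices to prove the up and down statements separately.

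The first step is to build a dictionary between chain and cochain subspaces. From the commuting square $j_{q+1}^K\circ (\partial_{q+1}^K)^{\ast}=\delta_K^q\circ j_q^K$ recorded in \autoref{sec:basics} and its adjoint (which is free because each $j$ is an isometry), one extracts $j_q^K(\ker(\partial_q^K))=\ker((\delta_{q-1}^K)^{\ast})$. Combining with \autoref{lm:im ker commute}, which supplies $j_q^L\circ f_q=(f^q)^{\ast}\circ j_q^K$, $j_q^L(\Ima(f_q))=\ker(f^q)^\perp$, and $j_q^K(\ker(f_q)^\perp)=\Ima(f^q)=\ker((f^q)^{\ast})^\perp$, yields the key subspace correspondences
\[
j_{q+1}^L\bigl(\ChUp{L}{K}\bigr)=\mathfrak{C}_{L\leftarrow K}^{q+1},\qquad j_{q-1}^K\bigl(\ChDown{K}{L}\bigr)=\mathfrak{C}_{K\rightarrow L}^{q-1}.
\]
In particular, the restrictions $j_{q+1}^L|_{\ChUp{L}{K}}$ and $j_{q-1}^K|_{\ChDown{K}{L}}$ are isometric isomorphisms between the chain and cochain versions of the persistent domain spaces.

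The second step intertwines the restricted (co)boundary maps. Restricting the two naturality squares above to the persistent subspaces gives
\[
\hat{j}_q^L\circ \partial_{q+1}^{L,K}=\partial^{q+1}_{L,K}\circ j_{q+1}^L|_{\ChUp{L}{K}},\qquad \hat{j}_q^K\circ \delta_{q-1}^{K,L}=\delta^{q-1}_{K,L}\circ j_{q-1}^K|_{\ChDown{K}{L}},
\]
and their adjoints come for free. Pasting each square with its own adjoint yields immediately $\hat{j}_q^L\circ \Delta_{q,\mathrm{up}}^{K,L}=\Delta^{q,\mathrm{up}}_{K,L}\circ \hat{j}_q^L$, the up identity. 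For the down identity I additionally need the formula $\hat{j}_q^L\circ\hat{f}_q=(\hat{f}^q)^{-1}\circ \hat{j}_q^K$, which drops out of \autoref{lm:im ker commute} once one invokes that $\hat{f}_q$ and $\hat{f}^q$ are isometries, so their adjoints coincide with their inverses (\autoref{lem:isometry}). Inserting this identity and its inverse into $\Delta_{q,\mathrm{down}}^{K,L}=\hat{f}_q\circ \delta_{q-1}^{K,L}\circ(\delta_{q-1}^{K,L})^{\ast}\circ \hat{f}_q^{-1}$, and then using the down naturality square and its adjoint to commute $\hat{j}_q^K$ past $\delta_{q-1}^{K,L}\circ(\delta_{q-1}^{K,L})^{\ast}$, collapses $\hat{j}_q^L\circ\Delta_{q,\mathrm{down}}^{K,L}$ onto $\Delta^{q,\mathrm{down}}_{K,L}\circ\hat{j}_q^L$.

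The main obstacle is purely bookkeeping: keeping track of which adjoint is taken with respect to which inner product, and, crucially, verifying that every restriction lands in the advertised subspace so that the hatted maps really are bijections between the claimed domains. Once those subspace correspondences and the two naturality squares of step two are pinned down, the remaining diagram chase is routine.
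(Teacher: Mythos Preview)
Your proposal is correct and follows essentially the same approach as the paper: both arguments start from the ambient commuting squares relating $\partial$, $\delta$, $f_q$, $f^q$ via the isometries $j$, restrict these to the persistent subspaces to obtain the intertwining relations for $\partial_{q+1}^{L,K}$, $\delta_{q-1}^{K,L}$, and $\hat f_q$, and then take adjoints and compose to recover the Laplacian identities. The paper compresses your step-by-step verification into two commutative diagrams and the sentence ``by taking adjoints \ldots\ and following the definitions,'' but the content is the same.
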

\begin{proof}
We note that the following diagram commutes with all vertical arrows being isometries.
\begin{center}
\begin{tikzcd}

C^L_{q+1} \arrow[r, "\partial_{q+1}^L"]\arrow[d,"j_{q+1}^L"']&C^L_q \arrow[d,"{j}_q^L"']& C^K_q \arrow[l, "f_q"']\arrow[d,"{j}_q^K"'] &  C^K_{q-1} \arrow[l, "(\partial_{q-1}^K)^*"']\arrow[d,"{j}_{q-1}^K"']\\
C_L^{q+1} \arrow[r, "(\delta^{q}_L)^*"']&C_L^q & C_K^q \arrow[l, "({f}^q)^*"]&  C_K^{q-1} \arrow[l, "\delta^{q-1}_K"]
  
\end{tikzcd}.
\end{center}
By slight abuse of notation, this commutative diagram immediately gives rise to two isometries 
\[\hat{j}_{q-1}^K:\mathfrak{C}^{K\rightarrow L}_{q-1}\rightarrow\mathfrak{C}_{K\rightarrow L}^{q-1}\,\,\text{and}\,\,\hat{j}_{q+1}^L:\mathfrak{C}^{L\leftarrow K}_{q+1}\rightarrow\mathfrak{C}_{L\leftarrow K}^{q+1}.\]
Furthermore, the following diagram commutes:
\begin{center}
\begin{tikzcd}

\mathfrak{C}^{L\leftarrow K}_{q+1} \arrow[r, "\partial_{q+1}^{L,K}"]\arrow[d,"\hat{j}_{q+1}^L"']&\Ima(f_q) \arrow[d,"\hat{j}_q^L"']\arrow[r, "(\hat{f}_q)^*"]& \ker(f_q)^\perp \arrow[d,"\hat{j}_q^K"'] &  \mathfrak{C}^{K\rightarrow L}_{q-1} \arrow[l, "\delta_{q-1}^{K,L}"']\arrow[d,"\hat{j}_{q-1}^K"']\\
\mathfrak{C}_{L\leftarrow K}^{q+1} \arrow[r, "\partial_{L,K}^{q+1}"']&\ker(f^q)^\perp\arrow[r, "\hat{f}^q"']& \Ima(f^q) &  \mathfrak{C}_{K\rightarrow L}^{q-1} \arrow[l, "\delta^{q-1}_{K,L}"]
  
\end{tikzcd}.
\end{center}
Then, by taking adjoints of horizontal arrows of the diagram above, one still obtains a commutative diagram. Then, simply by following the definitions, we conclude the proof.
\end{proof}

\section{Details for Section \ref{sec:schur}}\label{app:secSchur}

\begin{lemma}\label{lem:cancellation}
Let $R\in \R^{n\times n}$ be an invertible matrix and $E \in \R^{n\times m}$ be any matrix. Then, 
\[
E^\mathrm{T} R (R^{-1} E E^\mathrm{T} R)^\dagger R^{-1} E = E^\mathrm{T} (E E^\mathrm{T})^\dagger E = E^\dagger E.
\]

\end{lemma}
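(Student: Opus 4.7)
The plan is to handle the two equalities separately. The right-hand identity $E^T(EE^T)^\dagger E = E^\dagger E$ is a classical fact about the Moore--Penrose pseudoinverse; I would verify it directly from a thin singular value decomposition $E = \hat U\hat\Sigma\hat V^T$ (with $\hat U,\hat V$ having orthonormal columns and $\hat\Sigma$ square and invertible of size $r=\mathrm{rank}(E)$), since both sides then collapse to $\hat V\hat V^T$, the orthogonal projector onto the row space of $E$.

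For the more substantial first equality, the main idea is to pass through a rank factorization $E = PQ$, where $P\in\R^{n\times r}$ has full column rank and $Q\in\R^{r\times m}$ has full row rank (the case $E=0$ is trivial). Setting $M := QQ^T$, which is invertible of size $r\times r$, we have $EE^T = PMP^T$ and hence
\[R^{-1}EE^TR \;=\; P'\, M\, Q'\qquad\text{where } P' := R^{-1}P,\quad Q' := P^T R.\]
Because $R$ is invertible, $P'$ inherits full column rank and $Q'$ inherits full row rank. I would then invoke the standard pseudoinverse formula for a product of full-rank factors (Greville's identity) to obtain
\[(P'MQ')^\dagger \;=\; Q'^T(Q'Q'^T)^{-1}\, M^{-1}\, (P'^TP')^{-1}P'^T.\]

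The final step is pure telescoping. Using $E^T R = Q^T P^T R = Q^T Q'$ and $R^{-1}E = R^{-1}P\,Q = P'Q$, one substitutes into the left-hand side; the block $Q'\,Q'^T(Q'Q'^T)^{-1}$ collapses to the identity on the left, and $(P'^TP')^{-1}P'^T\,P'$ collapses to the identity on the right, leaving $Q^T M^{-1} Q = Q^T(QQ^T)^{-1}Q$. The same rank factorization yields $E^\dagger = Q^\dagger P^\dagger$ and hence $E^\dagger E = Q^T(QQ^T)^{-1}Q$, matching the middle expression. The only genuinely technical ingredient is the rank-factorization formula for the pseudoinverse; the rest is bookkeeping that succeeds precisely because the outer $E^T$ and $E$ supply copies of $Q^T$ and $Q$ that cancel against the $P$-data in the sandwiched matrix, making the presence of $R$ and $R^{-1}$ immaterial once they are absorbed into $P'$ and $Q'$.
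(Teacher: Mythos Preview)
Your argument is correct. Both equalities are verified cleanly: the thin SVD reduces the second identity to $\hat V\hat V^{T}=\hat V\hat V^{T}$, and for the first identity the rank factorization $E=PQ$ together with the full-rank pseudoinverse formula $(P'MQ')^{\dagger}=Q'^{T}(Q'Q'^{T})^{-1}M^{-1}(P'^{T}P')^{-1}P'^{T}$ makes the telescoping go through exactly as you describe, leaving $Q^{T}(QQ^{T})^{-1}Q=E^{\dagger}E$.

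The paper's proof takes a genuinely different route. Rather than factoring $E$, it relies solely on the projector identities $H^{\dagger}H=\pi_{\Ima(H^{T})}$ and $HH^{\dagger}=\pi_{\Ima(H)}$. The trick is to premultiply by $\pi_{\Ima(E^{T})}=E^{\dagger}E$ (which leaves $E^{T}$ unchanged), thereby manufacturing a copy of $R^{-1}EE^{T}R$ adjacent to its own pseudoinverse; this produces the projector onto $\Ima(R^{-1}EE^{T}R)=\Ima(R^{-1}E)$, which then acts as the identity on the remaining $R^{-1}E$. The payoff is that the paper's argument needs only one cited fact (the projector description of $H^{\dagger}H$) and no explicit factorizations, whereas your route is more constructive but requires the Greville/full-rank-factorization formula for the pseudoinverse as an external ingredient. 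Either approach is perfectly adequate here; yours has the mild advantage of making the role of the row space of $E$ explicit throughout, while the paper's is shorter and coordinate-free in spirit.
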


\begin{proof}
To prove the equalities, we need the following fact
\begin{claim}[{~\cite[Proposition 3.3]{pseudoinverse}}]\label{claim:projector}
Let $H \in \R^{n\times m}$. Then, $HH^\dagger = \pi_{\Ima(H)} = \mathbb{I}_n - \pi_{\ker(H^\mathrm{T})}$, where $\mathbb{I}_n$ is the $n\times n$ identity matrix and for any subspace $W \subseteq \R^n$,  $\pi_W \in \R^{n\times n}$ is the orthogonal projector onto $W$. Similarly, $H^\dagger H = \pi_{\Ima(H^\mathrm{T})} = \mathbb{I}_m - \pi_{\ker(H)}$. 
\end{claim}
Using~\autoref{claim:projector}, we get
\begin{align*}
E^\mathrm{T} R (R^{-1} E E^\mathrm{T} R)^\dagger R^{-1} E =& \pi_{\Ima(E^\mathrm{T})} E^\mathrm{T} R (R^{-1} E E^\mathrm{T} R)^\dagger R^{-1} E \\
                                        =& E^\dagger E E^\mathrm{T} R (R^{-1} E E^\mathrm{T} R)^\dagger R^{-1} E \\
                                        =& E^\dagger R (R^{-1} E E^\mathrm{T} R ) (R^{-1} E E^\mathrm{T} R)^\dagger R^{-1} E \\
                                        =& E^\dagger R (\mathbb{I}_n - \pi_{\ker(R^\mathrm{T} E E^\mathrm{T} (R^{-1})^\mathrm{T})})R^{-1}E \\
                                        =& E^\dagger R (\mathbb{I}_n - \pi_{\ker(E^\mathrm{T} (R^{-1})^\mathrm{T})})R^{-1}E \\
                                        =& E^\dagger R (\mathbb{I}_n - \pi_{\ker( (R^{-1} E)^\mathrm{T})})R^{-1}E \\
                                        =& E^\dagger R (\mathbb{I}_n - \pi_{\Ima( (R^{-1} E))^\perp})R^{-1}E \\
                                        =& E^\dagger R R^{-1} E \\
                                        =& E^\dagger E.
\end{align*}
And, by taking $R = \mathbb{I}_n$, we also get $E^\mathrm{T} (E E^\mathrm{T})^\dagger E = E^\dagger E$.
\end{proof}

\begin{proof}[Proof of~\autoref{prop:well-defn}]
Let $\mathcal{B}_1$ and $\mathcal{B}_2$ be ordered bases for $W$ and $W^\perp$ respectively and let $n= \dim(V)$, $d = \dim(W)$. Writing $L$ with respect to the ordered basis $\mathcal{B}_1 \cup \mathcal{B}_2$ in which the order is given by extending the orders on $\mathcal{B}_1$ and $\mathcal{B}_2$ by asserting that $\mathcal{B}_1 < \mathcal{B}_2$, we get the block matrix representation 
\[
[L]_{\mathcal{B}_1\cup \mathcal{B}_2} = \begin{pmatrix}
A & B \\
C & D
\end{pmatrix}
\]
where $A$ is $d\times d$ and $D$ is $(n-d)\times (n-d)$ square matrices.

Let $\mathcal{C}_1$ and $\mathcal{C}_2$ be any orthonormal bases for $W$ and $W^\perp$ respectively.
Let $P$ and $R$ be the change of basis matrices from $\mathcal{B}_1$ to $\mathcal{C}_1$ and from $\mathcal{B}_2$ to $\mathcal{C}_2$ respectively. Since $[L]_{\mathcal{C}_1 \cup \mathcal{C}_2}$ is a positive semi-definite matrix, we have that
\[
    \begin{pmatrix}
    P^{-1} & 0 \\
    0 & R^{-1}
    \end{pmatrix}
    \begin{pmatrix}
    A & B \\
    C & D
    \end{pmatrix}
    \begin{pmatrix}
    P & 0 \\
    0 & R
    \end{pmatrix}
    = [L]_{\mathcal{C}_1 \cup \mathcal{C}_2} = EE^\mathrm{T}
\]
for some $E \in \R^{n\times n}$. Writing $E = \begin{pmatrix}
E_1 \\
E_2
\end{pmatrix}$ as a block matrix where $E_1 \in \R^{d\times n}$ and $E_2 \in \R^{(n-d)\times n}$. Thus, to show that the Schur restriction is independent of choice of basis, we need to show that:
\[
P^{-1}(A - B D^\dagger C) P = E_1 E_1^\mathrm{T} - E_1 E_2^\mathrm{T} (E_2 E_2^\mathrm{T})^\dagger E_2 E_1^\mathrm{T}.
\]

By computing the left-hand side, we get that
\begin{align*}
    P^{-1}(A - B D^\dagger C) P =& E_1 E_1^\mathrm{T} - E_1 E_2^\mathrm{T} R^{-1}(R E_2 E_2^\mathrm{T} R^{-1})^\dagger R E_2 E_1^\mathrm{T} \\
                                =& E_1 E_1^\mathrm{T} - E_1 E_2^\mathrm{T} (E_2 E_2^\mathrm{T})^\dagger E_2 E_1^\mathrm{T}
\end{align*}
where the last equality follows from~\autoref{lem:cancellation} and this finishes the proof.
\end{proof}

Now, we provide the following lemma that will be useful in the proof of~\autoref{prop:schur-restriction}.

\begin{lemma}\label{lemmaForMaximality}
 Let $V$ be a finite dimensional inner product space and let $L: V\to V$ be a self-adjoint positive semi-definite operator. Let $W \subseteq V$ be a subspace. Then, for every $w\in W$, there is an element $w^\perp \in W^\perp$ such that $L(w+w^\perp) \in W$.
\end{lemma}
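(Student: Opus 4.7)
The plan is to work with the block-matrix representation of $L$ with respect to the orthogonal decomposition $V = W\oplus W^\perp$. Writing
\[
L = \begin{pmatrix} A & B \\ B^* & D \end{pmatrix},
\]
where $A:W\to W$ and $D:W^\perp\to W^\perp$ are self-adjoint, the condition that $L(w+w^\perp)\in W$ for some $w^\perp\in W^\perp$ is equivalent to killing the $W^\perp$-component of $L(w+w^\perp)$, i.e., to solving $Dw^\perp = -B^*w$ in $W^\perp$. So the whole problem reduces to showing that $B^*w\in\mathrm{Im}(D)$ for every $w\in W$.

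Since $D$ is self-adjoint, $\mathrm{Im}(D)=\ker(D)^\perp$ inside $W^\perp$, so it suffices to show that $B^*w$ is orthogonal to $\ker(D)$. Concretely, I would pick an arbitrary $u\in\ker(D)\subseteq W^\perp$ and show $\langle B^*w,u\rangle = \langle w,Bu\rangle = 0$, which will follow immediately once we know $Bu=0$.

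The key step — and the only place where positive semi-definiteness of $L$ (and not just of the diagonal block $D$) is used — is to prove that $L$ annihilates all of $\ker(D)$. For $u\in\ker(D)$, the diagonal piece gives $\langle Lu,u\rangle = \langle Du,u\rangle = 0$, and then the Cauchy–Schwarz inequality for the PSD form $(x,y)\mapsto\langle Lx,y\rangle$ yields $|\langle Lu,v\rangle|^2\leq \langle Lu,u\rangle\langle Lv,v\rangle=0$ for every $v\in V$, forcing $Lu=0$. Reading this in block form gives $Bu=0$ (and $Du=0$), which is exactly what is needed.

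The main obstacle is this last step: a priori only $D$ is known to vanish on $\ker(D)$, not the off-diagonal coupling $B$, and naive linear algebra on $D$ alone cannot kill $B|_{\ker(D)}$. The PSD Cauchy–Schwarz trick is the one substantive ingredient; once it is in hand the rest of the argument is purely bookkeeping, and the existence of the desired $w^\perp$ follows from self-adjointness of $D$.
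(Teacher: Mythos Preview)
Your proof is correct. Both your argument and the paper's start from the same block decomposition $L = \begin{pmatrix} A & B \\ B^* & D\end{pmatrix}$ and reduce to the same key fact, namely that $B^*w$ always lies in $\mathrm{Im}(D)$ (equivalently, $(I - DD^\dagger)B^* = 0$). The paper proceeds constructively: it explicitly sets the coordinates of $w^\perp$ to be $-D^\dagger B^{\mathrm{T}} c$ and then invokes an external result on positive semi-definite block matrices to conclude that the $W^\perp$-component $(I-DD^\dagger)B^{\mathrm{T}} c$ vanishes. You instead prove the range condition directly and self-containedly: for $u \in \ker(D)$ you observe $\langle Lu,u\rangle = \langle Du,u\rangle = 0$ and then use the Cauchy--Schwarz inequality for the positive semi-definite form $(x,y)\mapsto\langle Lx,y\rangle$ to conclude $Lu=0$, hence $Bu=0$. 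Your route avoids both the Moore--Penrose pseudoinverse and the external citation, and makes transparent exactly where positive semi-definiteness of the full operator $L$ (rather than just of the block $D$) enters; the paper's version is shorter but hides this ingredient inside the cited theorem.
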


\begin{proof}
Choose orthonormal bases for $W$, $\mathcal{B}_W = \{ w_1,...,w_d\}$, and for $W^\perp$, $\mathcal{B}_{W^\perp}\{ v_{d+1}, ..., v_{n} \}$, to write a matrix representation of $L$. Namely, $[L] = \begin{pmatrix}
A & B \\
B^\mathrm{T} & D
\end{pmatrix}$ with respect to the basis $\mathcal{B}_W \cup \mathcal{B}_{W^\perp}$. Let $w \in W$ and let $c = [c_1 . . . c_d]^\mathrm{T} \in \R^d$ be its coordinates, i.e. $w = [w_1 . . . w_d] [c_1 . . .c_d]^\mathrm{T} = \sum_{i=1}^d c_i w_i$. Choose coordinates for $w^\perp$ to be $-D^{\dagger} B^\mathrm{T} c \in \R^{n-d}$. Then, 
\[
[L]\begin{pmatrix}
c \\
-D^\dagger B^\mathrm{T} c
\end{pmatrix} = \begin{pmatrix}
(A - B D^\dagger B^\mathrm{T})c \\
(I - D D^\dagger)B^\mathrm{T} c
\end{pmatrix}
= \begin{pmatrix}
(A - B D^\dagger B^\mathrm{T})c \\
0
\end{pmatrix}
\]
by~\cite[Theorem 16.1]{geomethods}, which states that $(I - D D^\dagger)B^\mathrm{T} = 0$ when [L] is positive semi-definite . Thus, $L(w+w^\perp) \in W$.
\end{proof}

In order to prove~\autoref{prop:schur-restriction}, we will use the so-called extremal characterization of Schur complement, which is given by:

\begin{theorem}[\cite{extremalschur}, Extremal characterization of Schur complement]\label{exschur-matrix}
 Let $M = \begin{pmatrix}
 A & B \\
 B^\mathrm{T} & D
 \end{pmatrix}$ be a positive semi-definite real matrix. Then,
\[
A - BD^\dagger B^\mathrm{T} = \max \left\{ N: M- \begin{pmatrix}
N & 0_{d\times (n-d)} \\
0_{(n-d)\times d} & 0_{(n-d)\times (n-d)} 
\end{pmatrix} \succeq 0, N\text{ is } d\times d \text{ positive semi-definite}\right\}.
\]
\end{theorem}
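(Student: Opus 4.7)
The plan is to prove that $N^* := A - BD^\dagger B^{\mathrm{T}}$ achieves the maximum by establishing two things in the Loewner order: (i) $N^*$ itself lies in the feasible set, and (ii) every feasible $N$ satisfies $N \preceq N^*$. Both will ultimately rest on the standard block characterization of positive semi-definiteness in terms of the generalized Schur complement, namely that a symmetric block matrix $\begin{pmatrix} A & B \\ B^{\mathrm{T}} & D\end{pmatrix}$ is PSD if and only if $D \succeq 0$, the column-space inclusion $(I - DD^\dagger)B^{\mathrm{T}} = 0$ holds, and $A - BD^\dagger B^{\mathrm{T}} \succeq 0$. This is essentially the content of \cite[Theorem 16.1]{geomethods}, already cited in the proof of \autoref{lemmaForMaximality}.

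For feasibility of $N^*$, first note that $N^* \succeq 0$ is immediate from the PSD characterization applied to $M$. For the PSD condition $M - \mathrm{diag}(N^*,0) \succeq 0$, I would explicitly compute
\[
M - \begin{pmatrix} N^* & 0 \\ 0 & 0\end{pmatrix} = \begin{pmatrix} BD^\dagger B^{\mathrm{T}} & B \\ B^{\mathrm{T}} & D\end{pmatrix}
\]
and use the column inclusion $BD^\dagger D = B$ (equivalently $DD^\dagger B^{\mathrm{T}} = B^{\mathrm{T}}$, guaranteed by $M \succeq 0$) to produce the explicit factorization
\[
\begin{pmatrix} BD^\dagger B^{\mathrm{T}} & B \\ B^{\mathrm{T}} & D\end{pmatrix} = \begin{pmatrix} BD^\dagger \\ I\end{pmatrix} D \begin{pmatrix} D^\dagger B^{\mathrm{T}} & I\end{pmatrix}.
\]
Since $D$ is PSD and $D^\dagger$ is symmetric (as $D$ is symmetric), the right-hand side has the form $X D X^{\mathrm{T}}$ for $X = \begin{pmatrix} BD^\dagger \\ I\end{pmatrix}$, hence is PSD.

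For maximality, suppose $N$ is a feasible $d \times d$ PSD matrix, meaning
\[
M - \begin{pmatrix} N & 0 \\ 0 & 0\end{pmatrix} = \begin{pmatrix} A - N & B \\ B^{\mathrm{T}} & D\end{pmatrix} \succeq 0.
\]
Applying the block PSD characterization to this matrix yields $(A - N) - BD^\dagger B^{\mathrm{T}} \succeq 0$, i.e.\ $N^* - N \succeq 0$, which is precisely $N \preceq N^*$. Combined with (i), this identifies $N^*$ as the Loewner maximum, giving exactly the claimed identity.

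The only mildly subtle point is the factorization step in (i), which requires the column inclusion property; since this is exactly the condition furnished by $M \succeq 0$ in the cited theorem, there is no real obstacle, and the whole argument reduces to two applications of the standard block PSD criterion.
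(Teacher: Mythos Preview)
Your argument is correct. The paper does not actually supply its own proof of this statement: \autoref{exschur-matrix} is quoted from \cite{extremalschur} as a known result and is then used as a black box to obtain \autoref{exschur} and, in turn, \autoref{prop:schur-restriction}. So there is no ``paper's proof'' to compare against.

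That said, what you wrote is precisely the standard derivation of the extremal characterization from the Albert-type block PSD criterion (the result cited in the paper as \cite[Theorem 16.1]{geomethods}). Both directions are handled correctly: the factorization
\[
\begin{pmatrix} BD^\dagger B^{\mathrm{T}} & B \\ B^{\mathrm{T}} & D\end{pmatrix}
= \begin{pmatrix} BD^\dagger \\ I\end{pmatrix} D \begin{pmatrix} D^\dagger B^{\mathrm{T}} & I\end{pmatrix}
\]
does indeed hold once one uses $DD^\dagger B^{\mathrm{T}}=B^{\mathrm{T}}$ (and hence $BD^\dagger D=B$, since $D^\dagger D=DD^\dagger$ for symmetric $D$), which gives feasibility of $N^*$; and for maximality, the off-diagonal block and the $(2,2)$ block of $M-\mathrm{diag}(N,0)$ are unchanged, so the column-inclusion and $D\succeq 0$ hypotheses of the block criterion are inherited and the Schur-complement condition yields $N^*-N\succeq 0$ directly. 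Nothing is missing.
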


This characterization is given by in terms of matrices. However we need it in terms operators. Combining the extremal characterization of Schur complement in matrices,~\autoref{exschur-matrix} , and the basis invariance of Schur restriction on operators,~\autoref{prop:well-defn} , we get the following result.

\begin{corollary}\label{exschur}
 Let $V$ be finite dimensional real inner product space and let $L: V \to V$ be a self-adjoint positive semi-definite operator. Let $W \subseteq V$ be a subspace. Then,
 \[
 \Sch(L, W) = \max \{ M: W \to W \text{self-adjoint positive semi-definite} \mid L\succeq \iota_V \circ M\circ \proj_W \}
 \]
 where $\iota_V : W\hookrightarrow V$ is the inclusion map and $\proj_W : V\to W$ is the projection map.
\end{corollary}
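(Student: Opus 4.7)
The plan is to reduce the operator-level statement to the matrix-level extremal characterization \autoref{exschur-matrix}, using the fact that orthonormal bases convert between the two settings while preserving self-adjointness, positive semi-definiteness, and the Loewner order.

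First I would fix an orthonormal basis $\mathcal{B}_W$ of $W$ of size $d$ and an orthonormal basis $\mathcal{B}_{W^\perp}$ of $W^\perp$, and work with the ordered orthonormal basis $\mathcal{B} := \mathcal{B}_W \cup \mathcal{B}_{W^\perp}$ of $V$. In coordinates with respect to $\mathcal{B}$, write
\[
[L]_{\mathcal{B}} = \begin{pmatrix} A & B \\ B^\mathrm{T} & D \end{pmatrix},
\]
where the block decomposition is along $W$ and $W^\perp$ (note that $[L]_{\mathcal{B}}$ is symmetric and positive semi-definite since $\mathcal{B}$ is orthonormal). By \autoref{prop:well-defn} and the definition of $\Sch(L,W)$, the matrix representation $[\Sch(L,W)]_{\mathcal{B}_W}$ equals $A - B D^\dagger B^\mathrm{T}$.

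Next I would translate the right-hand side of the corollary into matrix language. For any self-adjoint positive semi-definite operator $M : W \to W$, writing $[M] := [M]_{\mathcal{B}_W}$, the orthonormality of $\mathcal{B}$ immediately yields
\[
[\iota_V \circ M \circ \proj_W]_{\mathcal{B}} = \begin{pmatrix} [M] & 0_{d\times(n-d)} \\ 0_{(n-d)\times d} & 0_{(n-d)\times(n-d)} \end{pmatrix},
\]
because $\proj_W$ drops the $W^\perp$-coordinates and $\iota_V$ reintroduces them as zero. Moreover, since $\mathcal{B}$ is orthonormal, $M$ is self-adjoint and positive semi-definite as an operator if and only if $[M]$ is symmetric and positive semi-definite as a matrix, and the operator inequality $L \succeq \iota_V \circ M \circ \proj_W$ holds if and only if $[L]_\mathcal{B} - \begin{pmatrix}[M] & 0 \\ 0 & 0\end{pmatrix} \succeq 0$ as matrices. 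Under this bijective correspondence, the Loewner order on operators $W \to W$ coincides with the Loewner order on $d \times d$ symmetric matrices.

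Finally, applying \autoref{exschur-matrix} to the block matrix $[L]_{\mathcal{B}}$ gives
\[
A - BD^\dagger B^\mathrm{T} = \max\left\{ N \;\middle|\; [L]_{\mathcal{B}} - \begin{pmatrix} N & 0 \\ 0 & 0 \end{pmatrix} \succeq 0,\; N \text{ is } d\times d \text{ positive semi-definite}\right\},
\]
and translating both sides back through the correspondence $N \leftrightarrow M$ yields precisely $\Sch(L,W) = \max\{M : W \to W \text{ self-adjoint PSD} \mid L \succeq \iota_V \circ M \circ \proj_W\}$, concluding the proof. The only nontrivial step is confirming that the matrix-level maximum really does transport to the operator-level maximum; this amounts to checking that the bijection $M \leftrightarrow [M]_{\mathcal{B}_W}$ is an order isomorphism of Loewner posets, which follows immediately from orthonormality of $\mathcal{B}_W$. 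Basis invariance (\autoref{prop:well-defn}) guarantees that the resulting operator does not depend on the particular orthonormal bases chosen.
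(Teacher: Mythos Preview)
Your proof is correct and follows exactly the approach the paper indicates: the paper simply states that the corollary is obtained by ``combining the extremal characterization of Schur complement in matrices, \autoref{exschur-matrix}, and the basis invariance of Schur restriction on operators, \autoref{prop:well-defn},'' and your argument spells out precisely that combination via a choice of orthonormal bases.
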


\begin{remark}\label{rem:adjoint}
Let $\mathrm{PSD}(V)$ denote the Loewner poset of the self-adjoint positive semi-definite operators on a finite dimensional real inner product space $V$. Consider a subspace $W\subseteq V$. Then, there is an order preserving map $\mathcal{E}: \mathrm{PSD}(W) \to \mathrm{PSD}(V)$ given by $M \mapsto \iota_V \circ M \circ \proj_W$, where $\iota_V : W \hookrightarrow V$ is the inclusion and $\proj_W : V\to W$ is the projection map. If we consider the poset categories $\mathrm{PSD}(W)$, and $\mathrm{PSD}(V)$,~\autoref{exschur} is essentially stating that $\Sch(-,W): \mathrm{PSD}(V) \to \mathrm{PSD}(W)$ is a right adjoint to $\mathcal{E}$.
\end{remark}

\begin{proof}[Proof of~\autoref{prop:schur-restriction}]
Let 
\[S = \{ M: W\to W \text{ self-adjoint positive semi-definite} \mid f\circ f^* - \iota_V \circ M\circ \proj_{W} \succeq 0\}.\]
By \autoref{exschur}, it is enough to show that $f_W \circ f_W^* = \max S$. Observe that 
\[
\underbrace{\iota_V \circ f_W}_{\hat{f}_W} \circ \underbrace{f_W^* \circ \proj_W}_{\hat{f}_W^*} = \hat{f}_W \circ \hat{f}_W^*,
\]
where $\hat{f}_W: f^{-1}(W) \to V$ is the restriction of $f$ on $f^{-1}(W)$.
Write $\hat{V} = f^{-1}(W) \oplus f^{-1}(W)^\perp$. Then, we have $f = \hat{f}_W \oplus \hat{f}_W^\perp$, where $\hat{f}_W^\perp : f^{-1}(W)^\perp \to V$. Thus, we get that $f\circ f^* = \hat{f}_W \circ \hat{f}_W^* + \hat{f}_W^\perp \circ (\hat{f}_W^\perp)^*$. 
Therefore, $f\circ f^* - \hat{f}_W \circ \hat{f}_W^* = \hat{f}_W^\perp \circ (\hat{f}_W^\perp)^*$ is positive semi-definite. As $\iota_V \circ f_W \circ f_W^* \circ \proj_W = \hat{f}_W \circ \hat{f}_W^*$, we get that $f\circ f^* - \iota_V \circ f_W \circ f_W^* \circ \proj_W = \hat{f}_W^\perp \circ (\hat{f}_W^\perp)^*$ is positive semi-definite. Thus, $f_W \circ f_W^* \in S$. 

To show maximality, we choose any $T \in S$, i.e., $f\circ f^* - \iota_V \circ T\circ \proj_W$ is positive semi-definite. The assumption that $f\circ f^* - \iota_V \circ T\circ \proj_W$ is positive semi-definite is equivalent to 
\begin{equation}
    \langle (f\circ f^* - \iota_V\circ T \circ \proj_W)(w+w^\perp), w+w^\perp \rangle \geq 0,\,\, \forall w\in W, w^\perp \in W^\perp.
\end{equation}
By bilinearity of the inner product, $\forall w\in W, w^\perp \in W^\perp$, the above inequality can be equivalently written as\
\begin{align}
    &\langle f\circ f^* (w+w^\perp), w+w^\perp \rangle \geq \langle T(w), w \rangle,\,\,\  \\
    \Longleftrightarrow&\langle f^* (w+w^\perp), f^*(w+w^\perp) \rangle \geq \langle T(w), w \rangle,\,\,\  \\
    \Longleftrightarrow&\langle f^*(w), f^*(w) \rangle + 2\langle f^*(w), f^*(w^\perp) \rangle + \langle f^*(w^\perp), f^*(w^\perp) \rangle \geq \langle T(w), w \rangle.\  \label{sum-with-adjoints}
\end{align}
Let $\mathcal{B} = \{ \hat{v}_1, ..., \hat{v}_l, \hat{v}_{l+1}, ..., \hat{v}_n\}$ be an orthonormal basis for $\hat{V}$ such that $\{\hat{v}_1 ,..., \hat{v}_l \}$ is a basis for $f^{-1}(W)$ and $\{ \hat{v}_{l+1}, ..., \hat{v}_n\}$ is a basis for $(f^{-1}(W))^\perp$. Then, we can write $f^*(v) = \sum_{i=1}^n \langle v, f(\hat{v}_i) \rangle \hat{v}_i$. Then, Equation~\eqref{sum-with-adjoints} becomes
\begin{equation}\label{assumption-psd}
    \sum_{i=1}^n \langle w, f(\hat{v}_i)\rangle ^2 + 2\sum_{i=l+1}^n \langle w, f(\hat{v_i})\rangle \langle w^\perp, f(\hat{v}_i)\rangle + \sum_{i=l+1}^n \langle w^\perp, f(\hat{v}_i)\rangle^2 
    \geq \langle T(w), w\rangle
\end{equation}
for all $w \in W$, $w^\perp \in W^\perp$. By~\autoref{lemmaForMaximality}, for each $w\in W$ we pick an element $w^\perp \in W^\perp$ such that $f\circ f^* (w+w^\perp) \in W$. Then, $f^* (w+w^\perp)\in f^{-1}(W)$ and thus Equation~\eqref{assumption-psd} becomes,
\begin{equation}
    \sum_{i=1}^l \langle w, f(\hat{v}_i)\rangle^2  \geq \langle Tw, w\rangle
\end{equation}
for all $w \in W$, which is equivalent to $f_W \circ f_W^* - T$ being positive semi-definite by reversing the procedure above. Thus, $f_W \circ f_W^* = \max S = \Sch(f\circ f^*, W)$.
\end{proof}

\begin{proof}[Proof of~\autoref{thm:schur-persistent}]
By~\autoref{prop:schur-restriction}, $\delta_{q-1}^{K,L} \circ (\delta_{q-1}^{K,L})^* = \Sch(\Delta_{q,\mathrm{down}}^K, \ker(f_q)^\perp)$. Thus, $\Delta_{q,\mathrm{down}}^{K,L} = \hat{f}_q \circ \delta_{q-1}^{K,L} \circ (\delta_{q-1}^{K,L})^* \circ \hat{f}_q^{-1} = \hat{f}_q \circ \Sch(\Delta_{q,\mathrm{down}}^K, \ker(f_q)^\perp) \circ \hat{f}_q^{-1}$.

Since $\Ima{ (\partial_{q+1}^{L,K})}\subseteq f_q(\ker(\partial_q^K))$, we let $\Bar{\partial}_{q+1}^{L,K} : \ChUp{L}{K} \to f_q(\ker(\partial_q^K)$ be given by the same formula as $\partial_{q+1}^{L,K}$.  By~\autoref{prop:schur-restriction}, $\Bar{\partial}_{q+1}^{L,K} \circ (\Bar{\partial}_{q+1}^{L,K})^* = \Sch(\Delta_{q,\mathrm{up}}^L, f_q(\ker(\partial_q^K)))$. Then,
\begin{align*}
\iota_{\Ima(f_q)}\circ \Sch(\Delta_{q,\mathrm{up}}^L, f_q(\ker(\partial_q^K)))\circ \proj_{f_q(\ker(\partial_q^K))} =& 
\underbrace{\iota_{\Ima(f_q)}\circ \Bar{\partial}_{q+1}^{L,K}}_{\partial_{q+1}^{L,K}} \circ \underbrace{(\Bar{\partial}_{q+1}^{L,K})^* \circ \proj_{f_q(\ker(\partial_q^K))}}_{(\partial_{q+1}^{L,K})^*}  \\
=&\partial_{q+1}^{L,K}\circ (\partial_{q+1}^{L,K})^* = \Delta_{q,\mathrm{up}}^{K,L}\qedhere
\end{align*}
\end{proof}

We now only require one more lemma before proving~\autoref{persBetti}.

\begin{lemma}\label{schurKernel}
Let $L: V\to V$ be self-adjoint positive semi-definite and let $W\subseteq V$ be a subspace. Then, $\ker(\Sch(L, W)) = \proj_W(\ker(L))$.
\end{lemma}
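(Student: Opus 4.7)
The plan is to leverage Proposition~\ref{prop:schur-restriction} so that the Schur restriction can be concretely realized, and then identify both sides of the claimed equality by purely linear-algebraic manipulation. Since $L:V\to V$ is self-adjoint positive semi-definite, it admits a factorization $L = f\circ f^*$ for some linear map $f$ on a finite dimensional inner product space (for example, take $f = L^{1/2}$, in which case $f^*=f$). By Proposition~\ref{prop:schur-restriction}, we then have $\Sch(L,W) = f_W\circ f_W^*$, where $f_W:f^{-1}(W)\to W$ is the restriction of $f$. Consequently, using the standard identity $\ker(g\circ g^*)=\ker(g^*)$, the problem reduces to showing
\[
\ker(f_W^*) \;=\; \proj_W(\ker(f^*)),
\]
together with the observation $\ker(L)=\ker(f^*)$.

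To compute $f_W^*$ intrinsically, I would first note that for any $w\in W$ and $v\in f^{-1}(W)$, $\langle f_W^*(w),v\rangle = \langle w, f(v)\rangle = \langle f^*(w),v\rangle$, so $f_W^*=\proj_{f^{-1}(W)}\circ f^*|_W$. Hence $w\in\ker(f_W^*)$ iff $f^*(w)\in f^{-1}(W)^\perp$. The next key step is the identity $f^{-1}(W)^\perp = f^*(W^\perp)$, which follows from a direct adjointness computation: $x\in f^{-1}(W)$ iff $\langle f(x),w^\perp\rangle=0$ for every $w^\perp\in W^\perp$ iff $\langle x, f^*(w^\perp)\rangle=0$ for every such $w^\perp$. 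Combining these observations, $w\in\ker(f_W^*)$ iff there exists $w'\in W^\perp$ with $f^*(w)=f^*(w')$, i.e. iff $w\in\ker(f^*)+W^\perp$. Intersecting with $W$ yields $\ker(f_W^*) = W\cap(\ker(f^*)+W^\perp)$.

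Finally, I would verify the set equality $W\cap(\ker(f^*)+W^\perp) = \proj_W(\ker(f^*))$ by a short two-way argument: given $v\in\ker(f^*)$, decompose $v=v_W+v_{W^\perp}$ so that $\proj_W(v)=v_W\in W\cap(\ker(f^*)+W^\perp)$; conversely, if $w=v+w'$ with $v\in\ker(f^*)$ and $w'\in W^\perp$, applying $\proj_W$ gives $w=\proj_W(v)$. Together with $\ker(f^*)=\ker(L)$, this gives the desired equality $\ker(\Sch(L,W))=\proj_W(\ker(L))$.

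The main obstacle is the middle step: correctly identifying $\ker(f_W^*)$ as $W\cap(\ker(f^*)+W^\perp)$ hinges on the adjointness identity $f^{-1}(W)^\perp = f^*(W^\perp)$, which is the only substantive computation. Everything else is formal manipulation once one commits to representing $L$ as $f\circ f^*$ and invoking Proposition~\ref{prop:schur-restriction}. An alternative approach, should this abstract route prove awkward, is to work in a block basis adapted to $W\oplus W^\perp$, write $L=\begin{pmatrix}A & B\\ B^\mathrm{T} & D\end{pmatrix}$, use that $\Ima(B^\mathrm{T})\subseteq\Ima(D)$ for any PSD block matrix (so that $DD^\dagger B^\mathrm{T}=B^\mathrm{T}$), and then verify by direct substitution that $(A-BD^\dagger B^\mathrm{T})w=0$ iff $w$ is the top block of some vector in $\ker(L)$.
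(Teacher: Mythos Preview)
Your proof is correct but takes a genuinely different route from the paper. The paper works directly in a block matrix representation: writing $[L]=\begin{pmatrix}A & B\\ B^{\mathrm T} & D\end{pmatrix}$ with respect to orthonormal bases of $W$ and $W^\perp$, it checks both inclusions by hand, the key input being the PSD identity $(I-DD^\dagger)B^{\mathrm T}=0$ (so that $BD^\dagger D = B$). In particular, for $w\in\ker(\Sch(L,W))$ the paper explicitly exhibits the preimage $\begin{pmatrix}w\\ -D^\dagger B^{\mathrm T} w\end{pmatrix}\in\ker(L)$. Your argument instead bootstraps from Proposition~\ref{prop:schur-restriction}: factoring $L=f\circ f^*$, you reduce to showing $\ker(f_W^*)=\proj_W(\ker(f^*))$ via the adjointness identity $f^{-1}(W)^\perp=f^*(W^\perp)$, all coordinate-free. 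Your approach is cleaner and makes the lemma a formal consequence of the structural characterization of the Schur restriction already established; the paper's approach is more elementary and self-contained (it does not invoke Proposition~\ref{prop:schur-restriction}), and it has the minor advantage of producing an explicit lift from $\ker(\Sch(L,W))$ back to $\ker(L)$. The ``alternative approach'' you sketch at the end is in fact exactly what the paper does.
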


\begin{proof}
It suffices to prove this in a matrix representation by~\autoref{prop:well-defn}. Assume we have $[L] =\begin{pmatrix}
A & B \\
B^\mathrm{T} & D
\end{pmatrix}$ for some orthonormal bases for $W$ and $W^\perp$. Assume $\begin{pmatrix}
w \\
w^\perp
\end{pmatrix} \in \ker([L])$. Then, we have that $A w + B w^\perp = 0$ and $B^\mathrm{T} w + D w^\perp = 0$. From the second equality, one can get that $D^\dagger B^\mathrm{T} w = - D^\dagger D w^\perp$. Thus,
\[
(A - B D^\dagger B^\mathrm{T})w = A w - B (D^\dagger B^\mathrm{T} w) = A w + B D^\dagger D w^\perp = A w + B w^\perp = 0
\]
where the second last equality follows from~\cite[Theorem 16.1]{geomethods} by using the fact that $(I-DD^\dagger)B^\mathrm{T}=0$ and that $(D^\dagger)^\mathrm{T} = D^\dagger$. Thus, $\proj_W(\ker(L)) \subseteq \ker(\Sch(L,W))$.
For the other containment, if $w\in \ker(\Sch(L,W))$, one can check that $\begin{pmatrix}
w \\
-D^\dagger B^\mathrm{T} w
\end{pmatrix} \in \ker(L)$ by using the fact that $(I-DD^\dagger)B^\mathrm{T}=0$ again.
\end{proof}

\begin{proof}[Proof of~\autoref{persBetti}]
First, we need the following elementary linear algebra fact:

\begin{claim}[{\cite[Theorems 5.2 and 5.3]{hodgeLaplacianGraphs}}]\label{claim:lin-alg-fact}
Let $X, Y$ and $Z$ be finite dimensional inner product spaces and let $L_1: X\to Y$ and $L_2 : Y \to Z$ be linear maps. Assume that $L_2 L_1 = 0$. Then, 
\[ 
\ker(L_1 L_1^* + L_2^* L_2) = \ker(L_1^*) \cap \ker(L_2) \cong \ker(L_2) / \Ima(L_1).
\]
\end{claim}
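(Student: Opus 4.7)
My plan is to break the statement into two parts and handle them separately: first establish the equality $\ker(L_1 L_1^* + L_2^* L_2) = \ker(L_1^*) \cap \ker(L_2)$, then establish the isomorphism $\ker(L_1^*) \cap \ker(L_2) \cong \ker(L_2) / \Ima(L_1)$. Both parts should follow from standard Hilbert-space arguments, with no serious obstacles; the key input is the fact that $L_2 L_1 = 0$ implies $\Ima(L_1) \subseteq \ker(L_2)$, together with the orthogonal decomposition coming from the inner product on $Y$.

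For the first equality, the plan is to exploit positive semi-definiteness. The containment $\ker(L_1^*) \cap \ker(L_2) \subseteq \ker(L_1 L_1^* + L_2^* L_2)$ is immediate: if $L_1^*(y) = 0$ and $L_2(y) = 0$, then $(L_1 L_1^* + L_2^* L_2)(y) = L_1(0) + L_2^*(0) = 0$. For the reverse containment, I would take $y \in \ker(L_1 L_1^* + L_2^* L_2)$ and compute
\[
0 = \langle (L_1 L_1^* + L_2^* L_2)(y), y \rangle = \langle L_1^*(y), L_1^*(y) \rangle + \langle L_2(y), L_2(y) \rangle = \|L_1^*(y)\|^2 + \|L_2(y)\|^2,
\]
using the definition of adjoint. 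Since both norms are non-negative, both must vanish, giving $y \in \ker(L_1^*) \cap \ker(L_2)$.

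For the isomorphism, the plan is to use the orthogonal decomposition $Y = \Ima(L_1) \oplus \ker(L_1^*)$, which holds because $\Ima(L_1)^\perp = \ker(L_1^*)$ in finite-dimensional inner product spaces. Since $L_2 L_1 = 0$, we have $\Ima(L_1) \subseteq \ker(L_2)$, so intersecting the decomposition with $\ker(L_2)$ yields
\[
\ker(L_2) = \Ima(L_1) \oplus \bigl(\ker(L_1^*) \cap \ker(L_2)\bigr).
\]
The isomorphism $\ker(L_2)/\Ima(L_1) \cong \ker(L_1^*) \cap \ker(L_2)$ then follows by taking the quotient by $\Ima(L_1)$, or equivalently by noting that the canonical projection restricted to the orthogonal complement $\ker(L_1^*) \cap \ker(L_2)$ is a linear isomorphism onto the quotient.

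The main thing to be careful about is verifying that intersecting the direct sum decomposition with $\ker(L_2)$ is valid, i.e., that every $y \in \ker(L_2)$ decomposes as a sum of an element of $\Ima(L_1) \subseteq \ker(L_2)$ and an element of $\ker(L_1^*) \cap \ker(L_2)$. This reduces to checking that the $\ker(L_1^*)$-component of any $y \in \ker(L_2)$ is itself in $\ker(L_2)$, which is immediate since the $\Ima(L_1)$-component already lies in $\ker(L_2)$ and $\ker(L_2)$ is a subspace closed under subtraction. This is the only place where a small sanity check is needed, but it is not a real obstacle.
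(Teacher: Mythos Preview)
Your proof is correct and is the standard argument for this well-known Hodge-theoretic fact. The paper does not supply its own proof of this claim; it simply cites it from the literature (Theorems~5.2 and~5.3 of \cite{hodgeLaplacianGraphs}) and uses it as a black box inside the proof of \autoref{persBetti}, so there is nothing further to compare.
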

Let $X = \ChUp{L}{K}, Y = \Ima{(f_q)}, Z = \ChDown{K}{L}$, $L_1 = \partial_{q+1}^{L,K}$ and $L_2 = (\delta_{q-1}^{K,L})^* \circ \hat{f}_q^{-1}$. Observe that 
\[
\ker((\delta_{q-1}^{K,L})^*) = \ker(\delta_{q-1}^{K,L} \circ (\delta_{q-1}^{K,L})^*) = \proj_{\ker(f_q)^\perp} (\ker(\Delta_{q,\mathrm{down}}^{K})),
\]
where the last equality follows from~\autoref{schurKernel}, as we have that 
\[\delta_{q-1}^{K,L} \circ (\delta_{q-1}^{K,L})^* = \Sch(\Delta_{q,\mathrm{down}}^K, \ker(f_q)^\perp)\]
from~\autoref{prop:schur-restriction}. Moreover, $\ker(\Delta_{q,\mathrm{down}}^K) = \ker((\partial_q^K)^* \circ \partial_q^K) = \ker(\partial_q^K)$. Thus,  
\[
f_q(\ker(\partial_q^K)) = f_q(\proj_{\ker(f_q)^\perp} (\ker(\Delta_{q,\mathrm{down}}^{K}))) = f_q(\ker((\delta_{q-1}^{K,L})^*)) = \hat{f}_q (\ker((\delta_{q-1}^{K,L})^*)).
\]
Therefore, $\Ima(L_1) \subseteq f_q(\ker(\partial_q^K)) = \hat{f}_q(\ker((\delta_{q-1}^{K,L})^*)) = \ker((\delta_{q-1}^{K,L})^* \circ \hat{f}_q^{-1}) =  \ker(L_2)$. Thus, $L_2 L_1 =0$. Hence, by applying~\autoref{claim:lin-alg-fact}, we get
\begin{align*}
    \ker(\Delta_q^{K,L}) \cong&  \ker(L_2) / \Ima(L_1) \\
                         =& \ker(\Delta_{q,\mathrm{down}}^{K,L}) / \Ima{(\partial_{q+1}^{L,K})} \\
                         =& \ker(\hat{f}_q \circ \Sch(\Delta_{q,\mathrm{down}}^K, \ker(f_q)^\perp) \circ \hat{f}_q^{-1}) / \Ima{(\partial_{q+1}^{L,K})} \\
                         =& f_q(\ker(\Sch(\Delta_{q,\mathrm{down}}^K, \ker(f_q)^\perp))) / (\Ima(\partial_{q+1}^L) \cap f_q(\ker(\partial_q^K))) \\
                         =& f_q(\ker(\partial_q^K)) / (\Ima(\partial_{q+1}^L) \cap f_q(\ker(\partial_q^K))).
\end{align*}
where the last equality follows from~\autoref{schurKernel}.
Let $(f_q)_\# : H_q(K) \to H_q(L)$ be the induced map on the homology groups. Then, 
\begin{align*}
    \Ima((f_q)_\#) =& \big(f_q(\ker(\partial_q^K)) + \Ima(\partial_{q+1}^L)\big) / \Ima(\partial_{q+1}^L) \\
                 \cong& f_q(\ker(\partial_q^K)) / (\Ima(\partial_{q+1}^L) \cap f_q(\ker(\partial_q^K))) \\
                 \cong& \ker(\Delta_q^{K,L}).
\end{align*}
Hence, $\beta_q^{K,L} = \mathrm{nullity}(\Delta_q^{K,L})$.
\end{proof}

\section{Details for Section \ref{sec:matrix-and-algo}}
\begin{proof}[Proof of~\autoref{prop:downMatrix}]
The matrix representation of the map $f_q : \ker(f_q)^\perp \to \Ima(f_q)$ with respect to the bases $\mathcal{C}$ and the canonical basis of $\Ima(f_q)$ is given by $W_{\Ima(f_q)}$. Therefore, the matrix representation of $\Delta_{q,\mathrm{down}}^{K,L} = f_q \circ \Sch(\Delta_{q,\mathrm{down}}^K, \ker(f_q)^\perp) \circ (f_q)^{-1} $ is given by $W_{\Ima(f_q)} (X - Y T^\dagger Z) W_{\Ima(f_q)}^{-1}$.
\end{proof}

\begin{proof}[Proof of~\autoref{prop:upMatrix}]
By \autoref{prop:schur-restriction}, $\Delta_{q,\mathrm{up}}^{K,L} = \iota_{\Ima(f_q)} \circ \Sch(\Delta_{q,\mathrm{up}}^L, f_q(\ker(\partial_q^K))) \circ \proj_{f_q(\ker(\partial_q^K))}$, where $\iota_{\Ima(f_q)} : f_q(\ker(\partial_q^K)) \hookrightarrow \Ima(f_q)$ is the inclusion map and $\proj_{f_q(\ker(\partial_q^K))} : \Ima(f_q) \to \proj_{f_q(\ker(\partial_q^K))}$ is the projection map. Thus, this explains the matrix in the middle of~\autoref{eq:upLapMat}. The other two matrices in~\autoref{eq:upLapMat} are for changing the basis back to the canonical basis of $\Ima(f_q)$.
\end{proof}

\begin{proof}[Proof of~\autoref{lem:fq ker = ker lap}]
By~\autoref{schurKernel} and the fact that $\ker(\partial_q^K)= \ker(\Delta_{q,\mathrm{down}}^K)$, we have
\begin{align*}
    \ker(\Delta_{q,\mathrm{down}}^{K,L}) =& \ker(\hat{f}_q \circ \Sch(\Delta_{q,\mathrm{down}}^K, \ker(f_q)^\perp)\circ \hat{f}_q^{-1}) \\
                                =& \hat{f}_q (\ker(\Sch(\Delta_{q,\mathrm{down}}^K, \ker(f_q)^\perp) \\
                                =& f_q (\proj_{\ker(f_q)^\perp}(\ker(\Delta_{q,\mathrm{down}}^K)) \\
                                =& f_q (\proj_{\ker(f_q)^\perp}(\ker(\partial_q^K)) \\
                                =& f_q (\ker(\partial_q^K)).\qedhere
\end{align*}
\end{proof}

\section{Details for Section \ref{sec:mono}}

To prove~\autoref{prop:surj-mon}, we need the following lemma.
\begin{lemma}\label{lem:extension}
Let $f: U \to V$ be a linear map between inner product spaces $U$ and $V$. Assume that $U$ is a subspace of an inner product space $\hat{U}$ and that $f$ extends to a map $\hat{f}: \hat{U} \to V$. Then, $\hat{f} \circ \hat{f}^* \succeq f \circ f^*$.
\end{lemma}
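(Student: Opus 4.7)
The plan is to reduce the inequality to the elementary fact that an orthogonal projection is a self-adjoint contraction (i.e., $\preceq$ the identity) in Loewner order. Concretely, I would express $f$ as the composition $f=\hat f\circ\iota$, where $\iota\colon U\hookrightarrow\hat U$ is the isometric inclusion; this is exactly the assumption that $\hat f$ extends $f$. Taking adjoints yields $f^{\ast}=\iota^{\ast}\circ\hat f^{\ast}$, and $P:=\iota\iota^{\ast}\colon\hat U\to\hat U$ is the orthogonal projection of $\hat U$ onto $U$.

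Next, I would rewrite the left-hand-minus-right-hand of the desired inequality using this factorization:
\[
f\circ f^{\ast}=\hat f\circ(\iota\iota^{\ast})\circ\hat f^{\ast}=\hat f\circ P\circ\hat f^{\ast}.
\]
Since $P$ is an orthogonal projection on $\hat U$, we have $\mathrm{id}_{\hat U}-P\succeq 0$. For any $v\in V$, setting $w=\hat f^{\ast}(v)\in\hat U$, this gives
\[
\langle (\hat f\hat f^{\ast}-ff^{\ast})(v),v\rangle=\langle(\mathrm{id}_{\hat U}-P)(w),w\rangle=\|w\|^{2}-\|Pw\|^{2}\geq 0,
\]
so $\hat f\hat f^{\ast}\succeq ff^{\ast}$, which is the conclusion.

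There is essentially no obstacle here: the only point worth double-checking is that $\iota^{\ast}\colon\hat U\to U$ is indeed the orthogonal projection in the sense that $\iota\iota^{\ast}$ is the projector onto $U\subseteq\hat U$, which is a standard consequence of the definition of the adjoint of an isometric inclusion between inner product spaces. Everything else is a one-line inner product calculation.
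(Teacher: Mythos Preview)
Your proof is correct and essentially the same as the paper's: the paper writes $\hat f = f \oplus f^{\perp}$ on $\hat U = U \oplus U^{\perp}$ and observes $\hat f\hat f^{\ast} - f f^{\ast} = f^{\perp}(f^{\perp})^{\ast} \succeq 0$, which is exactly your $\hat f(\mathrm{id}_{\hat U}-P)\hat f^{\ast}$ since $\mathrm{id}_{\hat U}-P$ is the orthogonal projection onto $U^{\perp}$.
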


\begin{proof}
Let $U^\perp$ be the orthogonal complement of $U$ inside $\hat{U}$. Then, $\hat{f} = f \oplus f^\perp$ where $f^\perp$ maps $U^\perp$ into V. Then, $\hat{f}\circ \hat{f}^* = f\circ f^* + f^\perp \circ (f^\perp)*$. Thus, $\hat{f}\circ \hat{f}^* - f\circ f^* = f^\perp \circ (f^\perp)^* \succeq 0$.
\end{proof}

\begin{proof}[Proof of~\autoref{prop:surj-mon}]
As $f$ and $g$ are surjective and weight preserving, we observe that $f_q(c_q^{\tau, g\circ f}) = c_q^{\tau, g}$ for every $[\tau] \in \mathcal{S}_q^M$. This implies that $\tilde{f}_q := (f_q)|_{\ker(g_q\circ f_q)^\perp} : \ker(g_q \circ f_q)^\perp \to \ker(g_q)^\perp$ is an isometric isomorphism and $(\tilde{f}_q)^{-1} = (\tilde{f}_q)^* = (f_q^*)|_{\ker(g_q)^\perp}$.

Now, let $c \in \ChDown{L}{M}$. Then, $(\partial_q^K)^* (f_{q-1})^* (c) = (f_q)^*(\partial_{q-1}^L)^* (c) = (\tilde{f}_1)^{-1} (\partial_{q-1}^L)^* (c) \in \ker(g_q\circ f_q)^\perp$. Thus, $(f_{q-1})^*(c) \in \ChDown{K}{M}$. Therefore, $(f_{q-1})^*|_{\ChDown{L}{M}} : {\ChDown{L}{M}} \hookrightarrow \ChDown{K}{M}$ is an isometric embedding as $(f_{q-1})^*$ itself is an isometric embedding by surjectivity and weight preserving property of $f$. Then, we have the following commutative diagram.
\begin{center}

\begin{tikzcd}
\ker(g_q\circ f_q)^\perp \arrow[dd, "\tilde{f}_q"', bend right]                     &  & \ChDown{K}{M} \arrow[ll, "{\delta_{q-1}^{K,M}}"]                                               \\
                                                                                    &  &                                                                                                \\
\ker(g_q)^\perp \arrow[uu, "\tilde{f}_q^{-1}"', bend right] \arrow[d, "\hat{g}_q"'] &  & \ChDown{L}{M} \arrow[uu, "(f_q^*)|_{\ChDown{L}{M}}"', hook] \arrow[ll, "{\delta_{q-1}^{L,M}}"] \\
C_q^M                                                                               &  &                                                                                               
\end{tikzcd}
\end{center}
That is, $\hat{g}_q \circ \delta_{q-1}^{L,M} = \hat{g}_q \circ \tilde{f}_q \circ \delta_{q-1}^{K,M} \circ (f_q^*)|_{\ChDown{L}{M}}$. In other words, $\hat{g}_q \circ \delta_{q-1}^{L,M} : \ChDown{L}{M} \to C_q^M$ extends to $\hat{g}_q \circ \tilde{f}_q \circ \delta_{q-1}^{K,M} = \widehat{(g\circ f)}_q \circ \delta_{q-1}^{K,M} : \ChDown{K}{M} \to C_q^M$. Then, by~\autoref{lem:extension}, we get that $\Delta_{q,\mathrm{down}}^{K,M} \succeq \Delta_{q,\mathrm{down}}^{L,M}$.
\end{proof}

\begin{proof}[Proof of~\autoref{thm:down-mono}]
Observe that  $\Delta_{q,\mathrm{down}}^{K,M} = \Delta_{q,\mathrm{down}}^{K, \Ima(g \circ f)} \succeq \Delta_{q,\mathrm{down}}^{\Ima(f), \Ima(g\circ f)}$  by~\autoref{rmk:image} and~\autoref{prop:surj-mon}. The assumption that $f$, $g$ and $g \circ f$ are weight preserving guarantees that if $[\tau] \in \mathcal{S}_q^L$, then $g_q([\tau]) \notin \Ima(g_q \circ f_q)$. This implies that the matrix representation of $\Delta_{q,\mathrm{down}}^{\Ima(f), \Ima(g\circ f)}$ will be a principal submatrix of $\Delta_{q,\mathrm{down}}^{L, M}$ 

Thus, $\Delta_{q,\mathrm{down}}^{\Ima(f), \Ima(g\circ f)}$ has larger eigenvalues than $\Delta_{q,\mathrm{down}}^{L, M}$ by Eigenvalue Interlacing Theorem~\cite[Theorem 1]{eval-interlace}. 
Hence,
\[
\lambda_{q,\mathrm{down},k}^{K,M} = \lambda_{q,\mathrm{down},k}^{K, \Ima(g \circ f)} \geq \lambda_{q,\mathrm{down},k}^{\Ima(f), \Ima(g\circ f)} \geq \lambda_{q,\mathrm{down},k}^{L, M}.
\]

To prove the other inequality, namely $\lambda_{q,\mathrm{down}, k}^{K,M} \geq \lambda_{q,\mathrm{down}, k}^{K,L}$, we will use the alternative definition of down persistent Laplacian as described in~\autoref{rem:alternative-defn}, namely $\delta_{q-1}^{K,M} \circ (\delta_{q-1}^{K,M})^*$ and $\delta_{q-1}^{K,L} \circ (\delta_{q-1}^{K,L})^*$, as they have the same spectrum as the originally defined $\Delta_{q,\mathrm{down}}^{K,M}$ and $\Delta_{q,\mathrm{down}}^{K,L}$ respectively. By~\autoref{prop:schur-restriction}, 
\begin{align*}
    \delta_{q-1}^{K,M} \circ (\delta_{q-1}^{K,M})^* =& \Sch(\Delta_{q,\mathrm{down}}^K, \ker(g_q \circ f_q)^\perp), \\ 
    \delta_{q-1}^{K,L} \circ (\delta_{q-1}^{K,L})^* =& \Sch(\Delta_{q,\mathrm{down}}^K, \ker(f_q)^\perp).
\end{align*}

As $\ker(g_q \circ f_q)^\perp \subseteq \ker(f_q)^\perp$, it follows that $\Sch(\Delta_{q,\mathrm{down}}^K, \ker(g_q \circ f_q)^\perp)$ has larger eigenvalues than $\Sch(\Delta_{q,\mathrm{down}}^K, \ker(f_q)^\perp)$ by Eigenvalue Interlacing Theorem for Schur complements~\cite[Lemma 4.5]{memoli2022persistent}. Hence, $\lambda_{q,\mathrm{down}, k}^{K,M} \geq \lambda_{q,\mathrm{down}, k}^{K,L}$.
\end{proof}

\begin{proof}[Proof of~\autoref{thm:mono-up-ess}]
The eigenvalues in the statement of the theorem correspond to the operators $\Sch(\Delta_{q,\mathrm{up}}^M, g_q(f_q(\ker(\partial_q^K))))$ and $\Sch(\Delta_{q,\mathrm{up}}^M, g_q(\ker(\partial_q^L)))$. We then observe that $g_q(f_q(\ker(\partial_q^K))) \subseteq g_q(\ker(\partial_q^L))$. Thus, by eigenvalue interlacing property of Schur complement~\cite[Lemma 4.5]{memoli2022persistent}, we obtain that $\Sch(\Delta_{q,\mathrm{up}}^M, g_q(f_q(\ker(\partial_q^K))))$ has larger eigenvalues than $\Sch(\Delta_{q,\mathrm{up}}^M, g_q(\ker(\partial_q^L)))$. Thus, $\lambda_{q,\mathrm{up},k}^{K,M,\mathrm{ess}}\geq \lambda_{q,\mathrm{up},k}^{L,M,\mathrm{ess}}$.
\end{proof}

\begin{proof}[A direct proof of~\autoref{thm:up-mono-inc} via~\autoref{thm:mono-up-ess}]
Assume $ K\hookrightarrow L \hookrightarrow M$ are inclusions. We will only prove that $\lambda_{q,\mathrm{up},k}^{K,M} \geq \lambda_{q,\mathrm{up},k}^{L,M}$. Assume that $\dim(\ker(\partial_q^K))=n$. Let $m = n_q^L - n_q^K$. Then, $\dim(\ker(\partial_q^L)) \leq n+m$. Then $\Delta_{q,\mathrm{up}}^{K,M}$ has $n_q^K - n$ essential $0$ eigenvalues, and $\Delta_{q,\mathrm{up}}^{L,M}$ has $n_q^L - \dim(\ker(\partial_q^L))$. Then
\[
n_q^K - n = n_q^L-n-m \leq n_q^L - \dim(\ker(\partial_q^L))
\]
Thus, $\Delta_{q,\mathrm{up}}^{L,M}$ has more essential $0$ eigenvalues than $\Delta_{q,\mathrm{up}}^{K,M}$. Combining this with~\autoref{thm:mono-up-ess}, we conclude that $\lambda_{q,\mathrm{up},k}^{K,M} \geq \lambda_{q,\mathrm{up},k}^{L,M}$. 
\end{proof}

\begin{figure}[H]
    \centering
    \includegraphics[scale=1]{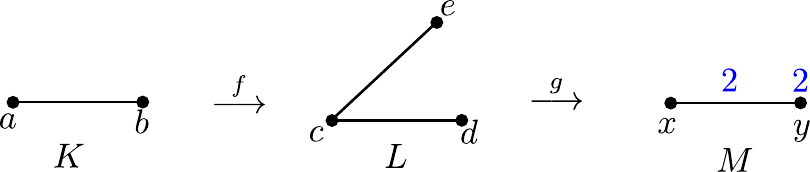}
    \caption{Example of two weight preserving simplicial where their composition is not. Weights on $K$ and $L$ are all $1$. The vertex $x$ has weight $1$, the edge $xy$ and the vertex $y$ have weight $2$. $f$ is given by $a\mapsto c$, $b\mapsto d$. $g$ is given by $c\mapsto x$, $e\mapsto y$, $d\mapsto y$}.
    \label{fig:ex4}
\end{figure}

\begin{example}\label{ex:compositionNotWP}
For the simplicial maps $f$ and $g$ depicted in~\autoref{fig:ex4}, we have that $f$ is weight preserving as it is inclusion and the simplicial complexes $K$ and $L$ has weights all $1$. The map $g$ is weight preserving because $cd$ and $ce$ both have weight $1$ and they are mapped to $xy$, which has weight $2$. Similarly, the vertices $e$ and $d$ have weight $1$ and they collapsed to the vertex $y$ which has weight $2$. However, the composition $g\circ f$ is inclusion and it maps the edge $ab$, which has weight $1$, to the edge $xy$, which has weight $2$. Thus, $g\circ f$ is not weight preserving.
\end{example}

\end{document}